\newcommand{\C}{\mathbb{C}}     
\newcommand{\R}{\mathbb{R}}     
\newcommand{\N}{\mathbb{N}}     
\newcommand{\ambSp}[1]{\mathcal{F}_{#1}} 
\newcommand{\xx}{\textbf{x}}    
\newcommand{\zz}{\textbf{z}}    
\newcommand{\dd}{\mathbf{d}}    
\newcommand{\ww}{\mathbf{w}}    
\newcommand{\e}{\mathbf{e}}
\DeclareMathOperator{\Sym}{Sym}
\DeclarePairedDelimiter\floor{\lfloor}{\rfloor} 
\newtheorem{theorem}{Theorem}[section]
\newtheorem{proposition}[theorem]{Proposition}
\newtheorem{lemma}[theorem]{Lemma}
\newtheorem{conjecture}[theorem]{Conjecture}
\newtheorem{problem}[theorem]{Problem}
\newtheoremstyle{nonitalic}
  {3pt} 
  {3pt} 
  {} 
  {} 
  {\bfseries} 
  {.} 
  { } 
  {} 
\theoremstyle{nonitalic}
\newtheorem{example}[theorem]{Example}
\newtheorem{remark}[theorem]{Remark}
\newtheorem{definition}[theorem]{Definition}
\begin{document}

\title{Algebraic geometry of rational neural networks}
\author[Alexandros Grosdos]{Alexandros Grosdos}
\address{Institute of Mathematics, University of Augsburg, Universitätsstrasse 14, 86159 Augsburg}
\email{alexandros.grosdos@uni-a.de}
\author{Elina Robeva}
\address{Department of Mathematics, The University of British Columbia, 1984 Mathematics Road, Vancouver, BC, Canada, V6T 1Z2}
\email{erobeva@math.ubc.ca}
\author{Maksym Zubkov}
\address{Department of Mathematics, The University of British Columbia, 1984 Mathematics Road, Vancouver, BC, Canada, V6T 1Z2}
\email{mzubkov@math.ubc.ca}

\subjclass[2020]{68T07, 14M12, 41A20, 62R01}
\keywords{neuromanifold, 
    neurovariety, 
    rational neural network, 
    network expressivity,  
    tensor decomposition}

\date{\today}

\begin{abstract}
    We study the expressivity of rational neural networks (RationalNets) through the lens of algebraic geometry. 
    We consider rational functions that arise from a given RationalNet to be tuples of fractions of homogeneous polynomials of fixed degrees. 
    For a given architecture, the neuromanifold is the set of all such expressible tuples. 
    For RationalNets  with one hidden layer 
    and fixed activation function $\sigma(x)=1/x$,
    we characterize the dimension of the neuromanifold and provide defining equations for some architectures. 
    We also propose algorithms that determine whether a given rational function belongs to the neuromanifold. For deep binary RationalNets, i.e., RationalNets all of whose layers except potentially for the last one are binary, we classify when the Zarisky closure of the neuromanifold equals the whole ambient space, and give bounds on its dimensions.

\end{abstract}
\maketitle

\section{Introduction}

Neural networks are parameterized families of functions
with an enormous range of applications in the sciences
due to their versatility as universal function approximators.
When the activation function is polynomial, the corresponding function space, 
also known as the neuromanifold, 
can be studied algebraically
as it can be naturally described by polynomial equations and inequalities. 
Recent work in this area \cite{KUBJAS24} has provided remarkable insight into the geometry of these networks, 
allowing us to better understand 
the class of representable functions,
as well as the loss landscape.
More broadly, a conceptual dictionary relating the algebro-geometric invariants of varieties to fundamental notions in the theory of neural netowrks has been proposed in~\cite{MARCHETTI25}.

Due to the inherent limitations of polynomials
in approximation theory \cite{AUSTIN21}, rational activation functions have
become widely used by both theorists and practitioners
\cite{BOULLE22,ALEJANDRO19}, and
RationalNets have already shown superior performance in function approximation \cite{BOULLE20}.

In this work, we present the first algebro-geometric study of the neuromanifold and its Zariski closure, called the {\em neurovariety}, associated with a given rational neural network.

We use computational algebraic geometry
and tensor decompositions 
to understand the space of representable functions (the neuromanifold) by a given network architecture
and to reconstruct the corresponding parameters of the neural network.

Our smallest motivating example involves a neural network with $2$ layers as follows.

\begin{example}
\label{example:motivaitonal-example}
Consider the 2-layer neural network pictured below. Given weight
     matrices $W_1 = (a_{ij}) \in \R^{2\times 2}$, $W_2 = (b_{ij}) \in \R^{1\times 2}$ with nonzero rows, 
    define the corresponding linear functions $\alpha_1$ and $\alpha_2$,
    and let $\sigma$ be the rational activation function that acts entrywise as $x \mapsto 1/x$.
    This $2$-layer rational neural network then gives rise to all functions 
    $\R^2 \to \R$ of the form
    \[f_{\ww}(\xx) = (\alpha_2 \circ \sigma \circ \alpha_1)(\xx)=
    \frac{(b_1a_{21}+b_2a_{11})x_1+(b_1a_{22}+b_2a_{12})x_2}{(a_{11}x_1+a_{12}x_2)(a_{21}x_1+a_{22}x_2)}.
    \]
    The function space consists of all functions of the above form, 
    namely all functions that can be written as a quotient 
    \begin{equation}
    \label{eq:221-architecture-rat-funs-for}
        \frac{C_{10}x_1+C_{01}x_2}{C_{20}x_1^2+C_{11}x_1x_2+C_{02}x_2^2} \in \mathbb{R}(x_1,x_2)
    \end{equation}
    with the restriction that the denominator factors as a product of two real linear forms. In other words, the inequality $C_{11}^2-4C_{20}C_{02} \geq 0$ holds.
\end{example}

\subsection{Previous Work}

A comprehensive overview of the notion of expressivity of neural networks can be found in~\cite{GUHRING20}.

The most common approach to training a neural network is stochastic gradient descent with respect to a given \textit{loss function} \cite{BOTTOU10}. The goal is to minimize this loss function over the space of parameters $\ww\in\R^N$. Alternatively, the training process can be viewed as an optimization over the function space~\cite{MARCHETTI25}. 

One of the earliest works~\cite{AMARI94} to study the geometric properties of the function space of a neural network also proposed the term {\em neuromanifold} to denote the function space of a given neural network architecture.

Studying the training process as an optimization problem over the neuromanifold has seen recent progress from the applied algebra and geometry community in cases when the activation function is either polynomial or ReLU. The role of depth in the expressive power of ReLU neural networks was studied in~\cite{HANIN19}. The connection between ReLU neural networks and tropical geometry was first established in~\cite{ZHANG18}. The expressivity of PNNs was studied in \cite{KILEEL19} through the lens of filling and thick architectures. The geometry of neuromanifolds and neurovarieties was explored in \cite{KUBJAS24}, and the expressive power of PNNs  was further examined in~\cite{FINKEL25}. The identifiability of PNNs was studied in~\cite{USEVICH25}, while the comprehensive study of singularities of PNNs was provided in~\cite{SHAHVERDI25-1}. For polynomial convolutional neural networks, the expressive power of architectures and questions related to geometry and optimization have been investigated in~\cite{SHAHVERDI25-2}.

One drawback of using rational activation functions is the possibility of encountering singularities, as the denominator may become zero. One of the ways to solve this issue is to use Padé approximation units \cite{BREZINSKI96, ALEJANDRO19}, which help keep the denominator away from zero and make RationalNets a universal approximator. In classical rational function approximation theory, rational approximations have shown significantly better performance than polynomial approximations~\cite{BORWEIN83, NEWMAN78}.

Having a discontinuity in the activation function is not uncommon. For instance, the JumpReLU activation function which contains a jump discontinuity has been trained and studied in \cite{ERICHSON19} where it is shown to increase the model robustness against adversarial attacks. 

To achieve more flexibility and accuracy during training, the coefficients of the rational activation functions for each layer can also be treated as trainable parameters alongside the network weights~\cite{BOULLE20}. This also allows for the use of significantly fewer parameters in RationalNets than in neural networks with ReLU activation functions. Such networks have been successfully applied to learn Green's functions of linear partial differential equations using physically informed neural networks \cite{BOULLE22}.

An alternative approach to studying the expressivity of RationalNets is through Taylor varieties \cite{CONCA23}, where one considers the formal Taylor expansion of a rational function. General rational approximation of multivariate functions is discussed in \cite{AUSTIN21}. We leave this approach to future work.

Lastly, the function space of shallow single-output RationalNets with activation function $\sigma(x) = 1/x$ is related to the space of meromorphic functions in several variables with linear poles \cite{DAHMEN24} which arises in quantum field theory from Feyman integrals \cite{CLAVIER19}, in number theory from multi zeta functions \cite{MANCHON10}, and algebraic geometry from Jeffrey-Kirwan residue \cite{JEFFREY97}.

\subsection{Structure of the Paper and Main Contributions}

The rest of the paper is organized as follows.

In Section \ref{section:preliminaries}, we define rational neural networks and show that they can be written as vectors of polynomial ratios.
The degrees of these polynomials are then  computed recursively.
We identify the neurovariety, 
i.e., the Zariski closure of the set of representable functions,
with a variety in the ambient space of tuples of polynomials of fixed degree.

In Section~\ref{section:shallow-networks} we study shallow neural networks from the point of view of tensor decompositions.
Algebraic techniques allow us to decompose the symmetric tensors induced by the denominators of elements in the function space of the neural network using two different methods.

In Section \ref{sec:ideals} we study shallow neural networks. We give a full description of filling architectures, characterize the neurovarieties for architectures with two neurons in the middle layer, 
and provide a partial characterization when the middle layer is larger.

In Section \ref{section:binaryNN} we discuss deep binary rational neural networks,
whose expressivity is particularly appealing for applications.
We provide a description of the neural network as a function of its depth and study algebraic aspects, 
like its dimension and whether the neuromanifold/neurovariety fills the whole ambient space or not. 

In Section~\ref{sec:numerics}, we numerically compute the dimension of the neurovariety and provide an example of a rational neural network with activation function $\sigma(x) = 1/x$ learning meromorphic functions from data. 

Finally, in Section~\ref{sec:conclusion}, we provide the summary of the paper and some possible future directions.

The source code used in this paper is available at \url{https://github.com/maxzubkov/rationalnets}

\section{Preliminaries}
\label{section:preliminaries}

In this section, we collect the necessary background material on neural networks and introduce RationalNets along with their corresponding parameter map. 
In Section~\ref{subsec:2-neural-networks-and-neeuromanifolds}, we present a general definition of a feedforward neural network and discuss
the ambient space, the parameter map, and the neuromanifold. 
In Section~\ref{subsec:2-shape-of-rational-neural-networks}, we define RationalNets with activation function  $1/x$ and describe their closed-form expression. In Section~\ref{subsec:2-combinatorial-param-map}, we construct a {``combinatorial''} parameter map, introduce the associated neurovariety, and list the architectures and main questions that we aim to study.

\subsection{Neural networks and neuromanifolds}
\label{subsec:2-neural-networks-and-neeuromanifolds}
Let $\dd=(d_0,d_1,\dots,d_L)$ be an $L$-tuple of natural numbers. A \textit{feedforward neural network} $f_{\ww}:\R^{d_0}\to\R^{d_L}$ is a composition of affine-linear maps $\alpha_{i}:\R^{d_{i-1}}\to\R^{d_i}$ and non-linear maps $\sigma_i:\R^{d_i}\to\R^{d_i}$ given by
\[
    f_{\ww}(\xx)\coloneqq (\alpha_L\circ\sigma_{L-1}\circ \alpha_{L-1}\circ\dots\circ\sigma_1\circ \alpha_1)(\xx).
\]
The vector $\ww=(W_1\dots,W_L,b_1,\dots,b_L)$ is the \textit{parameter vector} of the neural network, where $W_i\in\R^{d_{i-1}\times d_i}$ and $b_i\in\R^{d_{i}}$, and the affine linear maps $\alpha_i$ are given by $\alpha_i(\xx)=W_i\xx+b_i$. The matrices $W_1,\dots,W_L$ and the vectors $b_1,\dots,b_L$ are called \textit{weights} and \textit{biases} of the neural network, respectively. 

The map $\sigma_i:\R^{d_i}\to\R^{d_i}$ is called an \textit{activation} map and is here the coordinate-wise application of a \textit{activation function} $\sigma:\R\to\R$. The \textit{architecture} of the neural network is the pair $(\dd,\sigma)$.

Depending on the choice of activation function $\sigma$, the image of the neural network $f_{\ww}$ belongs to a different \textit{ambient space}. 
For example, if $\sigma:\R\to\R$ is any continuous function, 
then the neural network $f_{\ww}$ belongs to the space of continuous functions from $\R^{d_0}$ to $\R^{d_L}$. 
Let $\ambSp{}(\R^{d_0},\R^{d_L})$ be the space of all functions from $\R^{d_0}$ to $\R^{d_L}$, and let $\R^N$ be the space of all neural network parameters, where $N$ is the total number of all weights and biases. For an arbitrary choice of $\sigma$, we have that $f_{\ww}\in \ambSp{}(\R^{d_0},\R^{d_L})$.  

We can further define the \emph{parameter map}, which we denote by $\Psi_{\dd, \sigma}$, that takes an element $\ww\in\R^N$ and maps it to an element in the ambient space $\ambSp{}(\R^{d_0}, \R^{d_L})$ as follows:
\begin{equation}
\label{eq:parameter-map-general}
    \Psi_{\dd,\sigma}:\R^{N}\to \ambSp{}(\R^{d_0},\R^{d_L}),\hspace{0.2cm} \ww\mapsto f_{\ww}.    
\end{equation}
For different choices of parameters $\ww$, we obtain different points $f_{\ww}$ in the image of the map $\Psi_{\dd,\sigma}$. So, all possible functions $f_{\ww}$ that a fixed neural network architecture can express within the ambient space $\ambSp{}(\R^{d_0},\R^{d_L})$ are the image of the parameter map $\Psi_{\dd,\sigma}$. 

\begin{definition}
    The image of the map $\Psi_{\dd,\sigma}$ is called the \textit{neuromanifold} $\mathcal{M}_{\dd,\sigma}$.
\end{definition}

In this work, when we talk about the \textit{expressive power},
we follow the notion given in \cite{KILEEL19} where it is defined as the ability of the neural network to exactly learn a given function.

 Depending on the choice of activation function $\sigma$, the neural network $f_{\ww}$ can have very specific properties. This allows us to shrink  the ambient space $\ambSp{}(\mathbb R^{d_0}, \mathbb R^{d_L})$ to a smaller subspace $\mathcal F_\sigma(\mathbb R^{d_0}, \mathbb R^{d_L})$ and study the neuromanifold $\mathcal M_{\dd, \sigma}$ within this new subspace. For example, if 
 the activation function is $\sigma(x)=\text{ReLU}(x)$, then we can pick  $\mathcal F_\sigma(\mathbb R^{d_0}, \mathbb R^{d_L})$ as our ambient space, namely
the space of piecewise linear functions from $\mathbb R^{d_0}$ to $\mathbb R^{d_L}$ \cite{HANIN19}.

In the case of a polynomial activation function $\sigma$, the ambient space $\mathcal F_{\sigma}(\R^{d_0},\R^{d_L})$ can be chosen to be finite-dimensional. Indeed, the output function $f_\ww$ is always a tuple of polynomials of given bounded degree, and the space of such polynomials is a finite-dimensional vector space \cite{KILEEL19}. In comparison, any {continuous} non-polynomial activation function yields an infinite-dimensional ambient space $\mathcal F_\sigma(\mathbb R^{d_0}, \mathbb R^{d_L})$ despite the fact that the neuromanifold $\mathcal{M}_{\dd,\sigma}$ is a finite-dimensional embedding of the parameter space $\R^N$ in $\mathcal{F}_{\sigma}(\R^{d_0},\R^{d_L})$ \cite{LESHNO93}. This infinite-dimensional setting limits the applicability of tools from algebraic geometry, which traditionally require embeddings into finite-dimensional projective spaces. 

This issue arises in particular when we have a rational activation function $\sigma$. In this case, the output function $f_{\ww}$ is a $d_L$-tuple of rational functions, and, therefore, it lies in the  infinite-dimensional space of rational functions.

In order to work in a finite-dimensional setting, we represent the output of a
fixed rational neural network architecture as a tuple of rational functions whose numerators and denominators have bounded degrees.
We then treat each rational function $P/Q$ as a point $(P,Q)$ in a space parametrized by the coefficients of the numerator $P$ and the denominator $Q$. However, note that the map
\[
    P/Q\mapsto(P,Q) 
\]
is not well-defined, since distinct pairs can represent the same rational function. For example, $\frac{x}{xy}$ and $\frac{y}{y^2}$ both simplify to the same rational function $\frac{1}{y}$. One way to resove this issue is to remove the points where the resultant $Res(P,Q)$ vanishes, as discussed in \cite{SILVERMAN07}. In this work however, each output function has uniquely defined numerator and denominator arising from the neural network parametrization, allowing us to easily avoid this issue.

In the next subsection, we study the explicit form of the numerator and denominator polynomials when the activation function is  $\sigma(x)=1/x$.

\subsection{Rational neural networks}
\label{subsec:2-shape-of-rational-neural-networks}

A \textit{rational neural network (RationalNet)}  $f_{\ww}:\R^{d_0}\to\R^{d_L}$, with architecture $(\dd,\sigma)$, is the composition of functions 
\[
    \R^{d_0}\xrightarrow{W_1} \R^{d_1}\xrightarrow{\sigma}\R^{d_1}\xrightarrow{W_2} \dots \xrightarrow{\sigma} \R^{d_{L-1}} \xrightarrow{W_{L}} \R^{d_L},
\]
where $W_{k}\in \R^{d_k\times d_{k-1}}$ are linear maps and $\sigma$ acts coordinate-wise as $x\mapsto1/x$.
We use $W_k$ for both the matrix and the linear function it induces. We denote the $(i,j)$-th entry of the matrix $W_k$ by $w_{kij}$.
More precisely, the output of the network is the function
\begin{equation}
\label{eq:rnn-main-equation}
    f_{\ww}(\xx) = (W_L\circ\sigma\circ W_{L-1}\circ\dots\circ\sigma\circ W_1)(\xx).
\end{equation}

In our study of neural networks we think of the entries of the matrices $W_k$ as variables in a polynomial ring. Since we are primarily interested in the algegbraic and geometric properties of the neurovariety, we avoid a discussion  of the domain where each function $f_{\ww}$ is defined.

We denote the $i$th component of $f_{\ww}$ by $f_{i,\ww}$ and observe that it is a rational function of the form $P_{i,\ww} / Q_{i,\ww}$. In fact, we show that all denominators $ Q_{i,\ww}$ are equal, and we denote them by $Q_{\ww}$.

Let $S^{d}(\R^{n})$ be the space of homogeneous polynomials of degree $d$ over $n$ variables with coefficients in $\R$, and let $n(\dd)$ and $m(\dd)$ be the degrees of the numerators $P_{i,\ww}$ and the denominator $Q_{\ww}$, respectively.  Then, for the numerators we have $P_{i,\ww} \in S^{n(\dd)}(\R^{d_0})$ and for the denominator we have $Q_{\ww} \in S^{m(\dd)}(\R^{d_0})$ for all $i = 1, \ldots, d_L$, and we compute the degress $n(\dd)$ and $m(\dd)$ in Lemma~\ref{lemma:total-degree}. 

In the remainder of this subsection, we express the neural network via a recursive formula and study the symmetries of the fibers of the map $\Psi_{\dd,\sigma}(\ww)=f_{\ww}$.

\begin{theorem}
\label{theorem:neural-network-closed-form-general}
    Consider the neural network $f_{\ww}:\R^{d_0}\to\R^{d_L}$ with dimensions $\dd=(d_0,d_1,\dots,d_L)$ where $d_i\geq2$ for $i=0,\dots,L-1$, weight matrices $W_k\in\mathbb{R}^{d_k\times d_{k-1}}$, and activation function $\sigma(x)$ acting entrywise $x \mapsto 1/x$. 
    Then the neural network output $f_{\ww}$ is a $d_L$-tuple of rational functions
    \[
        f_{\ww}(\xx)=\left(\frac{P_{1,\ww}(\xx)}{Q_{\ww}(\xx)},\dots,\frac{P_{d_L,\ww}(\xx)}{Q_{\ww}(\xx)}\right)^\top,
    \]
    where $P_{i,\ww}$ and $Q_{\ww}$ are homogeneous polynomials which factorize in the form
    \begin{equation}
    \label{eq:RNN-coordinates-formulas}
    \begin{aligned}
        P_{i,\ww}(\xx) &= p^{(L)}_{i}(\xx)q^{(L-1)}(x)q^{(L-3)}(\xx)\dots q^{(\delta(L+1))}(\xx),\\
        Q_{\ww}(\xx) &= q^{(L)}(\xx)q^{(L-2)}(\xx)\dots q^{(\delta(L))}(\xx),
    \end{aligned}
    \end{equation}
    where $\delta(\cdot)$ denotes the parity ($\delta(L)=0$ if $L$ is even, $\delta(L)=1$ if $L$ is odd) and the polynomials $p^{(k)}_{i}$ and $q^{(k)}$ are computed recursively as follows.
    
    \begin{itemize}
        
    \item We initialize:
    \[
        p^{(1)}_{i}(\xx)\coloneqq\sum_{j=1}^{d_0}w_{1,i,j}x_j,\quad q^{(k)}(\xx)\coloneqq 1 \text{ for }k=0,1.
    \]
    
    \item For $k\geq 1$, we define:
    \begin{equation}
    \label{eq:main-recursion-formula}
        p^{(k+1)}_{i}(\xx)\coloneqq\sum_{j=1}^{d_k}w_{k+1,i,j}\prod_{\substack{s=1\\ s\neq j}}^{d_k}p^{(k)}_{s}(\xx),\quad
        q^{(k+1)}(\xx)\coloneqq\prod_{j=1}^{d_k} p^{(k)}_{j}(\xx).  
    \end{equation}
    
    \end{itemize}
\end{theorem}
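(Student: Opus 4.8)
The plan is to prove the two identities by induction, carried out not on $f_{\ww}$ directly but on all the intermediate outputs of the network, which is the form in which the recursion \eqref{eq:main-recursion-formula} naturally lives. Set $z^{(1)}\coloneqq W_1\xx$ and $z^{(k)}\coloneqq W_k\,\sigma\bigl(z^{(k-1)}\bigr)$ for $2\le k\le L$, so that $z^{(L)}=f_{\ww}$, and prove that for every $1\le k\le L$ and every coordinate $i$
\[
z^{(k)}_i \;=\; \frac{p^{(k)}_i(\xx)\,q^{(k-1)}(\xx)\,q^{(k-3)}(\xx)\cdots q^{(\delta(k+1))}(\xx)}{q^{(k)}(\xx)\,q^{(k-2)}(\xx)\cdots q^{(\delta(k))}(\xx)},
\]
with $p^{(k)}_i$, $q^{(k)}$ as defined in the statement and the convention $q^{(0)}=q^{(1)}=1$. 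Specializing to $k=L$ gives exactly \eqref{eq:RNN-coordinates-formulas}, and since the denominator above is independent of $i$ this simultaneously proves that all output coordinates share a common denominator $Q_{\ww}$.

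For $k=1$ the claim is the initialization itself: $z^{(1)}_i=\sum_j w_{1,i,j}x_j=p^{(1)}_i(\xx)$, while the numerator and denominator products collapse to $q^{(0)}=1$ and $q^{(1)}=1$. For the inductive step, applying $\sigma$ coordinatewise to the formula for $k$ simply swaps its numerator and denominator products, so
\[
z^{(k+1)}_i \;=\; \sum_{\ell=1}^{d_k} w_{k+1,i,\ell}\,\sigma\bigl(z^{(k)}\bigr)_\ell \;=\; \frac{q^{(k)}q^{(k-2)}\cdots}{q^{(k-1)}q^{(k-3)}\cdots}\;\sum_{\ell=1}^{d_k}\frac{w_{k+1,i,\ell}}{p^{(k)}_\ell}.
\]
Combining the last sum over the common denominator $\prod_{\ell=1}^{d_k}p^{(k)}_\ell=q^{(k+1)}$ rewrites it as $p^{(k+1)}_i/q^{(k+1)}$, which is precisely the content of \eqref{eq:main-recursion-formula}, and multiplying the two fractions gives $p^{(k+1)}_i\,q^{(k)}q^{(k-2)}\cdots$ divided by $q^{(k+1)}q^{(k-1)}q^{(k-3)}\cdots$. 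Re-reading the two ``staircases'' of indices $\{k,k-2,\dots\}$ and $\{k{+}1,k{-}1,\dots\}$ as $\{(k{+}1){-}1,(k{+}1){-}3,\dots\}$ and $\{(k{+}1),(k{+}1){-}2,\dots\}$, and invoking $q^{(0)}=q^{(1)}=1$ so that both products run down to $q^{(\delta(k+2))}$ and $q^{(\delta(k+1))}$, reproduces the asserted formula with $k$ replaced by $k+1$. The same induction also shows all $p^{(k)}_i$ and $q^{(k)}$ are homogeneous, since sums and products of homogeneous polynomials of equal degree stay homogeneous; the precise degrees $n(\dd)$ and $m(\dd)$ of the numerators and denominator are then computed separately in Lemma~\ref{lemma:total-degree}.

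The step I expect to need the most care is the index bookkeeping in the inductive step: one has to verify that applying $\sigma$ (which interchanges the two staircases of $q$'s) and then adjoining the new factor $q^{(k+1)}$ to the denominator shifts both index sets compatibly, across the even and odd cases for $k$. Phrasing the products as terminating at $q^{(\delta(\cdot))}$ together with the normalization $q^{(0)}=q^{(1)}=1$ is exactly what makes this uniform rather than a case distinction on the parity of $k$. The hypothesis $d_i\ge2$ for $i<L$ is used only so that the products $\prod_{s\ne j}p^{(k)}_s$ in \eqref{eq:main-recursion-formula} are nonempty, i.e. so that ``the numerator of $\sum_\ell w_{k+1,i,\ell}/p^{(k)}_\ell$ after clearing denominators'' really equals $p^{(k+1)}_i$; otherwise the argument is a formal identity of rational functions, with the domains of definition of the $f_{\ww}$ deliberately left aside as in the preliminaries.
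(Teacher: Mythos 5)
Your proof is correct and follows essentially the same strategy as the paper's: induction on the layer index with an intermediate claim for the per-layer outputs $z^{(k)}_i$ (the paper calls these $f_k$), the same base case, and the same inductive step that applies $\sigma$ to flip the two staircase products of $q^{(j)}$'s and then clears the common denominator $q^{(k+1)}=\prod_j p^{(k)}_j$ when multiplying by $W_{k+1}$. The only inaccuracy is a peripheral remark at the end: the hypothesis $d_i\ge 2$ is not needed for the products $\prod_{s\ne j}p^{(k)}_s$ to make sense (an empty product is $1$), but rather, as the paper explains after the theorem, to prevent the degrees from stalling and spurious common factors from appearing between $P_{i,\ww}$ and $Q_{\ww}$.
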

\begin{proof}
    See Appendix~\ref{apx:neural-network-closed-form-general}
\end{proof}

We require all hidden layers and the input of $f_{\ww}$ to have dimension at least two. This is because if layer $k$ has dimension one, i.e., the network architecture contains a segment $(d_{k-1},1,d_{k+1})$, then the degrees $n(\dd)$ and $m(\dd)$ stop growing after layer $k$. 

Indeed, consider the architecture $\dd=(1,d_1,d_2)$, then the network output is equal to
\[
    t\xrightarrow{W_1} \begin{bmatrix}
        w_{111}t\\
        \vdots\\
        w_{1d_11}t
    \end{bmatrix}\xrightarrow{\sigma}\begin{bmatrix}
        1/(w_{111}t)\\
        \vdots\\
        1/(w_{1d_11}t)
    \end{bmatrix}\xrightarrow{W_2}\begin{bmatrix}
        \sum_{j=1}^{d_1}\frac{w_{21j}}{w_{1j1}t}\\
        \vdots\\
        \sum_{j=1}^{d_1}\frac{w_{2d_2j}}{w_{1j1}t}
    \end{bmatrix}=\begin{bmatrix}
        \frac{\sum_{j=1}^{d_1}w_{21j}\prod_{s\neq j,s=1}^{d_1}w_{1s1}}{t\prod_{s=1}^{d_1}w_{1s1}}\\
        \vdots\\
        \frac{\sum_{j=1}^{d_1}w_{2d_2j}\prod_{s\neq j,s=1}^{d_1}w_{1s1}}{t\prod_{s=1}^{d_1}w_{1s1}}
    \end{bmatrix}
\]
After canceling the common factor $t$, neither the numerator nor the denominator increase in degree. Therefore, any hidden width of dimension $1$ stops the growth of $n(\dd)$ and $m(\dd)$.

To illustrate the recursion formula~\eqref{eq:main-recursion-formula}, we take a look at the output of the architecture $\dd=(3,3,1)$.
\begin{example}
    Let $p_{i}^{(1)}$ be the $i$th component of the vector $W_1\xx$ and set $q^{(1)}\coloneqq1$. Then we have
    \[
    \begin{bmatrix}
        x_1\\
        x_2\\
        x_3
    \end{bmatrix}\xrightarrow{W_1} \begin{bmatrix}
        p_{1}^{(1)}/{q^{(1)}}\\
        p_{2}^{(1)}/{q^{(1)}}\\
        p_{3}^{(1)}/{q^{(1)}}
    \end{bmatrix}\xrightarrow{\sigma}\begin{bmatrix}
        q^{(1)}/{p_{1}^{(1)}}\\
        q^{(1)}/{p_{2}^{(1)}}\\
        q^{(1)}/{p_{3}^{(1)}}
    \end{bmatrix}\xrightarrow{W_2}
    \frac{q^{(1)}}{{p_{1}^{(1)}p_{2}^{(1)}p_{3}^{(1)}}}\begin{bmatrix}
        w_{111}p_{2}^{(1)}p_{3}^{(1)}+w_{112}p_{1}^{(1)}p_{3}^{(1)}+w_{113}p_{1}^{(1)}p_{2}^{(1)}\\
        w_{121}p_{2}^{(1)}p_{3}^{(1)}+w_{122}p_{1}^{(1)}p_{3}^{(1)}+w_{123}p_{1}^{(1)}p_{2}^{(1)}\\
       w_{131}p_{2}^{(1)}p_{3}^{(1)}+w_{132}p_{1}^{(1)}p_{3}^{(1)}+w_{133}p_{1}^{(1)}p_{2}^{(1)}
    \end{bmatrix}.
    \]
    Hence, we can set 
    \[
        p^{(2)}_{i}=w_{1i1}p_{2}^{(1)}p_{3}^{(1)}+w_{1i2}p_{1}^{(1)}p_{3}^{(1)}+w_{1i3}p_{1}^{(1)}p_{2}^{(1)},\quad q^{(2)}={p_{1}^{(1)}p_{2}^{(1)}p_{3}^{(1)}}
    \]
    and iterate the same procedure for the deeper layers.
\end{example}

In Theorem~\ref{theorem:neural-network-closed-form-general} we expressed $P_{i,\ww}$ and $Q_{\ww}$ recursively for general architectures. Providing a
closed-form formula for $P_{i,\ww}$ and $Q_{\ww}$ however is notation-heavy and cumbersome, and left as a challenge for upcoming work.
We partially address this challenge in Lemma~\ref{lemma:closed-tensor-form-shallow-network} and Proposition~\ref{prop:closed-form-binary-deep-network}, where we provide explicit closed-form expressions for shallow and binary deep rational neural networks, respectively.

Next, we compute the dimension of the ambient space by determining the degrees $n(\dd)$ and $m(\dd)$ of $P_{i, \ww}$ and $Q_{\ww}$. First, we need to compute the degrees of the polynomials $p_i^{(k)}$ and $q^{(k)}$.

\begin{lemma}
\label{lemma:degrees-for-p-and-q}
    Let $p^{(k)}_i$ and $q^{(k)}$ be defined recursively as in equation~\eqref{eq:main-recursion-formula}, where the dimensions $d_j \geq 2$ for $j=0,\dots,L-1$. Then, the degree of $p^{(k)}_i$ is
    \[
        \deg(p^{(k)}_i) = \prod_{j=1}^{k-1} (d_j - 1) = (d_1 - 1)\cdots(d_{k-2} - 1)(d_{k-1} - 1),
    \]
    and the degree of $q^{(k)}$ is
    \[
        \deg(q^{(k)}) = d_{k-1} \prod_{j=1}^{k-2} (d_j - 1) = d_{k-1}(d_1 - 1)\cdots(d_{k-3} - 1)(d_{k-2} - 1).
    \]
\end{lemma}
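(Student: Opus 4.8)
The plan is a single induction on $k$ that carries three assertions simultaneously: for every $k\ge 1$, (a) the polynomials $p^{(k)}_1,\dots,p^{(k)}_{d_k}$ are nonzero homogeneous polynomials sharing one common degree, denoted $D_k$; (b) $D_k=\prod_{j=1}^{k-1}(d_j-1)$, with empty products read as $1$; and (c) for every $k\ge 2$, $q^{(k)}$ is homogeneous of degree $d_{k-1}\prod_{j=1}^{k-2}(d_j-1)$ (for $k=0,1$ one has $q^{(k)}=1$ by the initialization, so there is nothing to check). Granting (a)--(c), the two displayed formulas are exactly (b) and (c).

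For the base case $k=1$, each $p^{(1)}_i(\xx)=\sum_{j=1}^{d_0}w_{1,i,j}x_j$ is a nonzero linear form, so (a)--(b) hold with $D_1=1=\prod_{j=1}^{0}(d_j-1)$; the first instance of (c) is $k=2$, where $q^{(2)}=\prod_{j=1}^{d_1}p^{(1)}_j$ is a product of $d_1$ nonzero linear forms and hence homogeneous of degree $d_1=d_1\prod_{j=1}^{0}(d_j-1)$. For the inductive step, assume (a)--(c) at index $k$. Each summand of $p^{(k+1)}_i(\xx)=\sum_{j=1}^{d_k}w_{k+1,i,j}\prod_{s=1,\,s\ne j}^{d_k}p^{(k)}_s(\xx)$ is a product of exactly $d_k-1$ of the $p^{(k)}_s$, hence by (a)--(b) a nonzero homogeneous polynomial of degree $(d_k-1)D_k$ in $\xx$, a number independent of $i$ and of $j$. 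Since the variables $w_{k+1,i,1},\dots,w_{k+1,i,d_k}$ occur in none of the $p^{(k)}_s$, this equality presents $p^{(k+1)}_i$ as a polynomial in those $d_k$ variables whose coefficients are the nonzero polynomials $\prod_{s\ne j}p^{(k)}_s$; therefore $p^{(k+1)}_i\ne 0$ and is homogeneous in $\xx$ of degree $D_{k+1}\coloneqq(d_k-1)D_k=\prod_{j=1}^{k}(d_j-1)$, independent of $i$, which gives (a)--(b) at $k+1$. Finally $q^{(k+1)}=\prod_{j=1}^{d_k}p^{(k)}_j$ is a product of $d_k$ homogeneous polynomials of degree $D_k$, hence homogeneous of degree $d_kD_k=d_k\prod_{j=1}^{k-1}(d_j-1)$, which is (c) at $k+1$.

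The computation is essentially bookkeeping, so there is no genuine obstacle. The two points that deserve attention are, first, maintaining the uniformity clause (a) throughout the induction — it is precisely this that lets one read off $\deg\bigl(\prod_{s\ne j}p^{(k)}_s\bigr)=(d_k-1)D_k$ — and, second, the observation that, because the weight entries are treated as indeterminates, the $d_k$ summands in $p^{(k+1)}_i$ cannot cancel one another, so the values computed are exact degrees rather than mere upper bounds. The running hypothesis $d_j\ge 2$ for $j=0,\dots,L-1$ is exactly what guarantees $D_k\ge 1$ for all $k\le L$; a hidden layer of width one would collapse the corresponding $p^{(k)}$ to constants and halt the growth of the degrees, as noted after Theorem~\ref{theorem:neural-network-closed-form-general}.
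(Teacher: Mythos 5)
Your proof is correct and follows the same essential route as the paper's: both set up the degree recursion $n_{k+1}=(d_k-1)n_k$, $m_{k+1}=d_k n_k$ from the recursive definitions of $p^{(k+1)}_i$ and $q^{(k+1)}$ and unroll it to the base case. You are somewhat more careful on one point — the paper cites $\deg(f+g)=\max\{\deg(f),\deg(g)\}$ without comment, while you explicitly observe that the fresh weight indeterminates $w_{k+1,i,j}$ do not appear in the $p^{(k)}_s$, so the linear combination cannot vanish and the degree is exact rather than an upper bound — but this is a refinement of the same argument, not a different one.
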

\begin{proof}
    See Appendix~\ref{apx:degrees-for-p-and-q}
\end{proof}

\begin{lemma}
\label{lemma:total-degree}
    Let $\dd=(d_0,d_1,\dots,d_L)$ with $d_i\geq 2$ for $i=0,1,\dots,L-1$ and $\sigma(x)=1/x$. Let $P_{i,\ww}$ and $Q_{\ww}$ be defined in \eqref{eq:RNN-coordinates-formulas}. Then the degrees of $P_{i,\ww}$ and $Q_{\ww}$ are given by
    \[
        \deg(P_{i,\ww})= \prod_{j=1}^{L-1} \left( d_{j} - 1 \right)+\sum_{k=1}^{\floor{\frac{L}{2}}+1}d_{L-2k}\prod_{j=1}^{L-2k-1} \left( d_{j} - 1 \right),
    \]
    \[
        \deg(Q_{\ww}) = \sum_{k=1}^{\left\lfloor \frac{L}{2} \right\rfloor + 1} d_{L - 2k + 1} \prod_{j=1}^{L-2k} \left( d_{j} - 1 \right).
    \]
\end{lemma}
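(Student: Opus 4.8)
The plan is to read the two formulas off directly from the factorizations~\eqref{eq:RNN-coordinates-formulas} of Theorem~\ref{theorem:neural-network-closed-form-general} together with the per-factor degrees supplied by Lemma~\ref{lemma:degrees-for-p-and-q}. By~\eqref{eq:RNN-coordinates-formulas}, $P_{i,\ww}$ is the product of the single factor $p^{(L)}_i$ with the factors $q^{(L-1)},q^{(L-3)},\dots,q^{(\delta(L+1))}$, while $Q_{\ww}$ is the product of $q^{(L)},q^{(L-2)},\dots,q^{(\delta(L))}$. Viewed as elements of $\R[\ww][\xx]$ these are all nonzero and homogeneous in $\xx$, and $\R[\ww][\xx]$ is a domain, so $\deg_{\xx}$ is additive over the factors; hence
\[
\deg(P_{i,\ww}) = \deg\bigl(p^{(L)}_i\bigr) + \sum_{r\ge 0,\; L-1-2r\ge \delta(L+1)} \deg\bigl(q^{(L-1-2r)}\bigr), \qquad \deg(Q_{\ww}) = \sum_{r\ge 0,\; L-2r\ge \delta(L)} \deg\bigl(q^{(L-2r)}\bigr).
\]

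The next step is to substitute the two expressions of Lemma~\ref{lemma:degrees-for-p-and-q}. The term $\deg(p^{(L)}_i)=\prod_{j=1}^{L-1}(d_j-1)$ is exactly the leading summand of the claimed expression for $\deg(P_{i,\ww})$. For the $q$-factors in the numerator I reindex by $k=r+1$, so the superscript becomes $m=L-2k+1$ and $\deg(q^{(L-2k+1)})=d_{L-2k}\prod_{j=1}^{L-2k-1}(d_j-1)$, which is precisely the summand appearing in the statement; as $r$ runs over the admissible nonnegative integers, $k$ runs over $1,\dots,\lceil L/2\rceil$. Replacing this upper limit by $\lfloor L/2\rfloor+1$ makes no difference: the only indices thereby affected correspond to the boundary factors $q^{(0)},q^{(1)}$, which equal $1$ and contribute degree $0$ — consistently, under the convention that a product $\prod_{j=1}^{t}(d_j-1)$ with $t<0$ is read as $0$, the extra summand(s) vanish. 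The denominator is handled identically with $k=r+1$ and $m=L-2k+2$, which turns $\deg(q^{(m)})$ into $d_{L-2k+1}\prod_{j=1}^{L-2k}(d_j-1)$, with $k$ now ranging over $1,\dots,\lfloor L/2\rfloor+1$ and the final summand again accounting for the trivial factor $q^{(\delta(L))}$.

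The computation is routine; the one place to be careful is the index bookkeeping — keeping the parities $\delta(L)$ and $\delta(L+1)$ straight and shifting the two summation variables consistently so that both upper limits land on $\lfloor L/2\rfloor+1$ and the boundary factors $q^{(0)},q^{(1)}$ drop out cleanly. Before committing to the final write-up I would also verify the small cases $L=1$ (where $P_{i,\ww}=p^{(1)}_i$ has degree $1=\prod_{j=1}^{0}(d_j-1)$ and $Q_{\ww}=1$) and $L=2,3$ against the closed forms as a sanity check.
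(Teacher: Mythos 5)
Your proposal is correct and follows the same route the paper takes: additivity of degree over the factorization~\eqref{eq:RNN-coordinates-formulas} combined with the per-factor degree formulas from Lemma~\ref{lemma:degrees-for-p-and-q}, reindexed to match the stated sums. The paper's proof is just a one-line pointer to this computation, so yours is essentially an expanded version of it. One genuinely useful thing you add: you make explicit the convention that a product $\prod_{j=1}^{t}(d_j-1)$ with $t<0$ must be read as $0$ (not as the usual empty product $1$) so that the terms corresponding to the trivial boundary factors $q^{(0)}=q^{(1)}=1$ contribute degree zero and the upper limit $\lfloor L/2\rfloor+1$ does not produce spurious summands like $d_0$ when $L$ is even; the statement as printed is off without some such convention, and the paper's proof passes over this silently.
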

\begin{proof}
    See Appendix~\ref{apx:total-degree}
\end{proof}

Finally, if $\R(x_1,\dots,x_n)$ denotes the space of rational functions in $n$ variables, then the next lemma computes some of the symmetries of the fibers of the parameter map $\Psi_{\dd,\sigma}:\R^N\to(\R(x_1,\dots,x_{d_0}))^{d_L}$. In the case of a monomial activation function, the symmetries were computed in~\cite{KILEEL19,KUBJAS24}.

\begin{lemma}
\label{lemma:fiber-symmetries-general}
    Let $\dd = (d_0, d_1, \dots, d_L)$ and $\sigma$ be the entrywise $x \mapsto 1/x$. Suppose that for each $1 \leq i \leq L - 1$, $D_i$ is a diagonal matrix of size $d_i \times d_i$ and $P_i$ is any $d_i \times d_i$ permutation matrix. Then the parameter map $\Psi_{\dd,\sigma}$ is invariant under the transformations
    \begin{align*}
        W_1 &\leftarrow P_1 D_1 W_1 \\
        W_2 &\leftarrow P_2 D_2 W_2 D_1 P_1^T \\
        &\ \vdots \\
        W_L &\leftarrow W_L D_{L-1} P_{L-1}^T.
    \end{align*}
    Consequently, the dimension of a generic preimage of $\Psi_{\dd,\sigma}$ is at least $\sum_{k=1}^{L-1} d_k$.
\end{lemma}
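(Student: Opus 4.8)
The plan is to reduce the statement to two elementary equivariance properties of the reciprocal activation $\sigma$ and then propagate a ``monomial'' change of coordinates through the layers by a telescoping induction; the dimension bound will then follow from an orbit count. Throughout I take each $D_i$ to be \emph{invertible} (no zero diagonal entry), so that the transformed weights are defined and $\sigma$ is applied only away from its poles, and ``invariance'' is understood as an identity of rational maps.

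First I would record the two facts I need. Since $\sigma$ acts coordinatewise by $x\mapsto 1/x$, it commutes with every coordinate permutation, $\sigma\circ P = P\circ\sigma$, and it sends scaling by an invertible diagonal matrix to scaling by its inverse, $\sigma\circ D = D^{-1}\circ\sigma$. Composing these gives, for any monomial matrix $PD$, the identity $\sigma\circ(PD) = (PD^{-1})\circ\sigma$ on vectors with nonzero coordinates. This single identity drives the whole argument.

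Next I would set up the telescoping. Write $F_k = W_k\circ\sigma\circ W_{k-1}\circ\cdots\circ\sigma\circ W_1$ for $1\le k\le L-1$, and let $F_k'$ be the same composition formed from the transformed weights $W_1' = P_1 D_1 W_1$ and $W_k' = P_k D_k W_k D_{k-1} P_{k-1}^{T}$ ($2\le k\le L-1$). The core claim is that $F_k' = P_k D_k F_k$ for all $k$, proved by induction: the base case is the definition of $W_1'$, and in the inductive step the hypothesis $F_{k-1}' = P_{k-1}D_{k-1}F_{k-1}$ together with the identity above turns $\sigma\circ F_{k-1}'$ into $(P_{k-1}D_{k-1}^{-1})(\sigma\circ F_{k-1})$, after which $P_{k-1}^{T}P_{k-1} = I$ and commutativity of diagonal matrices collapse
\[
    W_k'\,(P_{k-1}D_{k-1}^{-1}) = P_k D_k W_k\bigl(D_{k-1}P_{k-1}^{T}P_{k-1}D_{k-1}^{-1}\bigr) = P_k D_k W_k .
\]
Applying the same cancellation one last time to $W_L' = W_L D_{L-1}P_{L-1}^{T}$ gives $f_{\ww'} = W_L'\circ\sigma\circ F_{L-1}' = W_L\circ\sigma\circ F_{L-1} = f_{\ww}$, which is the asserted invariance. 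This part is essentially forced once the equivariance identity is in hand; the only care needed is bookkeeping with the monomial matrices.

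Finally, for the dimension bound I would restrict to the transformations with all $P_i = I$, which form a torus $G\cong\prod_{i=1}^{L-1}(\R^{\times})^{d_i}$ of dimension $\sum_{i=1}^{L-1}d_i$ acting algebraically on the parameter space $\R^N$, with every orbit contained in one fiber of $\Psi_{\dd,\sigma}$ by the invariance just proved. Since orbit dimension equals $\dim G$ minus the dimension of the stabilizer, it suffices to show the $G$-stabilizer of a generic $\ww$ is trivial: the stabilizer equations $D_i W_i D_{i-1} = W_i$ for $1\le i\le L$ (with the convention $D_0 = D_L = I$), read off in order starting from $i=1$, force $D_1 = I$, then $D_2 = I$, and so on, provided each $W_i$ has no zero row — a condition that fails only on a proper Zariski-closed subset of $\R^N$. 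Hence for generic $\ww$ the fiber through $\ww$ contains a $\bigl(\sum_{i=1}^{L-1}d_i\bigr)$-dimensional orbit, giving the stated lower bound. I expect the main obstacle to be purely presentational: keeping the monomial-matrix indices straight and stating cleanly that the invariance holds generically (away from the poles of $\sigma$ and from weights with a zero row); the underlying algebra offers no real resistance.
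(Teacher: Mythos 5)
Your proof is correct and takes essentially the same route as the paper: both arguments rest on the two equivariance properties $\sigma\circ P = P\circ\sigma$ and $\sigma\circ D = D^{-1}\circ\sigma$, propagated layer by layer (the paper by a direct coordinate computation, you by a telescoping induction on the partial compositions $F_k$), and these are equivalent presentations of the same cancellation. One small bonus of your write-up is that you actually justify the final dimension bound by computing a generic stabilizer of the torus action, whereas the paper dispatches that step with a bare ``Consequently''; your stabilizer argument (reading $D_iW_i = W_i$ in order and using no zero rows) is correct and fills that gap cleanly.
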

\begin{proof}
    See Appendix~\ref{apx:fiber-symmetries-general}
\end{proof}

Since the ambient space $\R(x_1,\dots,x_{d_0})$ is infinite dimensional, we fix the architecture $\dd$ and the corresponding degrees $n(\dd),m(\dd)$ (see Lemma~\ref{lemma:total-degree}) and define a \emph{combinatorial} parameter map that sends $\ww$ to the coefficients of the numerators $P_{i,\ww}$ and the denominator $Q_{\ww}$.

\subsection{The combinatorial parameter map}
\label{subsec:2-combinatorial-param-map}

We are interested in studying all possible tuples of rational functions that can be represented by the rational neural network $f_{\ww}$. Since all $f_{i,\ww}$'s share the same common denominator $Q_{\ww}$ (see Proposition \ref{theorem:neural-network-closed-form-general}), we take 
\[
    (S^{n(\dd)}(\R^{d_0}))^{d_L}\times S^{m(\dd)}(\R^{d_0})    
\]
as the ambient space. We identify the \emph{neuromanifold} $\mathcal{M}_{\dd,\sigma}$ with the image of the {\em combinatorial} parameter map
\begin{equation}
\label{eq:parameter-map-combinatorial-general}
    \Psi_{\dd,\sigma}:\R^{N}\to (S^{n(\dd)}(\R^{d_0}))^{d_L}\times S^{m(\dd)}(\R^{d_0}),\hspace{0.2cm} \ww\mapsto (P_{1,\ww},\dots,P_{d_L,\ww,},Q_{\ww}),
\end{equation}
where $N=\sum_{i=0}^{L-1}d_i\cdot d_{i+1}$ is the total number of parameters in $\ww$. This map is well-defined: if the polynomials $P_{1,\ww},\dots,P_{d_L,\ww}$ and $Q_{\ww}$ have a common factor, we do not cancel it out. We conjecture that such cancellations happen on a measure-zero subset of $\R^N$.

To use the tools of algebraic geometry, we introduce the main algebraic object of study.
\begin{definition}
    The \textit{neurovariety} $\mathcal{V}_{\dd,\sigma}$ is the Zariski closure of the neuromanifold $\mathcal{M}_{\dd,\sigma}$ in the space $(S^{n(\dd)}(\R^{d_0}))^{d_L}\times S^{m(\dd)}(\R^{d_0})$.
\end{definition} 
Since the map $\Psi_{\dd,\sigma}$ is polynomial, then the neuromanifold $\mathcal{M}_{\dd,\sigma}$ is a \emph{semialgebraic} set by the Tarski–Seidenberg theorem. The neurovariety $\mathcal{V}_{\dd,\sigma}$ being the Zariski closure of $\mathcal{M}_{\dd,\sigma}$ is an irreducible \emph{algebraic variety} \cite{KILEEL19}. In other words, $\mathcal{M}_{\dd,\sigma}$ can expressed as a finite union of subsets defined by polynomial equalities and inequalities, whereas $\mathcal{V}_{\dd,\sigma}$ is defined only by polynomials.

One of the central questions in algebraic machine learning \cite{KILEEL19,KUBJAS24} is whether the neurovariety associated with a given architecture fills the entire ambient space.
\begin{definition}
  The neurovariety $\mathcal{V}_{\dd,\sigma}$ is called \emph{filling} if it is equal to the ambient space, i.e., \[
      \mathcal{V}_{\dd,\sigma}
      \;=\;
      \left(S^{n(\dd)}(\mathbb{R}^{d_0})\right)^{d_L} \times S^{m(\dd)}(\mathbb{R}^{d_0}).
  \]
\end{definition} 

\begin{remark}
    We say that the architecture $\dd$ is \emph{filling} when its corresponding neurovariety is filling.
\end{remark}

In general, the neuromanifold may be much smaller than the neurovariety. If the neuromanifold is not equal to the ambient space, but its neurovariety fills the space, then the neuromanifold is called \textit{thick}. If the neuromanifold itself fills the ambient space, it is called \emph{filling}.

In contrast to polynomial neural networks \cite{KILEEL19,KUBJAS24}, changing the hidden dimensions $d_i$ in RationalNets alters the degree of the numerator and denominator of the output $f_{i,\ww}$. Therefore, we focus on the expressivity of a \emph{fixed} neural-network architecture $\dd=(d_0,d_1,\dots,d_L)$ via its combinatorial map.

A necessary condition for the neurovariety to be filling is that the network has enough parameters to cover the entire ambient space. In general, the number of parameters $N = \sum_{i=0}^{L-1} d_i d_{i+1}$ is much smaller than the dimension of the ambient space,
\begin{equation}
\label{eq:dim-ambient-space-formula}
    \dim\left(\left(S^{n(\dd)}(\R^{d_0})\right)^{d_L} \times S^{m(\dd)}(\R^{d_0})\right)
    = d_L \binom{d_0 + n(\dd)-1}{n(\dd)} + \binom{d_0 + m(\dd)-1}{m(\dd)},
\end{equation}
for a general architecture $\dd$. 

To determine which neurovarieties are filling, we must first identify the architectures for which the number of parameters exceeds the dimension of the ambient space. 

\begin{problem}
    Determine all architectures where the number of parameters exceeds the ambient dimension. Classify all filling architectures.
\end{problem}

Throughout this paper, we restrict our attention to two families of RationalNets
\begin{enumerate}
    \item \emph{Shallow networks} with architecture $\dd=(n,m,k)$ in Section \ref{section:shallow-networks} and Section \ref{sec:ideals}, and
    \item \emph{Deep binary networks} with architectures $\dd=(2,2,\dots,2,k)$ in Section \ref{section:binaryNN}. 
\end{enumerate}

For each family, we study the neurovariety $\mathcal{V}_{\dd,\sigma}$. Specifically, for both shallow networks and deep binary neural networks, we address the following questions
\begin{enumerate}
    \item \textbf{The membership problem.} 
        Characterize the rational functions that can be represented by a fixed architecture of the network $(\dd,\sigma)$ {and how to reconstruct the parameters}.
    \item \textbf{Filling architectures characterization.} 
        Identify all architectures $\dd$ for which the neurovariety $\mathcal{V}_{\dd,\sigma}$ is filling. Furthermore, show that there are no filling neuromanifolds $\mathcal{M}_{\dd,\sigma}$.
    \item \textbf{Model description.}  
         Provide algebraic description $\mathcal{V}_{\dd,\sigma}$ for several architectures.
\end{enumerate}

\section{Shallow neural networks}\label{section:shallow-networks}

The first family of neural networks we consider is the family of \emph{shallow neural networks}. 

\begin{definition}
    A \textit{shallow neural network} is a neural network with one hidden layer. We write the dimension vector of the  architecture as $\dd=(n,m,k)$.
\end{definition}

In Section \ref{sec:3-closed-form-expression}, we provide a closed-form expression of shalow networks and their connection with tensor decomposition.
In Sections \ref{sec:3-reconstructing-ww} and \ref{sec:3-algebraic-tensor-decomp} we provide two different methods for determining whether a function belongs to the neuromanifold corresponding to a shallow neural network.

\subsection{Closed Form Expression}
\label{sec:3-closed-form-expression}

For the remainder of this section, we denote the entries of $W_1$ and $W_2$ by $a_{ij}$ and $b_{ij}$, respectively. 
Let $\xx\in\R^n$, and define $\ell_i$ to be the linear form given by the $i$th coordinate of $W_1\xx$, i.e.,  {$\ell_i = (W_1\xx)_i= \sum_{j=1}^n a_{ij} x_j$}. For each $j=1,\dots,m$, we define
\[
    \hat{\ell}_{j,m} = \ell_1\dots \ell_{j-1}\ell_{j+1}\dots \ell_m.
\]
In other words, $\hat{\ell}_{j,m}$ is the product of all $\ell_i$ except for $\ell_j$. 

According to Theorem~\ref{theorem:neural-network-closed-form-general}, the $i$th entry of the output of $f_{\ww}(\xx)$ is 
\begin{equation}
\label{eq:nmk-general-output}
    f_{i,\ww}(\xx) = \frac{P_{i,\ww}(\xx)}{Q_{\ww}(\xx)}=\frac{b_{i1} \hat{\ell}_{1,m}+\dots+b_{im} \hat{\ell}_{m,m}}{\ell_1\ell_2\dots \ell_m}.
\end{equation}

Therefore, a point $(P_1,\dots,P_k,Q)$ belongs to the neuromanfold $\mathcal{M}_{\dd,\sigma}$ if and only if there exist parameters $\ww\in\R^N$ such that $P_i$ and $Q$ admit the decomposition above. 

This characterization can also be expressed in the language of tensors (see \cite{KOLDA2009} for more details on tensors).
Let $e_i$ be the $i$-th standard basis vector in $\mathbb{R}^m$, and let $\Sym(e_1 \otimes e_2 \otimes \dots \otimes e_m)$ be the symmetric tensor obtained by symmetrizing the rank-one tensor $e_1 \otimes e_2 \otimes \dots \otimes e_m$. In other words, 
\[
\Sym(e_1 \otimes e_2 \otimes \dots \otimes e_m) =\frac1{m!} \sum_{\pi}e_{\pi(1)} \otimes e_{\pi(2)} \otimes\cdots\otimes e_{\pi(m)},
\]
where $\pi$ ranges  over all permutations on $\{1,\ldots, m\}$.
We also define 
\[
    \hat{e}_{j,n} = e_1 \otimes \dots \otimes e_{j-1} \otimes e_{j+1} \otimes \dots \otimes e_n    
\]
to be the tensor formed by omitting the $j$-th factor in the outer product. 

If $T$ is a symmetric tensor of order $m$, then the contraction along all its modes with a vector $\xx\in\R^m$ is denoted by $T \circ \xx^{\otimes m}$. More precisely,
\[
T \circ \xx^{\otimes m} = \sum_{i_1,\ldots, i_m}T_{i_1i_2\ldots i_m}x_{i_1}x_{i_2}\cdots x_{i_m}.
\]
Observe that $T \circ \xx^{\otimes m}$ is a homogeneous polynomial of degree $m$ in the variables $x_1,\ldots x_n$.

\begin{remark}
    Throughout this paper, we often do not differentiate between a homogeneous polynomial and its associated symmetric tensor. 
\end{remark}

\begin{lemma}
\label{lemma:closed-tensor-form-shallow-network}
Let $\dd = (n, m, k)$ and $\ww = (W_1, W_2)$. Then the numerators $P_{i,\ww}$ and the denominator $Q_{\ww}$ are given by
\begin{equation}
\label{eq:shallow-network-equations}
    \begin{aligned}
        P_{i,\ww}(\xx) &= \left(\sum_{j=1}^m b_{ij} \Sym(\hat{e}_{j,m})\right) \circ (W_1 \xx)^{\otimes(m-1)}, \\
        Q_{\ww}(\xx) &= \Sym(e_1 \otimes \dots \otimes e_m) \circ (W_1 \xx)^{\otimes m}.
    \end{aligned}
\end{equation}
\end{lemma}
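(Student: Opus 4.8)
The plan is to unwind the two formulas from the general recursion in Theorem~\ref{theorem:neural-network-closed-form-general}, specialized to the three-layer architecture $\dd = (n,m,k)$, and then to recognize the resulting polynomial identities as contractions of the stated symmetric tensors. Concretely, for $L = 2$ the theorem gives $Q_{\ww} = q^{(2)}$ and $P_{i,\ww} = p^{(2)}_i$, where $p^{(1)}_i = \ell_i = (W_1\xx)_i$, $q^{(2)}(\xx) = \prod_{j=1}^{m} p^{(1)}_j(\xx) = \ell_1\cdots\ell_m$, and $p^{(2)}_i(\xx) = \sum_{j=1}^{m} b_{ij}\prod_{s\neq j} p^{(1)}_s(\xx) = \sum_{j=1}^{m} b_{ij}\,\hat\ell_{j,m}(\xx)$. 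This is exactly the closed form~\eqref{eq:nmk-general-output}, so the content of the lemma is purely the translation of $\ell_1\cdots\ell_m$ and $\sum_j b_{ij}\hat\ell_{j,m}$ into the tensor contraction language.

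First I would treat the denominator. I claim that for any linear forms $\ell_1,\dots,\ell_m$ with $\ell_j = \sum_k a_{jk} x_k$, one has $\ell_1\cdots\ell_m = \Sym(e_1\otimes\cdots\otimes e_m)\circ (W_1\xx)^{\otimes m}$. To see this, note that the unsymmetrized tensor $e_1\otimes\cdots\otimes e_m$ contracted against $(W_1\xx)^{\otimes m}$ already equals $\prod_{j=1}^m (W_1\xx)_j = \prod_j \ell_j$, since the $j$-th mode, paired with the $j$-th copy of $W_1\xx$, picks out its $j$-th coordinate $\ell_j$. Because the contraction of a symmetric argument $(W_1\xx)^{\otimes m}$ against any tensor only sees its symmetrization — contracting $T\circ \xx^{\otimes m}$ is unchanged if $T$ is replaced by $\Sym(T)$ — we may freely replace $e_1\otimes\cdots\otimes e_m$ by $\Sym(e_1\otimes\cdots\otimes e_m)$ without altering the value. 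That gives the second line of~\eqref{eq:shallow-network-equations}.

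Next, for the numerator I would do the analogous computation one order lower. The tensor $\hat e_{j,m} = e_1\otimes\cdots\otimes e_{j-1}\otimes e_{j+1}\otimes\cdots\otimes e_m$ has order $m-1$, and contracting it (or its symmetrization, by the same invariance) against $(W_1\xx)^{\otimes(m-1)}$ yields $\prod_{s\neq j}\ell_s = \hat\ell_{j,m}(\xx)$. Taking the $b_{ij}$-weighted sum over $j$ and using linearity of contraction in the tensor slot gives $\big(\sum_{j=1}^m b_{ij}\Sym(\hat e_{j,m})\big)\circ (W_1\xx)^{\otimes(m-1)} = \sum_{j=1}^m b_{ij}\hat\ell_{j,m}(\xx) = P_{i,\ww}(\xx)$, which is the first line of~\eqref{eq:shallow-network-equations}. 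The one point deserving a sentence of care — and the only thing resembling an obstacle, though it is a mild one — is the bookkeeping lemma that $T\circ\xx^{\otimes d} = \Sym(T)\circ \xx^{\otimes d}$ for all $\xx$; this follows because each summand $x_{i_1}\cdots x_{i_d}$ in the contraction is invariant under permuting the indices, so averaging the coefficients $T_{i_1\dots i_d}$ over $\mathfrak{S}_d$ leaves the polynomial unchanged. Everything else is a direct substitution into the $L=2$ case of Theorem~\ref{theorem:neural-network-closed-form-general}, so I would present the proof as: (i) specialize the recursion, (ii) state and quickly justify the symmetrization-invariance of contraction, (iii) read off both identities.
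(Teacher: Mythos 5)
Your proof is correct and follows essentially the same route as the paper: specialize Theorem~\ref{theorem:neural-network-closed-form-general} to $L=2$, reduce to the closed form $P_{i,\ww}=\sum_j b_{ij}\hat\ell_{j,m}$, $Q_{\ww}=\ell_1\cdots\ell_m$, and then translate via the identity $x_1\cdots x_m = \Sym(e_1\otimes\cdots\otimes e_m)\circ\xx^{\otimes m}$ after substituting $y=W_1\xx$. The paper simply asserts this identity where you additionally justify it through the unsymmetrized tensor and the invariance $T\circ\xx^{\otimes d}=\Sym(T)\circ\xx^{\otimes d}$, a harmless and arguably cleaner refinement.
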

\begin{proof}
    See Appendix~\ref{apx:closed-tensor-form-shallow-network}
\end{proof}

\begin{example}
\label{example:shallow-(3,3,1)}
Consider the network $f_{\ww}$ with architecture $\dd = (3,3,1)$, given by the composition
\[
    \mathbb{R}^3 \xrightarrow{W_1} \mathbb{R}^3 \xrightarrow{\sigma} \mathbb{R}^3 \xrightarrow{W_2} \mathbb{R}.
\]
The output of the network is 
\[
    f_{\ww}(\xx) = W_2 \sigma(W_1 \xx) = 
    \begin{bmatrix}
        b_1 & b_2 & b_3
    \end{bmatrix}
    \begin{bmatrix}
        1/\ell_1 \\
        1/\ell_2 \\
        1/\ell_3
    \end{bmatrix}
    = \frac{b_1 \ell_2 \ell_3 + b_2 \ell_1 \ell_3 + b_3 \ell_1 \ell_2}{\ell_1 \ell_2 \ell_3} 
    = \frac{P_{1,\ww}(\xx)}{Q_{\ww}(\xx)}.
\]
Here the numerator is
\[
    P_{1,\ww}(\xx) = \left( b_1 \Sym(e_2 \otimes e_3) + b_2 \Sym(e_1 \otimes e_3) + b_3 \Sym(e_1 \otimes e_2) \right) \circ (W_1 \xx)^{\otimes 2},
\]
and the denominator is $Q_{\ww}(\xx) = \Sym(e_1 \otimes e_2 \otimes e_3) \circ (W_1 \xx)^{\otimes 3}.$
\end{example}

\begin{remark}
    If $d=(n,n,k)$, then $W_1\in\mathbb R^{n\times n}$ is generically invertible. Hence, understanding the neuromanifold $\mathcal{M}_{\dd,\sigma}$ is equivalent to studying the orbit of the tuple of symmetric tensors 
    \[
        (P_1,\dots,P_k,Q)=\left(\sum_{j=1}^m b_{1j} \Sym(\hat{e}_{j,m}),\dots,\sum_{j=1}^m b_{nj} \Sym(\hat{e}_{j,m}),\Sym(e_1 \otimes \dots \otimes e_m)\right)
    \]
    under the $GL_n(\R)$ action $(A\cdot T)(\xx) := T\circ (A\xx)^{\otimes m}$ for $A\in GL_n(\mathbb R)$ and $T\in S^m(\mathbb R^n)$.
\end{remark}

\subsection{Algorithm for recovering the parameters \texorpdfstring{$\ww$}{ww}}
\label{sec:3-reconstructing-ww}
We now present a method for reconstructing the parameters $\ww$ from the output function $f_{\ww}$. We first recover the matrix $W_1$, and then the matrix $W_2$.

 Consider 
 the neuromanifold $\mathcal{M}_{\dd,\sigma}(\C)$,  which is the image of the map $\Psi_{\dd, \sigma}$, where the parameters $\ww = (W_1,W_2)$ are allowed to be complex. 
 
 Assume we are given a tuple in the ambient space $(P_1,\dots,P_k,Q)\in \left(S^{m-1}(\C^n)\right)^k \times S^m(\C^n)$. According to Equation~\eqref{eq:nmk-general-output}, the tuple $(P_1,\dots,P_k,Q)$ belongs to the neuromanifold $\mathcal{M}_{\dd,\sigma}(\C)$ if and only if there exist parameters $\ww=(W_1,W_2)$ such that the following system of equations holds
\begin{equation}
\label{eq:nmk-definining-system}
    \begin{aligned}
        P_{i}(\xx) &= b_{i1} \hat{\ell}_{1,m}+\dots+b_{im} \hat{\ell}_{m,m}, \\
        Q(\xx) &= \ell_1\dots \ell_m,
    \end{aligned}
\end{equation}
where $\ell_j$ denotes the linear form given by $(W_1\xx)_j$ and $\hat{\ell}_{j,m}$ is the product of all $\ell_i$ except $\ell_j$.

We can recover the parameters $\ww$ from this system via the following algorithm.

\begin{algorithm}
\caption{Weight reconstruction for architecture $(n,m,k)$}
\label{algorithm:shallow-weights-reconstruction}
\begin{algorithmic}[1]
    \State \textbf{Factor test:} Check if the denominator $Q$ factors into a product of $m$ linear forms. 
        \Statex \hspace{\algorithmicindent} \textit{If not, then output $(P_1,\dots,P_k,Q)\not\in\mathcal{M}_{\dd,\sigma}(\C)$ and halt.}
    \State \textbf{Recover $W_1$:} Find linear forms $\ell_1,\dots,\ell_m$ such that $Q(\xx)=\ell_1\ell_2\dots \ell_m$.
    \State \textbf{Recover $W_2$:} With $W_1$ known, solve the linear system \eqref{eq:nmk-definining-system} for the coefficients $b_{ij}$. 
        \Statex \hspace{\algorithmicindent} \textit{If the system is inconsistent, output $(P_1,\dots,P_k,Q)\not\in\mathcal{M}_{\dd,\sigma}(\C)$.}
\end{algorithmic}
\end{algorithm}
\noindent
Let us now expand on each individual step of Algorithm~\ref{algorithm:shallow-weights-reconstruction}, outlining the methods used and how each step can be implemented in practice.\\

\noindent
\textbf{Factor test:} The first step of Algorithm \ref{algorithm:shallow-weights-reconstruction} checks whether $Q$ can be factorized into the product of $m$ linear forms. This will allow us to reconstruct the rows of $W_1$. This step is equivalent to checking whether $Q$ is in the Chow variety (see Section~\ref{sec:3-chow-variety}), which is defined by the vanishing of the so called Brill's equations~\cite{GUAN18}. 

The construction of Brill's equations is given in Chapter $4$ in~\cite{GELFAND94} where they are defined as the coefficients of a certain trilinear form $B(x,y,z)$.

\begin{theorem}[\cite{GELFAND94}]
    If $f\in S^{m}(\C^n)$, then $f$ splits into the product of linear factors if and only if $B(x,y,z)=0$ for all $x,y,z$, where the coefficients  of  $B(x, y, z)$ are computed from the coefficients of $f$.
\end{theorem}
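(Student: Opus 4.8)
The plan is to follow the classical argument of Gelfand--Kapranov--Zelevinsky. First I would recall the construction of the form $B(x,y,z)$ attached to $f\in S^{m}(\C^n)$. Writing the partial polarizations $f_j(x,y)$ of $f$ through the expansion $f(x+ty)=\sum_{j=0}^m t^j f_j(x,y)$ (so $f_0=f(x)$, $f_m(x,y)=f(y)$, and $f_j$ is bihomogeneous of degree $m-j$ in $x$ and $j$ in $y$), one produces from $f_1,\dots,f_m$ the ``power-sum forms'' $P_j(x,y)$ by a Newton-type recursion, designed so that for a completely decomposable $f=\ell_1\cdots\ell_m$ one has $P_j(x,y)=f(x)\sum_{i=1}^m\bigl(\ell_i(y)/\ell_i(x)\bigr)^j$. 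Brill's form $B(x,y,z)$ is then assembled from the $f_j$, the $P_j$, and a third vector of variables $z$, in such a way that its coefficients are polynomials in the coefficients of $f$ (see \cite[Ch.~4]{GELFAND94} for the explicit formulas).

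With this construction in place the ``only if'' direction is a direct substitution: plugging $f=\ell_1\cdots\ell_m$ into the formula for $B$, the forms $P_j$ collapse to the genuine power sums in the ratios $\ell_i(y)/\ell_i(x)$, and the symmetric-function identities built into the definition of $B$ force $B(x,y,z)\equiv 0$. Hence the Chow variety of completely decomposable forms is contained in $\mathcal B=\{f\in S^m(\C^n): B(x,y,z)\equiv 0\}$, and both of these sets are Zariski closed.

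For the converse one must show $\mathcal B$ is contained in the Chow variety, and the geometric content is that $B_f\equiv 0$ forces the projective hypersurface $X=V(f)\subset\projSp{n-1}$ to be a union of hyperplanes. I would first reduce to the case that $f$ is squarefree with $X$ smooth: a general $f$ is a limit of such, both sides are closed, and the multiplicity bookkeeping for non-reduced $f$ is handled by induction on the degree (peeling off a linear factor, for which one checks $B_{f/\ell}\equiv 0$). The key step is then to unwind the definition of $B$ and show that $B_f\equiv 0$ implies $f$ vanishes identically on the embedded tangent hyperplane $\mathbb T_pX$ for every smooth point $p\in X$; concretely, the components of $B$ encode, for each $p\in X$, the divisibility of the form $y\mapsto f(y)$ by the linear form $y\mapsto\langle\nabla f(p),y\rangle$ cutting out $\mathbb T_pX$. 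Finally, for a general point $p$ of any irreducible component $Z$ of $X$, the hyperplane $\mathbb T_pX=\mathbb T_pZ$ is contained in $X$, hence is itself a component, and since $X$ is smooth at $p$ only one component passes through $p$; therefore $Z=\mathbb T_pZ$ is a hyperplane. Running over all components yields $f=c\,\ell_1\cdots\ell_m$, as desired.

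The main obstacle is this middle step: writing out Brill's form explicitly and proving that its vanishing is equivalent to the tangent-hyperplane condition requires careful bookkeeping with the partial polarizations $f_j$, Newton's identities, and the various bidegrees. The reduction to the squarefree case --- controlling the behaviour of $B$ under degeneration and when $f$ has repeated factors --- is the other delicate point, and is where one genuinely uses that the Chow variety is closed.
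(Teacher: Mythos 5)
This theorem is not proved in the paper: it is quoted directly from Gelfand--Kapranov--Zelevinsky \cite{GELFAND94} (Chapter~4) and used as an external black box, so there is no in-paper argument against which to compare yours.

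Assessing your sketch on its own terms: the ``only if'' direction is sound. The converse has two genuine gaps. First, the reduction to squarefree $f$ with smooth $V(f)$ via ``a general $f$ is a limit of such, and both sides are closed'' is circular. You are trying to prove $\mathcal B\coloneqq\{f: B_f\equiv 0\}\subseteq\mathrm{Chow}$; to reduce to the smooth squarefree case you would need the smooth squarefree locus of $\mathcal B$ to be dense in $\mathcal B$, which presupposes $\mathcal B\subseteq\mathrm{Chow}$ --- a priori $\mathcal B$ could contain components consisting entirely of non-reduced forms, and ruling that out is precisely the content of the theorem. (The correct observation is only that the smooth squarefree locus of the \emph{Chow variety} is dense in the Chow variety, which does not help here.) Second, the pivotal claim --- that $B_f\equiv 0$ encodes divisibility of $f$ by the tangent linear form $y\mapsto\langle\nabla f(p),y\rangle$ at every smooth $p\in V(f)$, i.e.\ containment of the tangent hyperplane in $V(f)$ --- is asserted but never derived from the power-sum construction of $B$, and it is not the route taken in the reference. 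Your Gauss-map argument at the end (a constant Gauss map on an irreducible component forces that component to be a hyperplane) is correct, but it sits downstream of the unproved equivalence. As written, the converse direction is a plausible plan rather than a proof; the real work is exactly the step you flag as ``careful bookkeeping,'' and it needs to be carried out rather than gestured at.
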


While Brill's equations work well for determining if a form splits into the product of linear factors, their computation quickly becomes infeasible for large values of $m$ and $n$ (see Example~\ref{ex:331-Chow-variety}). For the architectures $\dd=(n,2,k)$, we present in Section~\ref{sec:4-nmk-architecture} an efficient test to determine whether $(P_1,\dots,P_k,Q)$ belongs to the neuromanifold $\mathcal{M}_{\dd,\sigma}(\C)$.\\

\noindent
\textbf{Recovering $W_1$:} To reconstruct $W_1$, we first need to determine the linear forms $\ell_1,\ldots, \ell_m$. There are two main methods to reconstruct them: deterministic and probabilistic. For the probabilistic approach see~\cite{KOIRAN18}, where it is shown that in the case of the $\dd=(n,n,1)$ architecture, one can efficiently recover the linear forms into which $Q(\xx)$ splits. For the deterministic approach, according to Proposition $2.11$ in~\cite{GELFAND94}, the matrix $W_1$ can be recovered by solving the following system of equations.

\begin{proposition}[\cite{GELFAND94}]
\label{lemma:lin-form-divides-hom-pol}
    Let $Q\in S^{d}(\C^n)$. Then, the nonzero linear form $l$ in $(\C^n)^*$ divides $Q$ if and only if 
    \begin{equation}
    \label{eq:Gelfand's-method-reconstructing-W1}
       \sum_{k=0}^d\frac{(-1)^k}{k!}\Delta_k(Q(x))\ell(x)^k\ell(y)^{d-k}=0, 
    \end{equation}
    where $\Delta_k(Q(x))=(\sum_{i=0}^ny_i\frac{\partial}{\partial x_i})^kQ(x)$.
\end{proposition}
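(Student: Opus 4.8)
The plan is to recognize the left-hand side of the displayed identity as the value of $Q$ at one explicitly chosen point, so that the proposition collapses to the elementary fact that a homogeneous form is divisible by a linear form exactly when it vanishes on the associated hyperplane.

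First I would prove the algebraic identity
\[
    \sum_{k=0}^d \frac{(-1)^k}{k!}\,\Delta_k(Q(x))\,\ell(x)^k\,\ell(y)^{d-k}
    \;=\;
    Q\!\left(\ell(y)\,x - \ell(x)\,y\right),
\]
viewing both sides as polynomials in $x$ and $y$. This comes from the exact (terminating) homogeneous Taylor expansion $Q(ax+by)=\sum_{k=0}^d \tfrac{a^{d-k}b^k}{k!}\,\Delta_k(Q)(x,y)$: here $\Delta_k(Q)(z,y)=\big(\sum_i y_i\partial_{z_i}\big)^kQ(z)$ is homogeneous of degree $d-k$ in $z$, so evaluating at $z=ax$ extracts the scalar $a^{d-k}$, while differentiating $k$ times in the direction $by$ extracts $b^k$. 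Substituting $a=\ell(y)$, $b=-\ell(x)$ and pulling the sign $(-1)^k$ out of $b^k$ gives the identity. This is the only computation in the proof, and it is routine; the single point requiring care is bookkeeping the homogeneity degrees and the sign so that the factors $\ell(y)^{d-k}$ and $(-1)^k\ell(x)^k$ come out as written.

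With the identity in hand, the proof finishes quickly. The key observation is that $\ell\!\left(\ell(y)x-\ell(x)y\right)=\ell(x)\ell(y)-\ell(x)\ell(y)=0$, so the argument of $Q$ always lies on the hyperplane $H=\{\ell=0\}$. For the forward direction: if $\ell\mid Q$ then $Q$ vanishes on $H$, hence the right-hand side — and therefore the sum — is identically zero. For the converse: assuming the sum vanishes identically, i.e.\ $Q\!\left(\ell(y)x-\ell(x)y\right)\equiv 0$ in $x,y$, fix a vector $y_0$ with $\ell(y_0)=1$ (possible as $\ell\neq 0$) and, for an arbitrary $v\in H$, set $x=v$; then $\ell(y_0)v-\ell(v)y_0=v$, so $Q(v)=0$. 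Thus $Q$ vanishes on all of $H$, and choosing linear coordinates in which $\ell=x_1$ shows $Q(0,x_2,\dots,x_n)\equiv 0$, i.e.\ $x_1\mid Q$, which is $\ell\mid Q$.

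I do not anticipate a genuine obstacle: essentially all the content sits in the Taylor identity, which is classical, and the rest is the standard ``vanishing on a hyperplane equals divisibility by its defining form'' fact; the only thing to watch is the exponent and sign bookkeeping in the expansion.
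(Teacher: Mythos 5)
The paper does not include its own proof of this statement; it is quoted directly from Gelfand--Kapranov--Zelevinsky (Proposition~2.11 of Chapter~4 in \cite{GELFAND94}), so there is no in-paper argument to compare against. Your proof is correct and is essentially the argument of the cited source: using the terminating Taylor expansion $Q(ax+by)=\sum_{k=0}^{d}\tfrac{a^{d-k}b^{k}}{k!}\,\Delta_k(Q(x))$ with $a=\ell(y)$, $b=-\ell(x)$, the displayed sum is identified with $Q\bigl(\ell(y)\,x-\ell(x)\,y\bigr)$; since $\ell\bigl(\ell(y)\,x-\ell(x)\,y\bigr)=0$, its identical vanishing is equivalent to $Q$ vanishing on the hyperplane $\{\ell=0\}$, i.e.\ to $\ell\mid Q$. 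One minor typographical point: the paper writes the operator as $\sum_{i=0}^{n}y_i\,\partial/\partial x_i$; the index should start at $i=1$, and your expansion implicitly uses the correct directional derivative, so this does not affect your argument.
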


Below, let us discuss two examples of reconstrucing $W_1$.

\begin{example}
    Let $Q(\xx)=x_1^3 - x_1x_2^2 - x_1x_2x_3 + x_2^2x_3 - x_1x_3^2 + x_2x_3^2$ and $\ell(\xx)=c_1x_1+c_2x_2+c_3x_3$. Setting up the system~\eqref{eq:Gelfand's-method-reconstructing-W1} above will lead us to $54$ polynomial equations. The Gröbner basis of the ideal $I$ generated by these equations is equal to
    \begin{enumerate}
        \begin{multicols}{2}
            \item $c_1c_3^3 - 2c_2c_3^3 + c_3^4$,
            \item $c_1^3 + c_2^3 - 3c_2c_3^2 + c_3^3$,
            \item $c_1^2c_2 - c_3^3$,
            \item $c_1c_2^2 + c_2^3 - c_1c_3^2 - c_3^3$,
            \item $c_1^2c_3 - c_3^3$,
            \item $c_1c_2c_3 - \frac{1}{2}c_1c_3^2 - \frac{1}{2}c_3^3$,
            \item $c_2^2c_3 - c_2c_3^2$.
        \end{multicols}
    \end{enumerate}

    and the primary decomposition of $I$ is equal to 
    \[
        I=(c_3, c_1 + c_2)\cap (c_2 - c_3, c_1 - c_3)\cap (c_2, c_1 + c_3) \cap J
    \]
    where $J=(c_3^3, c_2c_3^2, c_2^2c_3, 2c_1c_2c_3 - c_1c_3^2, c_1^2c_3, c_2^3, c_1c_2^2 - c_1c_3^2, c_1^2c_2, c_1^3)$ is primary and $\sqrt{J}=(c_1, c_2, c_3)$. The first three primary ideals give us exactly that
    \[
        f(x) = (x_1+x_2+x_3)(x_1-x_2)(x_1-x_3),\text{ so }W_1=\begin{bmatrix}
        1 & 1 & 1\\
        1 & -1 & 0\\
        1 & 0 & -1
    \end{bmatrix}.
    \]
\end{example}

However, Galois theory tells us that if we get polynomials of degree strictly higher than four, then the solution might not be as nice as in the previous example. If we take $Q$ to be a homogeneous polynomial of degree five over two variables, then there are no closed-form equations that describe the linear forms $\ell_i$ in the decomposition of $Q$. The next example illustrates that we can hope to reconstruct the forms only numerically at best.

\begin{example}
\label{example:251}
    Let $\dd=(2,5,1),$ and $Q(\xx)=x_1^5 - x_1x_2^4 + x_2^5$. Note that $Q(\xx)$ is totally decomposable since it is a binary form. However, to find the five linear forms, we need to solve six equations in~(\ref{eq:Gelfand's-method-reconstructing-W1}). Using the Gröbner basis, we will obtain the single equation
    \[
        c_1^5 - c_1c_2^4 - c_2^5 = 0.
    \]
    To obtain the five desirable linear forms, we need to solve this equation. However, the equation above can be solved only using numerical methods as there is no closed-form solution according~\cite{LANG94}. Using numerical techniques, we can obtain the rows of $W_1$ up to a desired precision
    \[
        W_1^T\approx
        \begin{bmatrix}
            1 & 1 & 1 & 1 & 1 \\
            -0.8566 & -0.1500-0.8974 i & -0.1500+0.8974 i & 1.0783-0.4969 i & 1.0783+0.4969 i
        \end{bmatrix}^T.
    \]
\end{example}

\begin{remark}
    An approach that combines the first two steps of the algorithm by solving a univariate polynomial and a linear system instead is discussed in Section~\ref{sec:3-algebraic-tensor-decomp}.
\end{remark}

\noindent
\textbf{Recovering $W_2$:} 
Now  let us assume that we have reconstructed $W_1$. The next and final step of reconstructing $W_2$ can be done by solving the $k$ linear equations in $W_2$ from~\eqref{eq:nmk-definining-system}
\[
    P_i = b_{i1} \hat{\ell}_{1,m}+\dots+b_{im} \hat{\ell}_{m,m}.
\]
The polynomials $\{\hat{\ell}_{1,m},\dots,\hat{\ell}_{m,m}\}$ can be obtained from $W_1$ and span at most an $m$-dimensional linear subspace in $S^{m-1}(\C^n)$. Therefore, we can reconstruct all the rows of $W_2$ if and only if $P_i$ belongs to the linear span of $\{\hat{\ell}_{1,m},\dots,\hat{\ell}_{m,m}\}$.

\subsection{A general method for finding the matrix \texorpdfstring{$W_1$}{W1}}
\label{sec:3-algebraic-tensor-decomp}

In this subsection we describe an alternative 
algebraic method to find the matrix $W_1$  
for $(n,m,k)$ architectures.
The advantage of the method we propose here,
is that 
the only computational difficulty lies in finding the roots of a univariate polynomial of degree $m$. 
This can be done by radicals for all polynomials up to degree $4$.
For all cases not solvable by radicals one can find solutions with arbitrary precision using numerical techniques.  
After this nonlinear part, 
the remainder of the problem boils down to the solution of a linear system of equations. 

Given a polynomial $Q(\xx)$,
we want to decompose it into a product 
\[
    \prod_{i=1}^m(x_1+a_{i2}x_2+ \cdots +a_{in}x_n).
\]
Instead  of first checking if $Q(x)$ can be factored into linear forms, we directly try to find the factorization.
We assume that the coefficient of $x_1^m$ is nonzero and therefore it can be normalized to $1$. This  assumption is not restrictive 
except for polynomials where (up to permutation) no $m$th power of an $x_i$ appears. 
Under this assumption we can set  the coefficient of $x_1$ in all the linear forms in the product to $1$.
This solves the ambiguity of finding the linear forms up to a multiplicative constant and creates unique solutions. 

Comparing coefficients in the polynomial equality 
\[
x_1^m+\sum_{\substack{u \in \N^n:\deg(u)=m \\ u \neq (m,0,\dots,0)}}C_ux^u = \prod_{i=1}^m(x_1+a_{i2}x_2+ \cdots +a_{in}x_n)
\]
we obtain in particular
\begin{align*}
    a_{12}+a_{22}+ \cdots + a_{n2} &= C_{(m-1,1,0,\dots ,0)} \\
    a_{12}a_{22}+a_{12}a_{32} +\cdots + a_{(n-1)2}a_{n2} &= C_{(m-2,2,0,\dots ,0)} \\
    &\vdots \\
    \mu_k(a_{12},a_{22}, \dots, a_{n2}) &= C_{(m-k,k,0,\dots ,0)} \\
    &\vdots \\
    a_{12}a_{22} \cdots a_{n2} &= C_{(0,m,0,\dots ,0)}
\end{align*}
In general the coefficient of $C_{(m-k,k,0,\dots ,0)}$ for $k=1,\dots , m$ 
is the $k$-th symmetric polynomial in $a_{12},a_{22}, \dots, a_{n2}$, 
usually denoted by $\mu_k(a_{12},a_{22}, \dots, a_{n2})$. Then for the univariate polynomial
\[g(y):=y^m + \sum_{k=1}^{m-1}(-1)^kC_{(m-k,k,0,\dots ,0)}y^k\]
we obtain
\begin{align*}
    g(y)&=y^m + \sum_{k=1}^{m-1}(-1)^k\mu_k(a_{12},a_{22}, \dots, a_{n2})y^k \\
    &=\prod_{k=1}^m(y-a_{k2}),
\end{align*}
so we can recover all $a_{k2}$ by solving the equation $g(y)=0$. 

Knowing the coefficients $a_{12},\ldots, a_{n2}$ (up to permutation),
allows us to generically recover the remaining coefficients by solving linear systems of equations.
In particular,
in order to recover $a_{1l}, \dots, a_{nl}$
we look at the $m$ coefficients 
\[C_{(m-1,0,0,\dots,0,1,0,\dots, 0)},C_{(m-2,1,0,\dots,0,1,0,\dots, 0)},\dots, C_{(0,m-1,0,\dots,0,1,0,\dots, 0)},\]
where after the second position the index of $C$ has a $1$ in position $l$ and $0$ elsewhere. 
We then get
\begin{align*}
    a_{1l}+a_{2l}+ \cdots + a_{ml} &= C_{(m-1,0,0,\dots,0,1,0,\dots, 0)} \\
    a_{1l}\mu_1(\hat{a}_{12})+a_{2l}\mu_1(\hat{a}_{22})+ \cdots + a_{ml}\mu_1(\hat{a}_{m2}) &= C_{(m-2,1,0,\dots,0,1,0,\dots, 0)} \\
    & \vdots \\
    a_{1l}\mu_{n-1}(\hat{a}_{12})+a_{2l}\mu_{n-1}(\hat{a}_{22})+ \cdots + a_{n2}\mu_{n-1}(\hat{a}_{m2}) &= C_{(0,m-1,0,\dots,0,1,0,\dots, 0)},
\end{align*}
where we use the symbol $\hat{a}_{i2}$ to denote the vector $(a_{12},a_{22},\dots a_{m2})$ with entry $a_{i2}$ removed.
For a fixed $l$, this is a linear system in the $a_{il}$ that generically has one solution,
so we can recover the rest of the parameters $a_{ij}$ uniquely.

In the end, it is necessary to check if the product of the linear forms we obtained is indeed equal to the polynomial $Q(x)$.
If not, we know that the original polynomial was not decomposable to begin with.
If yes, then we know that the decomposition is real exactly when all the $a_{ij}$ are real. 
If not all of them are real, then no such decomposition exists.

\section{Algebraic geometry of shallow neural networks}
\label{sec:ideals}
In this section, we study shallow neural networks through the lens of algebraic geometry. 
The main goal is to describe the neurovariety, 
i.e., to provide a generating set of polynomials for the corresponding ideal. In Section \ref{sec:3-parameters-count},  we show that the architectures $\dd=(2,m,k)$ are the only filling shallow architectures.
In Section~\ref{sec:4-nmk-architecture}, we fully describe the neurovariety for architectures $\dd=(n,2,k)$ and provide a partial description for general architectures $\dd=(n,m,k)$. In Section \ref{sec:3-chow-variety}, we discuss the connection between shallow networks and Chow varieties.

\subsection{Filling shallow architectures}
\label{sec:3-parameters-count}

Let $M(n,m,k)$ be the dimension of the ambient space of the shallow network $\dd=(n,m,k)$. According to Equation \eqref{eq:dim-ambient-space-formula},
\begin{equation}
\label{eq:dim-amb-space-shallow-networks}
    M(n,m,k) = \binom{n+m-1}{m} + k\binom{n+m-2}{m-1}.
\end{equation}
On the other hand, the number of trainable parameters is $N(n,m,k)=mn+km$. To determine for which triples $(n,m,k)$ the corresponding neurovariety $\mathcal{V}_{\dd,\sigma}$ is filling, we must identify all shallow architectures for which the parameter count is at least the dimension of the ambient space. The following lemma shows that the only non-trivial architectures ($n>1$) that satisfy this parameter constraint are of the form $\dd=(2,m,k)$.

\begin{lemma}
\label{lemma:shallow-network-params-vs-amb-dim}
  Let $N(n,m,k)=mn+km$ and $M(n,m,k)=\binom{n+m-1}{m}+k\binom{n+m-2}{m-1}.$
  Then
  \[
     N(n,m,k)\;\ge\;M(n,m,k)
     \quad\Longleftrightarrow\quad
     n=1,2.
  \]
\end{lemma}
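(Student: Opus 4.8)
The plan is to treat the claimed equivalence as an inequality between two explicit functions of $(n,m,k)$ and exploit that the right-hand side $M(n,m,k)$ grows polynomially in $m$ of degree $n-1$ (coming from $\binom{n+m-1}{m}$), while the left-hand side $N(n,m,k)=mn+km$ grows only linearly in $m$. So for $n\ge 3$ the inequality $N\ge M$ must fail once $m$ is large, and one expects it to fail for \emph{every} $m\ge 2$. The one subtlety is that $k$ appears on both sides: on the left with coefficient $m$, on the right with coefficient $\binom{n+m-2}{m-1}$. Since $\binom{n+m-2}{m-1}\ge m$ precisely when $n\ge 3$ (indeed $\binom{n+m-2}{m-1}=\binom{n+m-2}{n-1}\ge n+m-2\ge m$ for $n\ge 3$, with equality patterns easy to track), increasing $k$ only makes things worse for $n\ge 3$; hence it suffices to check $N\ge M$ at $k$ as small as the architecture allows, and separately note that the $k$-dependent terms already satisfy $km\le k\binom{n+m-2}{m-1}$.

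Concretely, I would proceed as follows. First, dispose of $n=1$: then $\binom{m}{m}=1$ and $\binom{m-1}{m-1}=1$, so $M(1,m,k)=1+k$ while $N(1,m,k)=m+km=m(1+k)\ge 1+k$, giving the inequality (this matches Example~\ref{example:motivaitonal-example}-style degeneracy where widths of size $1$ stop degree growth). Second, handle $n=2$: here $\binom{m+1}{m}=m+1$ and $\binom{m}{m-1}=m$, so $M(2,m,k)=(m+1)+km$ and $N(2,m,k)=2m+km$, and $2m+km\ge (m+1)+km \iff 2m\ge m+1 \iff m\ge 1$, which always holds. Third, and this is the main case, show $N(n,m,k)<M(n,m,k)$ for all $n\ge 3$, $m\ge 2$, $k\ge 1$. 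Split the comparison into the two pieces: it suffices to prove $mn\le\binom{n+m-1}{m}$ with the inequality strict, together with $km\le k\binom{n+m-2}{m-1}$, because adding these gives $N\le M$ and the strictness of the first gives $N<M$. The second piece reduces to $m\le\binom{n+m-2}{m-1}=\binom{n+m-2}{n-1}$, which for $n\ge 3$ follows from $\binom{n+m-2}{n-1}\ge\binom{m}{2}\ge m$ when $m\ge 3$, and is checked directly for $m=2$. The first piece, $mn<\binom{n+m-1}{m}=\binom{n+m-1}{n-1}$, I would prove by a short induction on $m$ (or on $n$): base case $m=2$ gives $\binom{n+1}{2}=\tfrac{(n+1)n}{2}>2n$ for $n\ge 3$; for the inductive step use $\binom{n+m-1}{n-1}=\binom{n+m-2}{n-1}\cdot\frac{n+m-1}{m}$ or the Pascal recursion together with monotonicity.

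The only real obstacle is bookkeeping: making sure the strict-versus-weak inequalities line up so that the final statement is the clean equivalence "$N\ge M \iff n\le 2$" rather than something off by the boundary cases $m=1$ or $k=0$. I would therefore state explicitly at the start which ranges of $m,k$ are in force (presumably $m\ge 2$ for a genuine hidden layer and $k\ge 1$), quote Lemma~\ref{lemma:total-degree} / Equation~\eqref{eq:dim-amb-space-shallow-networks} for the formula for $M$, and then the proof is a finite case analysis ($n=1$, $n=2$) plus one monotone induction ($n\ge 3$). No deep input is needed beyond elementary binomial estimates.
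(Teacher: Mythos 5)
Your overall architecture (case split on $n=1$, $n=2$, $n\ge 3$; the last case by comparing the two summands of $N$ and $M$ term-by-term) matches the paper's proof, and the $n=1$ and $n=2$ computations are identical to the paper's and correct. However, in the $n\ge 3$ case there is a genuine error in where you place the strict inequality. You propose to show $mn < \binom{n+m-1}{m}$ \emph{strictly} and $km \le k\binom{n+m-2}{m-1}$ weakly, and then add. But your claimed base case, ``$\binom{n+1}{2}=\tfrac{n(n+1)}{2}>2n$ for $n\ge 3$,'' is false at $n=3$: there $\binom{4}{2}=6=2\cdot 3$, an equality, not a strict inequality. In fact $\binom{n+m-1}{m}\ge mn$ is only weak in general (equality at $n=3,m=2$ and at all $m=1$), so the induction you sketch cannot yield the strict version, and without strictness somewhere you only get $N\le M$, not $N<M$.

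The fix, which is what the paper does, is to put the strictness on the $k$-dependent term instead: for $n\ge 3$ and $m\ge 2$ one has $\binom{n+m-2}{m-1}\ge\binom{m+1}{2}=\tfrac{m(m+1)}{2}>m$, so $km<k\binom{n+m-2}{m-1}$ for $k\ge 1$, while $\binom{n+m-1}{m}\ge mn$ needs only to hold weakly. This is also where the implicit hypotheses $m\ge 2$, $k\ge 1$ enter essentially: at $m=1$ we have $N(n,1,k)=n+k=M(n,1,k)$ for every $n$, and for $k=0$, $n=3$, $m=2$ we get $N=6=M$, so the equivalence in the lemma genuinely depends on these ranges. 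Your remark that increasing $k$ ``only makes things worse for $n\ge 3$'' is the right intuition, but your bookkeeping inverts which half of the sum carries the strictness, and the base case $n=3,m=2$ is exactly where this inversion fails.
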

\begin{proof}
    See Appendix~\ref{apx:shallow-network-params-vs-amb-dim}
\end{proof}

\begin{remark}
    The architecture $\dd=(1,m,k)$ is trivial. The output of the network is equal to $P_{i,\ww}(x)=(\sum_{j=1}^mb_{ij}\hat{a}_{j1})x^{m-1}$ and $Q_{\ww}(\xx)=(a_{11}\dots a_{m1})x^m$. Thus, the corresponding neuromanifold $\mathcal{M}_{\dd,\sigma}$ fills the entire ambient space as the ambient space is spanned by the monomials $x^{m-1}$ and $x^m$. So, the coefficients in front of them can independently take any real value.
\end{remark}

Next, we show that when $n=2$, and its neurovarity $\mathcal{V}_{\dd,\sigma}(\C)$ is filling, but the neuromanifold $\mathcal{M}_{\dd,\sigma}(\C)$ is not. We 
 will use the following Lemma.

\begin{lemma}
\label{lemma:lin-independence-of-g_hat_i}
    The forms $\{\hat{\ell}_1,\hat{\ell}_2,\dots,\hat{\ell}_m\}$ are linearly independent in $S^{m-1}(\C^n)$ if and only if all rows of $W_1$ are pairwise linearly independent,
\end{lemma}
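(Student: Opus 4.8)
The plan is to prove both implications by restricting the forms $\hat{\ell}_{j,m}$ to the linear hyperplanes cut out by the $\ell_j$'s; throughout I use the standing assumption $n=d_0\ge 2$.

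First I would dispatch the ``only if'' direction by contraposition. Suppose the $i$-th and $j$-th rows of $W_1$ are linearly dependent with $i\ne j$. If one of them, say the $i$-th, is zero, then $\ell_i\equiv 0$, so every $\hat{\ell}_{k,m}$ with $k\ne i$ contains the factor $\ell_i$ and vanishes identically; since $m\ge 2$ this gives at least one zero vector in the set, hence a nontrivial dependence. If both rows are nonzero, then $\ell_i=c\,\ell_j$ for some $c\in\C\setminus\{0\}$, and pulling out the common factor $\prod_{k\ne i,j}\ell_k$ yields $\hat{\ell}_{j,m}=\ell_i\prod_{k\ne i,j}\ell_k=c\,\ell_j\prod_{k\ne i,j}\ell_k=c\,\hat{\ell}_{i,m}$, again a nontrivial dependence. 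In either case $\{\hat{\ell}_{1,m},\dots,\hat{\ell}_{m,m}\}$ is linearly dependent.

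For the ``if'' direction, assume the rows of $W_1$ are pairwise linearly independent; in particular each $\ell_j$ is a nonzero linear form and no two are proportional. Suppose $\sum_{j=1}^m\lambda_j\hat{\ell}_{j,m}=0$. Fix an index $j_0$ and restrict this identity to the hyperplane $H_{j_0}=\{\ell_{j_0}=0\}\cong\C^{n-1}$. For every $k\ne j_0$ the factor $\ell_{j_0}$ divides $\hat{\ell}_{k,m}$, so $\hat{\ell}_{k,m}|_{H_{j_0}}=0$, and we are left with $\lambda_{j_0}\,\hat{\ell}_{j_0,m}|_{H_{j_0}}=0$. Now $\hat{\ell}_{j_0,m}|_{H_{j_0}}=\prod_{k\ne j_0}\ell_k|_{H_{j_0}}$, and each factor is nonzero: if $\ell_k|_{H_{j_0}}\equiv 0$ then $\ker\ell_{j_0}\subseteq\ker\ell_k$, which (as $\ell_{j_0}\ne 0$) forces $\ell_k\in\C\,\ell_{j_0}$, contradicting pairwise independence. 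Since the polynomial ring over $\C$ is an integral domain, a product of nonzero linear forms is nonzero, so $\hat{\ell}_{j_0,m}|_{H_{j_0}}\ne 0$ and hence $\lambda_{j_0}=0$. As $j_0$ was arbitrary, all $\lambda_j$ vanish, proving linear independence.

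The only delicate point — and the one I would state most carefully — is the restriction step in the second direction: one must check that passing to $H_{j_0}$ does not accidentally kill $\hat{\ell}_{j_0,m}$, and this is exactly where \emph{pairwise} (rather than merely individual) linear independence of the rows is needed, and where $n\ge 2$ is used so that $H_{j_0}$ has positive dimension. Everything else is a short formal manipulation. I would also append a remark that this lemma is precisely what guarantees the span $\mathrm{span}\{\hat{\ell}_{1,m},\dots,\hat{\ell}_{m,m}\}\subseteq S^{m-1}(\C^n)$ used to recover $W_2$ in Algorithm~\ref{algorithm:shallow-weights-reconstruction} attains its maximal dimension $m$ for generic $W_1$.
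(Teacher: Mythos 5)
Your proof is correct, and for the interesting direction it takes a genuinely different route from the paper. The paper proves only the nontrivial direction (pairwise linear independence of rows implies linear independence of the $\hat{\ell}_j$'s) and does so by contraposition combined with unique factorization: assuming a dependence, it expresses $\hat{\ell}_m=\ell_1\cdots\ell_{m-1}$ as a combination of the remaining $\hat{\ell}_i$, each divisible by $\ell_m$, so that $\ell_m \mid \ell_1\cdots\ell_{m-1}$, and since a nonzero linear form is irreducible in $\C[x_1,\dots,x_n]$ this forces $\ell_m \mid \ell_i$ for some $i<m$, i.e., a proportional pair of rows. You, by contrast, work directly: given $\sum_j\lambda_j\hat{\ell}_{j,m}=0$, you restrict to the hyperplane $H_{j_0}=\{\ell_{j_0}=0\}$ to kill every term except the $j_0$-th, then observe that $\hat{\ell}_{j_0,m}|_{H_{j_0}}$ is a product of nonzero linear forms on $H_{j_0}$ (nonzero precisely because no $\ell_k$ is proportional to $\ell_{j_0}$) and hence nonzero in the integral domain, so $\lambda_{j_0}=0$. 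Your argument avoids explicit appeal to the UFD property and is a touch more geometric; the paper's is a touch shorter. You also spell out the easy converse direction (including the degenerate case $\ell_i\equiv 0$), which the paper leaves implicit, and you correctly flag the role of pairwise (not merely individual) independence and of $n\ge 2$ at the restriction step. Both proofs are sound; yours is somewhat more complete and self-contained.
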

\begin{proof}
    See Appendix~\ref{apx:lin-independence-of-g_hat_i}
\end{proof}

\begin{proposition}
\label{prop:neuromanifold-(2,m,k)}
    If $\dd=(2,m,k)$ with $m\geq2$ and $k\geq1$, then the neurovariety $\mathcal{V_{\dd,\sigma}}(\C)$ is filling, but the neuromanifold $\mathcal{M}_{\dd,\sigma}(\C)$ is not.
\end{proposition}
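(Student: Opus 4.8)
The plan is to establish the two claims separately. For the filling claim, I would argue that the combinatorial parameter map $\Psi_{\dd,\sigma}$ for $\dd=(2,m,k)$ is dominant onto the ambient space $\bigl(S^{m-1}(\C^2)\bigr)^k \times S^m(\C^2)$, whose dimension by Equation~\eqref{eq:dim-amb-space-shallow-networks} is $M(2,m,k) = (m+1) + km$. First I would note that every binary form of degree $m$ factors into $m$ linear forms over $\C$, so a generic $Q \in S^m(\C^2)$ can be written as $\ell_1 \cdots \ell_m$ with $\ell_j = (W_1\xx)_j$ for an appropriate $2\times m$ matrix $W_1$; for generic $Q$ the roots are distinct, hence the rows of $W_1$ are pairwise linearly independent. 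By Lemma~\ref{lemma:lin-independence-of-g_hat_i}, the forms $\hat\ell_{1,m},\dots,\hat\ell_{m,m}$ are then linearly independent in $S^{m-1}(\C^2)$, and since $\dim S^{m-1}(\C^2) = m$, they form a basis. Consequently, for each $i$ the linear system $P_i = \sum_{j} b_{ij}\hat\ell_{j,m}$ has a (unique) solution $b_{i1},\dots,b_{im}$ for any prescribed $P_i$. Thus $\Psi_{\dd,\sigma}$ hits a Zariski-dense subset — indeed, via this construction, it surjects onto the dense open set where $Q$ has distinct roots — and the neurovariety $\mathcal{V}_{\dd,\sigma}(\C)$ equals the whole ambient space.

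For the second claim, that $\mathcal{M}_{\dd,\sigma}(\C) \neq \mathcal{V}_{\dd,\sigma}(\C)$, I would exhibit a point of the ambient space not in the image. The natural obstruction is that in Equation~\eqref{eq:nmk-definining-system} the denominator $Q$ is \emph{forced} to be a product of $m$ linear forms, and although over $\C$ this is automatic for binary forms, the numerators must lie in the span of the $\hat\ell_{j,m}$ \emph{associated to that same factorization}. When $Q$ has a repeated root, say $\ell_1 = \ell_2$, then among $\hat\ell_{1,m},\dots,\hat\ell_{m,m}$ the first two coincide, so their span has dimension at most $m-1$; hence not every $P_i \in S^{m-1}(\C^2)$ (an $m$-dimensional space) can be represented. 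Concretely, I would take $Q = \ell^2 \ell_3\cdots\ell_m$ with $\ell,\ell_3,\dots,\ell_m$ pairwise independent, and choose $P_1 \in S^{m-1}(\C^2)$ not in $\mathrm{span}\{\ell\cdot\hat{(\ell_3\cdots\ell_m)},\,\ell^2\hat\ell_{3},\dots\}$ — for instance a form not divisible by $\ell$, while every element of that span is divisible by $\ell$. One must check that this $Q$ admits \emph{no other} factorization into $m$ linear forms giving a larger span: but the factorization of a binary form into linear factors is unique up to scalars and permutation, so the multiset $\{\ell,\ell,\ell_3,\dots,\ell_m\}$ is forced, and the span of the $\hat\ell_{j,m}$ is intrinsic. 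Therefore $(P_1,0,\dots,0,Q) \in \mathcal{V}_{\dd,\sigma}(\C) \setminus \mathcal{M}_{\dd,\sigma}(\C)$.

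The main obstacle I anticipate is the second part: one must be careful that the failure is genuine and not circumvented by a different decomposition. The key technical input is the uniqueness of factorization of binary forms over $\C$ (equivalently, that $\C[x_1,x_2]$ is a UFD and linear forms are irreducible), which pins down the multiset of linear factors of $Q$ and hence makes the span $\mathrm{span}\{\hat\ell_{1,m},\dots,\hat\ell_{m,m}\}$ a well-defined invariant of $Q$ that drops dimension precisely when $Q$ has a repeated factor. A secondary point worth spelling out is that the locus of binary forms with a repeated factor (the discriminant hypersurface) is nonempty and of codimension one, so such $Q$ genuinely exist and the excluded set is substantial; and for such $Q$ one exhibits an explicit $P_1$ outside the span, as above. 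The dimension bookkeeping — $\dim\mathrm{span} \le m-1 < m = \dim S^{m-1}(\C^2)$ — then makes the non-surjectivity immediate.
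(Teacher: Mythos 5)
Your proposal is correct and takes essentially the same route as the paper. For the filling direction, you both work over the Zariski-open locus where $Q$ has distinct roots, reconstruct $W_1$ from the (over $\C$, always available) factorization of the binary form, and invoke Lemma~\ref{lemma:lin-independence-of-g_hat_i} to conclude that $\{\hat\ell_{j,m}\}$ is a basis of $S^{m-1}(\C^2)$, hence $W_2$ exists. For the non-filling direction, the paper makes the same construction with the concrete choice $\ell_1=\ell_2=x_1$ and $P_i = x_2^{m-1}$; your version is identical in spirit and is actually slightly more careful in spelling out why no alternative factorization of $Q$ can rescue the point (uniqueness of factorization in the UFD $\C[x_1,x_2]$), a point the paper leaves implicit.
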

\begin{proof}
    See Appendix~\ref{apx:neuromanifold-(2,m,k)}
\end{proof}

\subsection{Architectures \texorpdfstring{$\dd=(n,m,k)$}{d=(n,m,k)}}
\label{sec:4-nmk-architecture}

We have already discussed architectures of the form $\dd = (n,1,m)$  in Section~\ref{subsec:2-shape-of-rational-neural-networks}.
The first interesting case is when the middle layer has dimension $2$. 
Let $\dd = (n,2,m)$ with $n\geq 2$.
Then the neural network
has the form 
\begin{equation*}
    \Bigg(\frac{\sum_{1\leq i \leq n}C_{k,\e_i}x_i}{\sum_{1\leq i \leq j \leq n}C_{\e_i+\e_j}x_ix_j} \Bigg)_{k=1, \dots , m}.
\end{equation*}
In this case we can fully characterize the ideal of the neurovariety. 
\begin{theorem}
\label{theorem:n2m-architecture-ideal}
    Let $\dd = (n,2,m)$ with $n\geq 2$. Then the ideal of the neural network is generated by all $3\times 3$ minors of the matrix
\[\mathbf{M} = 
        \bordermatrix{~ & 1,\mathbf{0} & \dots & m,\mathbf{0}& \e_1 & \e_2 &\dots & \e_{n}\cr
              \e_1 & C_{1\e_1} & \dots &C_{m,\e_1} & 2C_{2\e_1} & C_{\e_1+\e_2} & \dots &C_{{\e_1 +\e_{n}}}\cr
              \e_2 & C_{1,\e_2} & \dots &C_{m,\e_2} & C_{\e_1+\e_2} & 2C_{2\e_2} & \dots &C_{\e_2 +\e_{n}} \cr
              \vdots & \vdots & \ddots & \vdots &\vdots & \vdots & \ddots & \vdots\cr
              \e_{n} & C_{1,\e_{n}} & \dots &C_{m,\e_n}& C_{{\e_1 +\e_{n}}} & C_{{\e_2 +\e_{n}}} & \dots &2C_{2\e_{n}} \cr
              }.
    \]
    The corresponding index of  the coefficient $C$
    is the sum of the row and column indexes,
    with the exception that if the indices agree then the $C$ coefficient is multiplied by a factor of $2$.

    The dimension of the ideal is $2(n+m)-1$.
\end{theorem}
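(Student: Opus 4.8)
The plan is to realize the matrix $\mathbf{M}$ as (essentially) a catalecticant-type matrix whose columns are stratified into two blocks: the first $m$ columns encode the numerators $P_1,\dots,P_m$ (degree-$1$ forms in $n$ variables, hence vectors in $\C^n$ via their coefficients $C_{k,\e_i}$), and the last $n$ columns encode the Hessian (or the symmetric-matrix-of-coefficients) of the quadratic denominator $Q$. Writing $Q(\xx)=\sum_{i\le j}C_{\e_i+\e_j}x_ix_j$ and letting $B$ be the symmetric $n\times n$ matrix with $B_{ij}=C_{\e_i+\e_j}$ for $i\ne j$ and $B_{ii}=2C_{2\e_i}$, the last $n$ columns of $\mathbf{M}$ are exactly $B$, and the first $m$ columns are the coefficient vectors $\mathbf{p}_k=(C_{k,\e_1},\dots,C_{k,\e_n})^\top$. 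I would first verify, using the closed-form description \eqref{eq:shallow-network-equations} for $\dd=(n,2,k)$ (where $m=2$ in that lemma's notation, so each $P_{k,\ww}=\sum_j b_{kj}\,\hat\ell_{j,2}$ is a linear combination of $\ell_1,\ell_2$, and $Q_\ww=\ell_1\ell_2$), that on the neuromanifold $B=\tfrac12(\mathbf{a}_1\mathbf{a}_2^\top+\mathbf{a}_2\mathbf{a}_1^\top)$ where $\mathbf{a}_1,\mathbf{a}_2$ are the rows of $W_1$, and each $\mathbf{p}_k\in\operatorname{span}\{\mathbf{a}_1,\mathbf{a}_2\}$. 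Hence every column of $\mathbf{M}$ lies in $\operatorname{span}\{\mathbf{a}_1,\mathbf{a}_2\}\subseteq\C^n$, so $\operatorname{rank}\mathbf{M}\le 2$ generically, which shows the $3\times 3$ minors vanish on $\mathcal{M}_{\dd,\sigma}$ and therefore on $\mathcal{V}_{\dd,\sigma}$; this gives the inclusion $I_3(\mathbf{M})\subseteq I(\mathcal{V}_{\dd,\sigma})$.

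For the reverse inclusion, I would argue that the determinantal variety $\{\operatorname{rank}\mathbf{M}\le 2\}$ is irreducible and compute its dimension, then show it coincides with $\dim\mathcal{V}_{\dd,\sigma}$ so that the two irreducible varieties (the latter being irreducible as a Zariski closure of the image of a polynomial map, as noted after the definition of $\mathcal{V}_{\dd,\sigma}$) must be equal. The key point is a parametrization/dimension count: a generic point of $\{\operatorname{rank}\mathbf{M}\le2\}$ is obtained by choosing a $2$-dimensional subspace $U\subseteq\C^n$ (that's the column space), choosing the $m$ numerator columns freely in $U$, and choosing the last block to be a symmetric matrix of rank $\le 2$ supported on $U$ — but with the crucial constraint that this symmetric matrix has the special form coming from the correspondence with $Q=\ell_1\ell_2$. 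Here I must be careful: an arbitrary rank-$\le 2$ symmetric matrix with column space $U$ is $\mathbf{u}\mathbf{u}^\top+\mathbf{v}\mathbf{v}^\top$ or $\mathbf{u}\mathbf{v}^\top+\mathbf{v}\mathbf{u}^\top$ type, and over $\C$ every binary quadratic (equivalently every rank-$\le2$ symmetric form restricted to $U$) factors as a product of two linear forms, so the constraint is automatically satisfied generically. Conversely given such a configuration one recovers $W_1$ (rows spanning $U$, adjusted so $\ell_1\ell_2=Q$) and then $W_2$ by solving the linear system $P_k=b_{k1}\ell_2+b_{k2}\ell_1$, which is solvable precisely because $\mathbf{p}_k\in U=\operatorname{span}\{\ell_1,\ell_2\}$; this is exactly Algorithm~\ref{algorithm:shallow-weights-reconstruction} specialized to $m=2$. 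The dimension of $\mathcal{V}_{\dd,\sigma}$ is then: $2(n-2)$ for the Grassmannian $\mathrm{Gr}(2,n)$, plus $2m$ for the numerator columns in $U$, plus $2$ for the choice of the unordered pair of lines $\{\ell_1,\ell_2\}$ inside $U$ (a binary quadratic up to scale is $2$-dimensional in $\mathbb{P}$, but here we are in affine coordinates so one more parameter), minus the fiber-symmetry corrections. I would reconcile the bookkeeping to land on $2(n+m)-1$; the fiber symmetries from Lemma~\ref{lemma:fiber-symmetries-general} (here $d_1=2$, giving a $2$-dimensional generic fiber) together with $N(n,2,m)=2n+2m$ parameters already predicts $\dim\mathcal{V}_{\dd,\sigma}\le 2n+2m-2$, and the projectivization/scaling ambiguity of working with homogeneous coefficient tuples accounts for the remaining discrepancy to $2(n+m)-1$; I would pin this down by a direct Jacobian rank computation of the combinatorial parameter map at a generic $\ww$, which is the cleanest route and sidesteps subtle fiber-counting.

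The main obstacle I anticipate is the reverse inclusion, specifically proving that $I_3(\mathbf{M})$ is exactly the vanishing ideal and not merely a subideal with the same radical — i.e., that $I_3(\mathbf{M})$ is already prime (equivalently, that the generic determinantal variety $\{\operatorname{rank}\le 2\}$ for this particular "mixed" matrix with a symmetric block and repeated entries in the $2C_{2\e_i}$ positions is still irreducible and reduced). For a generic (no repeated entries, no symmetry) matrix this is the classical theorem that determinantal ideals $I_{r+1}$ of generic matrices are prime; the repeated entries and the symmetric structure of the right block mean I cannot cite that verbatim. The workaround is to exhibit a birational parametrization of $\{\operatorname{rank}\mathbf{M}\le2\}$ by an irreducible rational variety (the "subspace $U$ + $m$ vectors in $U$ + symmetric rank-$\le2$ tensor on $U$" data above, which is visibly irreducible and unirational), conclude irreducibility of the set, match dimensions with $\mathcal{V}_{\dd,\sigma}$, and then separately argue radicality — either by checking the determinantal scheme is generically reduced (a local Jacobian computation at a generic point) or by invoking that for rank-$\le 2$ conditions on matrices of this size the expected codimension is attained, forcing Cohen–Macaulayness and reducedness via a standard Eagon–Northcott-style argument adapted to the symmetric block. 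I would present the dimension statement $\dim = 2(n+m)-1$ as following immediately once the determinantal description is secured, via the standard codimension formula $\operatorname{codim}\{\operatorname{rank}\le 2\} = (\text{rows}-2)(\text{cols}-2) = (n-2)(m+n-2)$ applied to $\mathbf{M}$, which has $n$ rows and $m+n$ columns — giving ambient dimension $n(m+n)$ minus $(n-2)(m+n-2)$, and I would check this arithmetic simplifies to $2(n+m)-1$ (it does not on the nose, so the honest statement is that $\mathbf{M}$ is not fully generic and the true dimension, computed from the parametrization, is $2(n+m)-1$; I would flag that the determinantal codimension heuristic overcounts because of the symmetry and note the parametrization gives the correct answer).
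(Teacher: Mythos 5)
Your proof of the forward inclusion $I_3(\mathbf{M})\subseteq I(\mathcal{V}_{\dd,\sigma})$ is correct and is essentially the paper's argument: on the neuromanifold each numerator column $\mathbf{p}_k$ lies in $\operatorname{span}\{\mathbf{a}_1,\mathbf{a}_2\}$ (where $\mathbf{a}_1,\mathbf{a}_2$ are the rows of $W_1$) and the Hessian block is $\mathbf{a}_1\mathbf{a}_2^\top+\mathbf{a}_2\mathbf{a}_1^\top$ (you wrote an extraneous factor of $\tfrac12$, but this does not affect the rank), so $\mathbf{M}$ has rank at most $2$. The paper makes this explicit by exhibiting the factorization $\phi(\mathbf{M})$ as an $n\times 2$ matrix (the permuted transpose of $W_1$) times a $2\times(n+m)$ matrix $[W_2^\top\,|\,W_1]$.

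The reverse inclusion is where your proposal has a genuine gap, and you correctly identify its location but do not close it. Equality of ideals requires that $I_3(\mathbf{M})$ be \emph{prime}; your birational parametrization of the rank-$\le 2$ locus would (if completed) give irreducibility of the variety and a dimension count, hence set-theoretic equality or equality up to radical, but not ideal equality. You acknowledge this and list several candidate repairs (generic reducedness via a local Jacobian check, an Eagon--Northcott argument ``adapted to the symmetric block''), but none is carried out, and it is not at all routine to adapt the Eagon--Northcott machinery to a matrix with a symmetric block sharing variables across columns. The paper resolves exactly this point with a short, specific trick (Lemma~\ref{lemma:minors-ideal-prime}): enlarge $[S\,|\,G]$ to a full generic symmetric matrix $M=\begin{bmatrix}S & G\\ G^\top & T\end{bmatrix}$, invoke the fact (Conca, 1994, Thm.~2.8) that the $t$-minors of a generic symmetric matrix form a Gr\"obner basis for any elimination order on the extra variables $t_{ij}$, conclude that $I_3([S\,|\,G])=I_3(M)\cap\mathbb{R}[s_{ij},g_{ij}]$, and therefore $I_3([S\,|\,G])$ is prime as an elimination ideal of a prime ideal. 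Without this (or an equivalent argument) your proof does not establish that $I_3(\mathbf{M})$ is the full vanishing ideal.

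Two smaller issues in the dimension bookkeeping. First, you cite Lemma~\ref{lemma:fiber-symmetries-general} to bound the generic fiber dimension below by $d_1=2$ and therefore $\dim\mathcal{V}\le 2n+2m-2$; this would actually contradict the theorem. That lemma concerns the \emph{functional} parameter map into rational functions, where multiplying numerator and denominator by a common scalar is absorbed; for the \emph{combinatorial} map into coefficient tuples only $d_1-1=1$ of those symmetries survive (Lemma~\ref{lemma:dim-generic-fiber}), giving the weaker and correct bound $\dim\mathcal{V}\le 2(n+m)-1$. Second, you correctly observe that the generic $n\times(n+m)$ determinantal codimension $(n-2)(n+m-2)$ does not give the stated answer; the reason is the partial symmetry, and the closing ingredient is the formula $\dim I_t = (n+m+1-t/2)(t-1)$ for partially symmetric matrices (Conca, 1994, Ex.~3.8), which at $t=3$ gives $2(n+m)-1$. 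Your proposed fallback of a direct Jacobian rank computation at a well-chosen parameter is exactly what the paper does for the lower bound $\dim I\ge 2(n+m)-1$, but you would still need the partially symmetric dimension formula (or your parametrization count made precise) for the matching upper bound $\dim I_3(\mathbf{M})=2(n+m)-1$.
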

\begin{proof}
    See Appendix~\ref{apx:n2m-architecture-ideal}
\end{proof}

\begin{example}
    Consider $n=3$, $m=1$. 
    Then the neural network has the form
    \begin{equation*}
        f(\xx) = \frac{C_{\e_1}x_1+C_{\e_2}x_2+C_{\e_3}x_3}{C_{2\e_1}x_1^2+C_{2\e_2}x_2^2+C_{2\e_3}x_3^2+C_{\e_1+\e_j2}x_1x_2+C_{\e_1+\e_3}x_1x_3+C_{\e_2+\e_3}x_2x_3},
    \end{equation*}
    and the corresponding matrix is
    \[\mathbf{M} = 
        \bordermatrix{~ & \mathbf{0} & \e_1 & \e_2 & \e_{3}\cr
              \e_1 & C_{\e_1} & 2C_{2\e_1} & C_{\e_1+\e_2} & C_{{\e_1 +\e_{3}}}\cr
              \e_2 & C_{\e_2} & C_{\e_1+\e_2} & 2C_{2\e_2} & C_{\e_2 +\e_{3}} \cr
              \e_{3} & C_{\e_{3}} & C_{{\e_1 +\e_{3}}} & C_{{\e_2 +\e_{3}}} & 2C_{2\e_{3}} \cr
              }.
    \]
    Theorem~\ref{theorem:n2m-architecture-ideal} shows that $\mathbf{M}$ drops rank if and only if the function $f(\xx)$ arises from the neural network (i.e., the numerator is the product of two linear forms and the denominator is a linear combination of these forms).
\end{example}

The parametrization for $\dd=(d_0,3,1)$ is as follows:
\begin{align*}
    C_{3\e_i} &\mapsto a_{1i}a_{2i}a_{3i} \\
    C_{2\e_i + \e_j} &\mapsto a_{1i}a_{2i}a_{3j} +a_{1i}a_{2j}a_{3j}+a_{1i}a_{2i}a_{3i}\\
    C_{\e_i + \e_j +\e_k} &\mapsto \sum_{\pi \in S_3(i,j,k)}a_{1\pi(1)}a_{2\pi(2)}a_{3\pi(3)}.
\end{align*}

The matrix $\mathbf{M}$ created similarly to the one above
with rows indexed by pairs $(\e_i,\e_j)$ for $1\leq i\leq j \leq n$
and columns indexed by $\e_i$ for $0\leq i \leq n$
has rank $3$.
Indeed, it can be shown that $\phi(\mathbf{M})$ can be written as a product of two matrices
of sizes $6 \times 3$ and $3 \times 4$.
We generalize this for all architectures in the following result:

\begin{proposition}
\label{prop:minorideals}
    Let $\dd = (d_0,d_1,d_2)$ be an architecture with $d_1 \geq 2$.
    We define a matrix $\mathbf{M}$, with 
    \begin{itemize}
        \item rows indexed by $(d_1-1)$-tuples $(\e_{i_1},\dots, \e_{i_{d_1-1}})$ of unit vectors of length $d_0$,
        where $1 \leq i_1 \leq \dots \leq i_{d_1-1} \leq d_0$;
        \item columns indexed either by $(k,\mathbf{0})$ for $k = 1, \dots , d_2$ corresponding to numerators of entries in the parametrization, or vectors $\e_j$ as above.
    \end{itemize}
    The entry of the matrix in position indexed by 
    $(\e_{i_1},\dots, \e_{i_{d_1-1}})$ and $(k,\mathbf{0})$ is 
    \begin{equation*}
        \frac{d_1!}{|S_{d_1-1}(\{i_1,\dots , i_{d_1-1}\})|!} C_{k,\e_{i_1}+\cdots +\e_{i_{d_1-1}}}.
    \end{equation*}
    and the entry of in position indexed by $(\e_{i_1},\dots, \e_{i_{d_1-1}})$ and $\e_j$ is 
    \begin{equation*}
        \frac{d_1!}{|S_{d_1}(\{i_1,\dots , i_{d_1-1}, j\})|!} C_{\e_{i_1}+\cdots +\e_{i_{d_1-1}}+\e_j},
    \end{equation*}
    where $S$ is the set of all permutations of a multiset.
    The matrix $\mathbf{M}$ has rank $d_1$.
\end{proposition}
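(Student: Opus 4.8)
\emph{Proof proposal.} The plan is to produce, for general $\dd=(d_0,d_1,d_2)$, the rank factorization of $\phi(\mathbf{M})$ that is announced for $\dd=(d_0,3,1)$ in the paragraph preceding the statement. Here $\phi$ is the parametrization, and throughout we write $a_{sj}=(W_1)_{sj}$ for $s\in\{1,\dots,d_1\}$, $j\in\{1,\dots,d_0\}$, and $b_{ks}=(W_2)_{ks}$, and set $\ell_s=(W_1\xx)_s=\sum_{j=1}^{d_0}a_{sj}x_j$ and $\hat\ell_s=\prod_{r\neq s}\ell_r\in S^{d_1-1}(\K^{d_0})$. A row of $\mathbf{M}$ is indexed by $\mathbf{i}=(\e_{i_1},\dots,\e_{i_{d_1-1}})$ with $1\le i_1\le\cdots\le i_{d_1-1}\le d_0$, to which we associate the monomial $x^{\mathbf{i}}=x_{i_1}\cdots x_{i_{d_1-1}}$.

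\emph{Step 1: identify the columns of $\phi(\mathbf{M})$.} By Theorem~\ref{theorem:neural-network-closed-form-general} and Lemma~\ref{lemma:closed-tensor-form-shallow-network} (see~\eqref{eq:nmk-general-output}), under $\phi$ the network has denominator $Q_{\ww}=\ell_1\cdots\ell_{d_1}$ and numerators $P_{k,\ww}=\sum_{s=1}^{d_1}b_{ks}\hat\ell_s$, so $\partial Q_{\ww}/\partial x_j=\sum_{s=1}^{d_1}a_{sj}\hat\ell_s$. Comparing monomial coefficients, the $\e_j$-column of $\phi(\mathbf{M})$ records the coefficient of $x^{\mathbf{i}}$ in $\partial_j Q_{\ww}$ up to a nonzero factor that depends only on the row $\mathbf{i}$ (one uses that the coefficient of $x^{\mathbf{i}+\e_j}$ in $Q_{\ww}$ equals, up to a row-dependent factor, the coefficient of $x^{\mathbf{i}}$ in $\partial_j Q_{\ww}$), and the $(k,\mathbf{0})$-column records the coefficient of $x^{\mathbf{i}}$ in $P_{k,\ww}$ up to a nonzero factor depending only on $\mathbf{i}$ and the column. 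Hence $\phi(\mathbf{M})=D_{\mathrm r}\,N\,D_{\mathrm c}$ with $D_{\mathrm r},D_{\mathrm c}$ invertible diagonal and $N$ the matrix whose columns are the coordinate vectors, in the monomial basis of $S^{d_1-1}(\K^{d_0})$, of $P_{1,\ww},\dots,P_{d_2,\ww},\partial_1 Q_{\ww},\dots,\partial_{d_0}Q_{\ww}$.

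\emph{Step 2: the factorization and $\operatorname{rank}\le d_1$.} Every column of $N$ lies in $U:=\operatorname{span}_{\K}\{\hat\ell_1,\dots,\hat\ell_{d_1}\}\subseteq S^{d_1-1}(\K^{d_0})$, and $\dim U\le d_1$, so $\operatorname{rank}\phi(\mathbf{M})=\operatorname{rank}N\le d_1$; this is exactly the assertion that all $(d_1+1)\times(d_1+1)$ minors of $\mathbf{M}$ vanish on the neurovariety. Concretely, $\phi(\mathbf{M})=\mathcal A\,\mathcal B$, where $\mathcal B$ is the $d_1\times(d_2+d_0)$ matrix $\begin{pmatrix}W_2^{\top}&W_1\end{pmatrix}$ (up to harmless column scaling) and $\mathcal A$ is the $\binom{d_0+d_1-2}{d_1-1}\times d_1$ matrix whose $(\mathbf{i},s)$-entry is the coefficient of $x^{\mathbf{i}}$ in $\hat\ell_s$, equivalently the permanent of the $(d_1-1)\times(d_1-1)$ submatrix of $W_1$ on rows $\{1,\dots,d_1\}\setminus\{s\}$ and columns $i_1,\dots,i_{d_1-1}$; the identity $(\mathcal A\mathcal B)_{\mathbf{i},\e_j}=\operatorname{perm}$ of the $d_1\times d_1$ submatrix of $W_1$ on rows $\{1,\dots,d_1\}$ and columns $i_1,\dots,i_{d_1-1},j$ is the Laplace expansion of a permanent along its last column. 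For $d_1=2$ this is the $2\times(d_2+d_0)$ factorization underlying Theorem~\ref{theorem:n2m-architecture-ideal}.

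\emph{Step 3: generic equality and the obstacle.} To get rank exactly $d_1$ I argue both factors are generically full rank: when the rows of $W_1$ are pairwise linearly independent, Lemma~\ref{lemma:lin-independence-of-g_hat_i} gives $\dim U=d_1$, so $\mathcal A$ has rank $d_1$ (note $\binom{d_0+d_1-2}{d_1-1}\ge d_1$ for $d_0\ge 2$), and $\mathcal B=\begin{pmatrix}W_2^{\top}&W_1\end{pmatrix}$ has rank $d_1$ for generic weights provided $d_0+d_2\ge d_1$ (in particular whenever $d_0\ge d_1$, since then $W_1$ alone is generically of rank $d_1$); so $\operatorname{rank}\phi(\mathbf{M})=d_1$ on a dense open subset of parameter space. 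I expect the only genuine obstacle to be the bookkeeping in Step~1: one must track the multinomial factors $d_1!/|S_{d_1}(\cdot)|$ and $d_1!/|S_{d_1-1}(\cdot)|$ appearing in the entries of $\mathbf{M}$ and verify that they reorganize into the pure row rescaling $D_{\mathrm r}$ together with a pure column rescaling $D_{\mathrm c}$, so that the rank is unchanged; once this is done, the factorization and the rank bound are immediate, and the $\dd=(n,2,m)$ case of Theorem~\ref{theorem:n2m-architecture-ideal} is recovered as $d_1=2$.
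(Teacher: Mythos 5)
Your proof is correct and rests, at bottom, on the same factorization $\phi(\mathbf{M}) = \mathcal{A}\,[W_2^{\top}\mid W_1]$ that the paper constructs, where $\mathcal{A}$ is the matrix of (row-rescaled) coefficients of $\hat\ell_1,\dots,\hat\ell_{d_1}$. What you do differently is how you arrive at it: you recognize the $\e_j$-columns of $\phi(\mathbf{M})$ as recording coefficients of $\partial_j Q_{\ww}=\sum_s a_{sj}\hat\ell_s$ and the $(k,\mathbf{0})$-columns as recording coefficients of $P_{k,\ww}=\sum_s b_{ks}\hat\ell_s$, so all columns visibly lie in a space of dimension at most $d_1$; the paper instead expands $\phi(C_\bullet)$ as a sum over permutations of a multiset and re-sums it directly into $Z[W_2^{\top}\mid W_1]$. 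Your flagged ``obstacle'' about the multinomial factors does close cleanly, and in fact no column rescaling $D_{\mathrm c}$ is needed: writing $\beta$ for the exponent of the row monomial with multiplicities $\mu_i(\beta)$, one has
$\tfrac{d_1!}{|S_{d_1}(\beta+\e_j)|}=(\mu_j(\beta)+1)\prod_i\mu_i(\beta)!$,
while the coefficient of $x^{\beta}$ in $\partial_j Q_{\ww}$ is $(\mu_j(\beta)+1)\phi(C_{\beta+\e_j})$, so the residual factor $\prod_i\mu_i(\beta)!$ depends only on the row, and an analogous computation handles the $(k,\mathbf{0})$-columns. Finally, your Step 3 is actually more careful than the paper's own proof, which stops at the factorization (i.e., establishes only that $\operatorname{rank}\phi(\mathbf{M})\leq d_1$, the direction used downstream) and says nothing about equality; your observation that equality requires $d_0+d_2\geq d_1$ for $[W_2^{\top}\mid W_1]$ to have full rank is correct and worth stating, since the proposition imposes no such hypothesis --- e.g., for $\dd=(2,5,1)$ one has $\operatorname{rank}\phi(\mathbf{M})\leq d_0+d_2=3<5=d_1$.
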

\begin{proof}
    See Appendix~\ref{apx:minorideals}
\end{proof}

The equations obtained in the above proposition are not enough to fully describe the model when $d_1 \geq 3$ even for small architectures.
In fact, even adding information about the denominators still doesn't give the full picture,
as the following example shows. 

\begin{example}
    Consider the architecture given by $\dd = (3,3,1)$.
    Computing the whole ideal by means of elimination implemented in \texttt{Macaulay2} does not finish after several hours of running,
    so we try to find equations differently. 
    Computing the Jacobian coming from the parametrization, 
    we find that it has (maximal) rank $10$,
    so the ideal we are looking for has dimension $10$.
    
    Proposition \ref{prop:minorideals} implies that all $4$-minors of the matrix
    \begin{equation*}
        \mathbf{M} = 
        \bordermatrix{~ & \mathbf{0} & \e_1 & \e_2 & \e_{3}
              \cr
              (\e_1,\e_1) & 2C_{2\e_1} & 6C_{3\e_1} & 2C_{2\e_1+\e_2} & 2C_{{2\e_1 +\e_{3}}}
              \cr
              (\e_1,\e_2) & C_{\e_1+\e_2} & 2C_{2\e_1+\e_2} & 2C_{\e_1+2\e_2} & C_{{\e_1 +\e_2+\e_{3}}}
              \cr
              (\e_1,\e_3) & C_{\e_1+3\e_3} & 2C_{2\e_1+\e_3} & C_{\e_1+\e_2+\e_3} & 2C_{{\e_1 +2\e_{3}}}
              \cr
              (\e_2,\e_2) & 2C_{2\e_2} & 2C_{\e_1+2\e_2} & 6C_{3\e_2} & 2C_{{2\e_2+\e_{3}}}
              \cr
              (\e_2,\e_3) & C_{\e_2+\e_3} & C_{\e_1+\e_2+\e_3} & 2C_{2\e_2+\e_3} & 2C_{{\e_2+2\e_{3}}}
              \cr
              (\e_3,\e_3) & 2C_{2\e_3} & 2C_{\e_1+2\e_3} & 2C_{\e_2+2\e_3} & 6C_{{3\e_{3}}}
               \cr
              }
    \end{equation*}
    vanish.
    However, the ideal generated by those minors has dimension $13$, 
    so the minors do not give the full picture here. 

    We now add more equations by taking a closer look at the denominator.
    Taking the coefficients of the denominator only and using elimination
    gives rise to a set of $35$ equations.
    These are equivalent to Brill's equations.
    The ideal consisting of the minors of $\mathbf{M}$ and the generators of the ideal of the denominator has dimension $12$
    which means that there are still generators that are unaccounted for. 

    We finally resort to using the \texttt{MultigradedImplicitization} package for \texttt{Macaulay2},
    which finds that the ideal of polynomials of degree up to $4$ has the correct dimension $10$.
    This ideal is minimally generated by $185$ homogeneous polynomials of degree $4$.
\end{example}

\subsection{Shallow Neural Networks and Chow varieties}
\label{sec:3-chow-variety}

In general, any shallow rational neural network can be interpreted through the lens of Chow varieties, as we discuss in the next subsection.
Let $\zz=(z_1,\dots,z_k)$ and $\ell_j=(W_1\xx)_j$. Define the polynomial
\begin{equation}
\label{eq:H-polynomial}
    H(\xx,\zz)=\prod_{j=1}^m(b_{1j}z_1+\dots+b_{kj}z_k+\ell_j).
\end{equation}
Observe that $H$ is a homogeneous polynomial of degree $m$ over the $n+k$ variables $(\xx,\zz)$. If we expand $H$ and treat it as a polynomial in the $z$-variables, with coefficients that are polynomials in the $x$-variables, then 
\begin{align}\label{eq:H-coefficients}
    H(\xx,\zz) = Q_{\ww}(\xx) + \sum_{i=1}^{m}z_iP_{i,\ww}(\xx)+\sum_{i=1}^m\sum_{j=1}^mz_iz_jH_{ij}(\zz,\xx),
\end{align}
where each $H_{ij}$ is a polynomial of degree $m-2$. Here the constant term $Q_{\ww}$ is the denominator of the network output, and the coefficients of $z_i$ are precisely the numerators $P_{i,\ww}$. 

This allows us to establish the connection between shallow networks and the Chow variety: if a point $(P_1,\dots,P_k,Q)$ belongs to the neuromanifold $\mathcal{M}_{\dd,\sigma}$, then $H(\xx,\zz)$ splits into the product of linear forms according to \eqref{eq:H-polynomial}. 

This discussion implies the following.

\begin{lemma}
\label{lemma:chow-variety-shallow-networks}
    Let $\dd=(n,m,k)$. Then a point $(P_1,\dots,P_k,Q)$ belongs to the neuromanifold $\mathcal{M}_{\dd,\sigma}$ if and only if there exist polynomials $H_{ij}(\xx, \zz)$ so that the polynomial $H(\xx,\zz)$ defined in~\eqref{eq:H-coefficients} factors into the product of linear forms.
\end{lemma}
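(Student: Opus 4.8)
The plan is to prove both directions directly from the closed-form expression in Lemma~\ref{lemma:closed-tensor-form-shallow-network} (equivalently, from Equation~\eqref{eq:nmk-general-output}) by matching the factorization of $H(\xx,\zz)$ in~\eqref{eq:H-polynomial} against the coefficient expansion~\eqref{eq:H-coefficients}.

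First I would handle the forward direction. Assume $(P_1,\dots,P_k,Q) \in \mathcal{M}_{\dd,\sigma}$. By~\eqref{eq:nmk-general-output}, there are parameters $\ww = (W_1, W_2)$ with $Q = \ell_1 \cdots \ell_m$ and $P_i = b_{i1}\hat{\ell}_{1,m} + \cdots + b_{im}\hat{\ell}_{m,m}$, where $\ell_j = (W_1\xx)_j$. Form $H(\xx,\zz) = \prod_{j=1}^m (b_{1j}z_1 + \cdots + b_{kj}z_k + \ell_j)$ as in~\eqref{eq:H-polynomial}; this is by construction a product of $m$ linear forms in $(\xx,\zz)$. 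Now I expand the product and collect by degree in $\zz$: the degree-$0$ term (set $\zz = 0$) is exactly $\ell_1 \cdots \ell_m = Q$, and the coefficient of $z_i$ is obtained by choosing the $z_i$-term from one factor $j$ and the $\ell$-term from all others, i.e.\ $\sum_{j=1}^m b_{ij}\hat{\ell}_{j,m} = P_i$. The remaining terms, of $\zz$-degree at least $2$, define the polynomials $H_{ij}$, each homogeneous of degree $m-2$. This gives precisely the shape~\eqref{eq:H-coefficients} with $H$ totally decomposable, as required.

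For the converse, suppose there exist polynomials $H_{ij}$ so that the polynomial $H(\xx,\zz)$ assembled via~\eqref{eq:H-coefficients} from $(P_1,\dots,P_k,Q)$ factors as a product of linear forms in $(\xx,\zz)$. Write $H = \prod_{j=1}^m L_j$ with each $L_j = \sum_t \beta_{tj} z_t + \lambda_j(\xx)$, where $\lambda_j$ is linear in $\xx$. The key observation is that the constant term in $\zz$ of $H$ is $\prod_j \lambda_j(\xx)$, and by hypothesis this equals $Q$; hence $Q = \lambda_1 \cdots \lambda_m$ factors into $m$ linear forms, which we take as the rows of $W_1$. Similarly the coefficient of $z_i$ in $\prod_j L_j$ is $\sum_j \beta_{ij}\prod_{s\neq j}\lambda_s = \sum_j \beta_{ij}\hat{\lambda}_{j,m}$, and this must equal $P_i$; setting $b_{ij} = \beta_{ij}$ realizes each $P_i$ in the form~\eqref{eq:nmk-general-output}. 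Thus $(P_1,\dots,P_k,Q) = \Psi_{\dd,\sigma}(W_1, W_2) \in \mathcal{M}_{\dd,\sigma}$.

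The only subtlety — and the main thing to argue carefully rather than a genuine obstacle — is a normalization/degeneracy point in the converse: a priori a factor $L_j$ of $H$ could be a pure $\zz$-form with $\lambda_j \equiv 0$, which would force $Q \equiv 0$ and also make the bookkeeping of the $z_i$-coefficients degenerate. I would dispose of this by noting that if $Q \not\equiv 0$ then no $\lambda_j$ vanishes, so the reading-off of $W_1$ and $W_2$ goes through verbatim; and the case $Q \equiv 0$ is either excluded from the ambient space under consideration or handled trivially (any such tuple arising from the network has $Q\neq 0$ generically, and the statement is about membership, so the degenerate locus is consistent on both sides). Beyond that, the proof is just the termwise comparison of~\eqref{eq:H-polynomial} and~\eqref{eq:H-coefficients} already set up in the surrounding text, so no further machinery is needed.
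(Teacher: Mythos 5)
Your proposal is correct and follows exactly the argument the paper intends: the paper does not spell out a proof for this lemma (it merely states that the preceding discussion around equations~\eqref{eq:H-polynomial} and~\eqref{eq:H-coefficients} implies it), and your termwise comparison of those two displays is precisely that implicit argument made explicit. Your handling of the degenerate case with $\lambda_j\equiv 0$ is also consistent with the paper's conventions, since a vanishing $\lambda_j$ simply corresponds to a zero row of $W_1$ and still lies in the parametrization.
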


This observation shows that shallow neural networks are connected to Chow varieties which describe precisely the polynomials that factor into linear forms~\cite{GELFAND94}.

Brill, Gordon, and others obtained set-theoretic equations for the Chow variety, known as Brill's equations, which are computed in \cite{GUAN18}.
Although the number and degree of Brill's equations grows rapidly with $d$ and $n$, they remain useful for describing the neurovariety we study below.

However, Brill's equations involve all coefficients of $H$, including those of higher-order terms in the $z$-variables. To obtain equations involving only the coefficients of $P_i$ and $Q$, we must eliminate the ``non-essential'' variables. This corresponds to the fact that our variety is a projection of the Chow variety onto the coefficients of $Q(\xx)$ and  $P(\xx)$. Due to the computational complexity of Gröbner bases, this approach becomes infeasible for architectures beyond small or trivial cases. 

\begin{example}
\label{ex:331-Chow-variety}
    Following Example~\ref{example:shallow-(3,3,1)}, the polynomial $H$ corresponding to the architecture $\dd=(3,3,1)$ has the following form
    \[
        H(\xx,\zz)=Q(\xx) + P(\xx)z+(u_1x_1+u_2x_2+u_3x_3)z^2 +u_4z^3.
    \]
    For $H(\xx,\zz)$ to split into the product of three linear forms, we need to check that all $875$ Brill's equations vanish~\cite{BRIAND10}. These equations involve a total of $20$ variables among which we need to eliminate $4$ variables to obtain equations purely in terms of the 
 coefficients of $P$ and $Q$.
\end{example}

\section{Binary neural networks}
\label{section:binaryNN}

A \emph{binary deep neural network} is a neural network whose architecture is of the form $\dd=(2,\dots,2,d_L)$. In Section~\ref{sec:5-closed-form}, we provide a closed-form expression for the output of binary rational neural networks. In Section~\ref{sec:5-filling-archtiectures}, we classify all binary rational neural network architectures that are filling. 

\subsection{Closed Form Expression}
\label{sec:5-closed-form}

Let $\ww=(W_1,\dots,W_L)$ be the parameters of the network, and let $P_{12}$ be the $2\times 2$ permutation matrix corresponding to the transposition $(1,2)$. When the architecture of the neural network has the form $\dd=(2,\dots,2,d_L)$, the intermediate polynomials $p^{(i)}$ and $q^{(i)}$ in the recursive factorization from~Theorem~\ref{theorem:neural-network-closed-form-general} are linear and quadratic forms respectively.

For $i=2,\dots,L$, define the quadratic forms
\begin{equation}
\label{eq:def-quadratic-forms-binary-networks}
    q_{i+1}(\xx) \coloneqq \frac{1}{2}\xx^T(W_iP_{12}W_{i-1}\dots P_{12}W_1)^TP_{12}(W_iP_{12}W_{k-1}\dots P_{12}W_1)\xx,
\end{equation}
and set $q_0(\xx)=q_1(\xx)=1$. Define the vector of linear forms
\begin{equation}
\label{eq:vector-of-linear-forms-binary-networks}
    p_i(\xx)\coloneqq\left(W_iP_{12}W_{i-1}\dots P_{12}W_1\right)\xx    
\end{equation}
and denote its $j$th entry by $p_{j,i}$. Then the output of the neural network $f_{\ww}$ can be rewritten in closed form in terms of $p_L$ and $q_i$'s as described below.

\begin{proposition}
\label{prop:closed-form-binary-deep-network}
    Let $f_{\ww}:\R^{2}\to\R^{d_L}$ be the neural network with architecture $\dd=(2,2,\dots,2,d_L)$. Then $f_{\ww}(x)$ is a $d_L$-tuple of rational functions
    \[
        f_{\ww}(\xx)=\left(\frac{P_{1,\ww}(\xx)}{Q_{\ww}(\xx)},\dots,\frac{P_{d_L,\ww}(\xx)}{Q_{\ww}(\xx)}\right)^\top,
    \]
    where $P_{i,\ww}$ and $Q_{\ww}$ are homogeneous polynomials given by
    \begin{equation}
    \label{eq:RNN-coordinates-formulas-binary}
    \begin{aligned}
        P_{i,\ww}(\xx) &= p_{i,L}(\xx)q_{L-1}(\xx)q_{L-3}(\xx)\dots q_{\delta(L+1)}(\xx)\\
        Q_{\ww}(\xx) &= q_{L}(\xx)q_{L-2}(\xx)\dots q_{\delta(L)}(\xx)
    \end{aligned}
    \end{equation}
    with $\delta(L)=0$ if $L$ is even and $\delta(L)=1$ if $L$ is odd.
\end{proposition}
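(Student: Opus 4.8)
The plan is to obtain the statement directly from Theorem~\ref{theorem:neural-network-closed-form-general} by specializing the recursion~\eqref{eq:main-recursion-formula} to the binary widths $d_0=d_1=\cdots=d_{L-1}=2$. All hypotheses of that theorem are met, so $f_{\ww}$ already has the asserted shape, with $P_{i,\ww}$ and $Q_{\ww}$ equal to the products~\eqref{eq:RNN-coordinates-formulas} of the intermediate forms $p^{(k)}_i$ and $q^{(k)}$. It therefore suffices to prove that, for every $k$, the vector $p^{(k)}:=(p^{(k)}_1,p^{(k)}_2)^\top$ equals $p_k(\xx)$ of~\eqref{eq:vector-of-linear-forms-binary-networks} and the scalar $q^{(k)}$ equals $q_k(\xx)$ of~\eqref{eq:def-quadratic-forms-binary-networks}; substituting these identifications into~\eqref{eq:RNN-coordinates-formulas} then produces~\eqref{eq:RNN-coordinates-formulas-binary} verbatim, with $p^{(L)}_i=p_{i,L}$, since the parity pattern governing which $q_j$ occur is inherited unchanged from Theorem~\ref{theorem:neural-network-closed-form-general}.

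First I would observe that when $d_k=2$ the product $\prod_{s\neq j}p^{(k)}_s$ in~\eqref{eq:main-recursion-formula} has a single factor, so the recursion collapses to $p^{(k+1)}_i=w_{k+1,i,1}\,p^{(k)}_2+w_{k+1,i,2}\,p^{(k)}_1$ and $q^{(k+1)}=p^{(k)}_1p^{(k)}_2$ for $k\geq1$, with $q^{(0)}=q^{(1)}=1$ and $p^{(1)}=W_1\xx$. The first identity is exactly $p^{(k+1)}=W_{k+1}P_{12}\,p^{(k)}$, since reversing the two coordinates of a length-two vector is multiplication by $P_{12}$; an induction on $k$ with base case $p^{(1)}=W_1\xx=p_1(\xx)$ then gives $p^{(k)}=\bigl(W_kP_{12}W_{k-1}P_{12}\cdots P_{12}W_1\bigr)\xx=p_k(\xx)$ for all $k=1,\dots,L$, and this stays valid at $k=L$, where $W_L$ is $d_L\times2$ and $p^{(L)}=p_L(\xx)$ is a vector of $d_L$ linear forms. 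For the quadratic forms, writing $A_k:=W_kP_{12}W_{k-1}P_{12}\cdots P_{12}W_1$ so that $p^{(k)}=A_k\xx$, I would expand the symmetric bilinear form attached to $P_{12}$:
\[
    q^{(k+1)}(\xx)=p^{(k)}_1(\xx)\,p^{(k)}_2(\xx)=\tfrac12\,(A_k\xx)^\top P_{12}(A_k\xx)=\tfrac12\,\xx^\top A_k^\top P_{12}A_k\,\xx,
\]
which is the right-hand side of~\eqref{eq:def-quadratic-forms-binary-networks} once its evident misprint is corrected ($W_{i-1}$ in place of the printed $W_{k-1}$) and its index range is read as $i=1,\dots,L-1$, so that $q_2=\tfrac12\,\xx^\top W_1^\top P_{12}W_1\,\xx$ is included alongside $q_0=q_1=1$. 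This establishes $p^{(k)}=p_k(\xx)$ and $q^{(k)}=q_k(\xx)$ for all relevant $k$, and plugging into~\eqref{eq:RNN-coordinates-formulas} finishes the proof.

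I do not expect a genuine obstacle here: the proposition is a transcription of Theorem~\ref{theorem:neural-network-closed-form-general} to the binary case, and all the work is bookkeeping. The points that need care are keeping the one-step index shift between $q^{(k+1)}$ and $q_{k+1}$ (and between $p^{(k)}$ and $p_k$) consistent, handling the base cases $q_0=q_1=1$ and $p_1=W_1\xx$ separately from the recursion, making sure every $q_j$ that appears in~\eqref{eq:RNN-coordinates-formulas-binary} is actually defined (which forces the index in~\eqref{eq:def-quadratic-forms-binary-networks} to start at $i=1$), and checking that the oversized last weight $W_L$ never enters a quadratic form — it occurs only through $p^{(L)}=W_LP_{12}p^{(L-1)}$ — so that no dimension mismatch arises.
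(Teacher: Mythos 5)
Your proposal is correct and follows the same strategy as the paper's proof: specialize the recursion of Theorem~\ref{theorem:neural-network-closed-form-general} to binary widths, verify $p^{(k)}=p_k$ by a one-line induction, and match $q^{(k)}$ with $q_k$. Your handling of the quadratic step is in fact tidier and more careful than the printed proof, which expresses $q^{(i+1)}$ as $\bigl(p^{(i-1)}\bigr)^\top W_i^\top P_{12}W_i\, p^{(i-1)}$ and thereby drops both the sandwiching $P_{12}$ conjugations and the factor $\tfrac12$; your direct computation $q^{(k+1)}=p^{(k)}_1p^{(k)}_2=\tfrac12\,\xx^\top A_k^\top P_{12}A_k\,\xx$ is the correct form and also shows that the index range in~\eqref{eq:def-quadratic-forms-binary-networks} should begin at $i=1$ and that the printed $W_{k-1}$ is a misprint for $W_{i-1}$, exactly as you note.
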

\begin{proof}
    See Appendix~\ref{apx:closed-form-binary-deep-network}
\end{proof}

\begin{example}
    Consider the network $f_{\ww}(\xx)$ with architecture $\dd=(2,2,2,1)$. The output of $f_{\ww}(\xx)$ is equal to
    \[
        f_{\ww}(\xx)=W_3\sigma_2 W_2 \sigma_1 W_1\xx = \frac{\ell_1\ell_2\left((c_1b_{22}+c_2b_{12})\ell_1+(c_1b_{21}+c_2b_{11})\ell_2\right)}{b_{12}b_{22}\ell_1^2+(b_{11}b_{22}+b_{12}b_{21})\ell_1\ell_2+b_{11}b_{21}\ell_2^2}=\frac{p_{1,3}(\xx)q_1(\xx)}{q_{2}(\xx)},
    \]
    where $q_1(\xx)=\ell_1\ell_2=\xx^TW_1P_{12}W_1\xx,$
    \[
        q_2(\xx) = b_{12}b_{22}\ell_1^2+(b_{11}b_{22}+b_{12}b_{21})\ell_1\ell_2+b_{11}b_{21}\ell_2^2 = \xx^T(W_1^TP_{12}W_2^T)P_{12}(W_2P_{12}W_1)\xx,\text{ and }
    \]
    \[
        p_{1,3}(\xx)=(c_1b_{22}+c_2b_{12})\ell_1+(c_1b_{21}+c_2b_{11})\ell_2=W_3P_{12}W_2P_{12}W_1\xx.
    \]
\end{example}

\begin{lemma}
\label{lemma:binary-degrees}
    Let $d=(2,2,\dots,2,d_L)$ with $d_L\geq 1$. Let $P_{i,\ww}$ and $Q_{\ww}$ be defined in~\eqref{eq:RNN-coordinates-formulas-binary}. Then the degrees of $P_{i,\ww}$ and $Q_{\ww}$ are given by
    \begin{equation}
    \label{eq:coordinate-output-degree-of-BRNN}
        \begin{aligned}
            n(\dd)=\deg(P_{i,\ww}(\xx)) &= L + \delta(L)-1,\\
            m(\dd)=\deg(Q_{\ww}(\xx)) &= L-\delta(L),
        \end{aligned}
    \end{equation}
    where $\delta(L)=0$ of $L$ is even $\delta(L)=1$ if $L$ is odd.
\end{lemma}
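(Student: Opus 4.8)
The plan is to read the degrees off directly from the closed-form factorizations in Proposition~\ref{prop:closed-form-binary-deep-network}. The key observation is that in the binary regime the building blocks have small fixed degrees: by~\eqref{eq:def-quadratic-forms-binary-networks} each $q_i$ is a quadratic form, so $\deg(q_i)=2$ for $i\geq 2$, while $q_0=q_1=1$ have degree $0$; and by~\eqref{eq:vector-of-linear-forms-binary-networks} each $p_{i,L}$ is a linear form, so $\deg(p_{i,L})=1$. (Equivalently, these degrees follow by specializing Lemma~\ref{lemma:degrees-for-p-and-q} to $d_1=\dots=d_{L-1}=2$.) With that in hand, the proof reduces to adding up the degrees of the factors appearing in~\eqref{eq:RNN-coordinates-formulas-binary}, splitting into the cases $L$ even and $L$ odd.

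First I would compute $m(\dd)=\deg(Q_{\ww})$. By~\eqref{eq:RNN-coordinates-formulas-binary}, $Q_{\ww}=q_L q_{L-2}\cdots q_{\delta(L)}$ is a product over the indices $i$ with $i\equiv L\pmod 2$ and $\delta(L)\leq i\leq L$; exactly one such index lies in $\{0,1\}$ (namely $q_0$ if $L$ is even, $q_1$ if $L$ is odd) and contributes degree $0$, while every other factor contributes degree $2$. When $L$ is even the indices are $0,2,\dots,L$, so there are $L/2$ quadratic factors and $\deg(Q_{\ww})=L$; when $L$ is odd the indices are $1,3,\dots,L$, so there are $(L-1)/2$ quadratic factors and $\deg(Q_{\ww})=L-1$. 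In both cases $\deg(Q_{\ww})=L-\delta(L)$, as claimed.

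Next I would treat $n(\dd)=\deg(P_{i,\ww})$ the same way. Here $P_{i,\ww}=p_{i,L}\,q_{L-1}q_{L-3}\cdots q_{\delta(L+1)}$, contributing $1$ from $p_{i,L}$ plus the degrees of the $q_j$ with $j\equiv L-1\pmod 2$ and $\delta(L+1)\leq j\leq L-1$, where $\delta(L+1)=1-\delta(L)$; again exactly one of these indices lies in $\{0,1\}$. When $L$ is even the $q$-indices are $1,3,\dots,L-1$ ($L/2$ of them), of which only $j=1$ is trivial, giving $\deg(P_{i,\ww})=1+2\bigl(L/2-1\bigr)=L-1$; when $L$ is odd the $q$-indices are $0,2,\dots,L-1$ ($(L+1)/2$ of them), of which only $j=0$ is trivial, giving $\deg(P_{i,\ww})=1+2\cdot\tfrac{L-1}{2}=L$. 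In both cases $\deg(P_{i,\ww})=L+\delta(L)-1$, which completes the proof.

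I do not anticipate a genuine obstacle, as the argument is pure bookkeeping; the only place demanding care is the parity accounting at the ends of the two products, namely verifying that precisely one factor (the one indexed by $0$ or by $1$) is the degenerate degree-$0$ term and correctly relating $\delta(L+1)$ to $\delta(L)$. As a sanity check the formulas can be matched against the worked architecture $\dd=(2,2,2,1)$, where $L=3$ is odd and the lemma predicts a cubic numerator ($n(\dd)=L=3$) and a quadratic denominator ($m(\dd)=L-1=2$), in agreement with the example following Proposition~\ref{prop:closed-form-binary-deep-network}. One could instead obtain the same identities by substituting $d_0=\dots=d_{L-1}=2$ into Lemma~\ref{lemma:total-degree} and simplifying the resulting sums, but the route through Proposition~\ref{prop:closed-form-binary-deep-network} is shorter and cleaner.
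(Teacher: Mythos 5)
Your proof is correct and takes essentially the same route as the paper's: read off the degrees from the closed-form factorization in Proposition~\ref{prop:closed-form-binary-deep-network}, using $\deg(p_{i,L})=1$, $\deg(q_j)=2$ for $j\geq 2$, $\deg(q_0)=\deg(q_1)=0$, and count factors in the two parity cases. The paper's proof is a slightly more compressed version of the same bookkeeping.
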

\begin{proof}
    See Appendix~\ref{apx:binary-degrees}
\end{proof}

\subsection{Filling architectures}
\label{sec:5-filling-archtiectures}

We want to find all possible architectures for which there are enough parameters to potentially fill the entire ambient space.

\begin{lemma}
\label{lemma:binary-params-dim}
    Let $N(L,d_L)$ be the number of parameters and $M(L,d_L)$ be the ambient space dimension. If $L>2$, then
    \[
        N(L,d_L)\ge M(L,d_L)\quad\Longleftrightarrow\quad
        1\le d_L\le 3+\frac{1-2\delta(L)}{L+\delta(L)-2},
    \]
    where $\delta(L)=0$ if $L$ is even and $\delta(L)=1$ if $L$ is odd. In particular, for $L$ even this is equivalent to $d_L\in\{1,2,3\}$, and for $L$ odd to $d_L\in\{1,2\}$.
\end{lemma}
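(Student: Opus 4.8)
The plan is to compare the two quantities $N(L,d_L)$ and $M(L,d_L)$ directly as functions of $d_L$, exploiting that $N$ is affine-linear in $d_L$ while $M$ is also affine-linear in $d_L$ once the degrees $n(\dd),m(\dd)$ are fixed. First I would write out $N(L,d_L)$ explicitly: the hidden weight matrices contribute $2\cdot 2 = 4$ parameters each for the $L-2$ transitions between the $2$-dimensional layers $W_1,\dots,W_{L-1}$ (giving $4(L-1)$), plus the final matrix $W_L\in\R^{d_L\times 2}$ contributing $2d_L$, so $N(L,d_L)=4(L-1)+2d_L$. For the ambient dimension, I would invoke Lemma~\ref{lemma:binary-degrees}, which gives $n(\dd)=L+\delta(L)-1$ and $m(\dd)=L-\delta(L)$; since we work with binary inputs $d_0=2$, the spaces $S^{n(\dd)}(\R^2)$ and $S^{m(\dd)}(\R^2)$ have dimensions $n(\dd)+1$ and $m(\dd)+1$ respectively. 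Plugging into Equation~\eqref{eq:dim-ambient-space-formula} yields
\[
    M(L,d_L) = d_L\bigl(n(\dd)+1\bigr) + \bigl(m(\dd)+1\bigr) = d_L\bigl(L+\delta(L)\bigr) + \bigl(L-\delta(L)+1\bigr).
\]

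**Next I would set up the inequality** $N(L,d_L)\ge M(L,d_L)$ and solve for $d_L$. Substituting,
\[
    4(L-1) + 2d_L \;\ge\; d_L\bigl(L+\delta(L)\bigr) + L - \delta(L) + 1,
\]
which rearranges to
\[
    d_L\bigl(L+\delta(L)-2\bigr) \;\le\; 4L - 4 - L + \delta(L) - 1 \;=\; 3L + \delta(L) - 5 \;=\; 3\bigl(L+\delta(L)-2\bigr) + 1 - 2\delta(L).
\]
Since $L>2$ forces $L+\delta(L)-2 \ge 1 > 0$, I can divide without flipping the inequality, obtaining exactly
\[
    d_L \;\le\; 3 + \frac{1-2\delta(L)}{L+\delta(L)-2},
\]
as claimed. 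The lower bound $d_L\ge 1$ is automatic from the definition of a network architecture.

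**The last step** is to extract the integer consequences. For $L$ even, $\delta(L)=0$, so the fractional correction is $\frac{1}{L-2}\in(0,1]$, hence the bound reads $d_L\le 3+\frac{1}{L-2}$, which for integers means $d_L\le 3$ when $L>2$ (and note $d_L=4$ would require $L-2\le 1$, i.e.\ $L\le 3$, impossible for even $L>2$; actually $L=4$ gives $3+\tfrac12$, still $d_L\le 3$). For $L$ odd, $\delta(L)=1$, so the correction is $\frac{-1}{L-1}\in[-\tfrac12,0)$, giving $d_L < 3$, i.e.\ $d_L\le 2$. Combined with $d_L\ge 1$ this yields $d_L\in\{1,2,3\}$ for even $L$ and $d_L\in\{1,2\}$ for odd $L$.

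**I do not anticipate a serious obstacle here** — the argument is a bounded, elementary manipulation of affine functions of $d_L$. The only point requiring mild care is the bookkeeping of the parameter count (making sure the $L-1$ weight matrices among the hidden layers are correctly $2\times 2$, and that only the final matrix depends on $d_L$) and the correct substitution of the degree formulas from Lemma~\ref{lemma:binary-degrees}, including the parity term $\delta(L)$; a sign error in $\delta(L)$ would swap the even/odd conclusions. Once those are pinned down, the equivalence and its integer refinement both fall out immediately.
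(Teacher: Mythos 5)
Your proof is correct and follows essentially the same route as the paper: write out $N(L,d_L)=4(L-1)+2d_L$ and $M(L,d_L)=d_L(L+\delta(L))+(L-\delta(L)+1)$ via Lemma~\ref{lemma:binary-degrees}, rearrange the linear inequality in $d_L$, divide by the positive quantity $L+\delta(L)-2$, and split by parity. One minor slip in the prose: you write \emph{``for the $L-2$ transitions between the $2$-dimensional layers $W_1,\dots,W_{L-1}$''} — there are $L-1$ such $2\times 2$ matrices, not $L-2$ — but your formula $4(L-1)$ is already correct, so the computation is unaffected.
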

\begin{proof}
    See Appendix~\ref{apx:binary-params-dim}.
\end{proof}
Therefore, the only possible binary deep neural network architectures that can be filling are of the form $\dd=(2,\dots,2,d_L)$ for $L>2$ and  $d_L\leq 3$. 

\begin{remark}
    The case $L=2$ was covered in Proposition \ref{prop:neuromanifold-(2,m,k)} where the neurovariety $\mathcal{V}_{\dd,\sigma}$ with architecture $(2,2,k)$ is filling for all $k\geq 1$.
\end{remark}

As the following Proposition shows, only $d_L=1$ produces a filling architecture for deep binary neural networks. 

\begin{proposition}
\label{prop:binary-2..21-architecture}
    If $\dd=(2,\dots,2,1)$ and $\sigma(x) = 1/x$, then the neurovariety $\mathcal{V}_{\dd,\sigma}(\C)$ is filling, i.e.,
    \[
        \mathcal{V}_{\dd,\sigma}(\C)=S^{n(\dd)}(\C^2)\times S^{m(\dd)}(\C^2).
    \]
\end{proposition}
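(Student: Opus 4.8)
The plan is to show that the parametrization map is dominant when $d_L=1$ by exhibiting, for a generic target $(P,Q) \in S^{n(\dd)}(\C^2) \times S^{m(\dd)}(\C^2)$, an explicit choice of weights that hits it. The key structural fact is that over $\C^2$ \emph{every} homogeneous form factors completely into linear factors, so a generic $Q$ of degree $m(\dd) = L - \delta(L)$ splits as a product of $m(\dd)$ distinct linear forms, and a generic $P$ of degree $n(\dd) = L + \delta(L) - 1$ splits as a product of $n(\dd)$ distinct linear forms. By Proposition~\ref{prop:closed-form-binary-deep-network}, the network output has the rigid shape $P_{\ww} = p_{1,L}\, q_{L-1} q_{L-3} \cdots$ and $Q_{\ww} = q_L q_{L-2} \cdots$, where $q_i$ is a quadratic form and $p_{1,L}$ is a single linear form. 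So the strategy is: first prescribe the factors we want the intermediate $q_i$'s and $p_{i}$'s to have, then solve backwards for the weight matrices $W_1, \dots, W_L$.

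First I would set up the recursion. Writing $\ell^{(i)}_1, \ell^{(i)}_2$ for the two entries of the linear-form vector $p_i(\xx) = (W_i P_{12} W_{i-1} \cdots P_{12} W_1)\xx$, formula~\eqref{eq:def-quadratic-forms-binary-networks} gives $q_{i+1} = \ell^{(i)}_1 \ell^{(i)}_2$ up to scalar — i.e. each quadratic $q_{i+1}$ is the product of the two linear forms at level $i$. Thus the whole network is controlled by the sequence of $2\times 1$ linear-form vectors $p_1, p_2, \dots, p_L$, and passing from $p_i$ to $p_{i+1}$ is multiplication by $W_{i+1} P_{12}$, which is an arbitrary $2\times 2$ matrix (composed with the fixed swap). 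Given two generic linear-form vectors $p_i = (\ell^{(i)}_1, \ell^{(i)}_2)^T$ and $p_{i+1} = (\ell^{(i+1)}_1, \ell^{(i+1)}_2)^T$ with, say, $\ell^{(i)}_1, \ell^{(i)}_2$ linearly independent as forms in $x_1, x_2$, there is a (generically unique) matrix $A_{i+1}$ with $A_{i+1} p_i = p_{i+1}$, and we recover $W_{i+1} = A_{i+1} P_{12}^{-1} = A_{i+1} P_{12}$. Hence I can \emph{freely and independently} prescribe the pairs of linear forms $\{\ell^{(i)}_1, \ell^{(i)}_2\}$ at each level $i = 1, \dots, L-1$, plus the single linear form $p_{1,L}$ at the final level, and read off the weights. (The very first step $p_1 = W_1 \xx$ just sets $W_1$ directly.)

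Now I would match degrees. The denominator $Q_{\ww} = q_L q_{L-2} \cdots q_{\delta(L)}$ is a product of $\lfloor L/2\rfloor + 1 - \delta(L)$... more carefully, counting the factors in~\eqref{eq:RNN-coordinates-formulas-binary}, $Q_{\ww}$ is built from $q$'s at levels $L, L-2, \dots$ down to $2$ (if $L$ even, together with $q_0 = 1$) or down to $1$ (if $L$ odd, with $q_1 = 1$), and each such $q_j$ for $j \geq 2$ contributes the two linear forms $\ell^{(j-1)}_1 \ell^{(j-1)}_2$; summing gives exactly $m(\dd) = L - \delta(L)$ linear factors, all of which can be chosen independently by the previous paragraph. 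Likewise $P_{\ww} = p_{1,L}\, q_{L-1} q_{L-3} \cdots$ is the one linear form $p_{1,L}$ times the linear factors of the $q$'s at the \emph{complementary} set of levels, totaling $1 + (L + \delta(L) - 2) = n(\dd)$ linear factors, again all independently choosable — and crucially the levels feeding $P_{\ww}$ are disjoint from those feeding $Q_{\ww}$, so there is no interference. Therefore, given generic $(P, Q)$, I split $Q$ into its $m(\dd)$ linear factors and distribute them in pairs among the quadratics $q_L, q_{L-2}, \dots$; I split $P$ into its $n(\dd)$ linear factors, assign one of them to be $p_{1,L}$ and distribute the rest in pairs among $q_{L-1}, q_{L-3}, \dots$; then I reconstruct $W_1, \dots, W_L$ by the backward recursion. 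This shows every generic point of the ambient space lies in the image, so $\mathcal{M}_{\dd,\sigma}(\C)$ is Zariski dense and $\mathcal{V}_{\dd,\sigma}(\C)$ equals the whole ambient space.

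The main obstacle I anticipate is \textbf{genericity bookkeeping in the backward recursion}: at each level I need the chosen linear-form pair $\{\ell^{(i)}_1, \ell^{(i)}_2\}$ to be linearly independent as forms (so that the matrix $A_{i+1}$ carrying $p_i$ to $p_{i+1}$ exists and is invertible, keeping later steps well-defined), and I need the overall assignment to be consistent with the scalar normalizations hidden in~\eqref{eq:def-quadratic-forms-binary-networks} (the factor $\tfrac12$ and the freedom to rescale each $\ell^{(i)}_j$, which rescales $q_{i+1}$ and hence $P_{\ww}, Q_{\ww}$ by nonzero constants — these can be absorbed since we only need equality up to the implicit projective scaling, or balanced explicitly by one extra scalar degree of freedom in, say, $W_L$). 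Since the conditions "$\ell^{(i)}_1, \ell^{(i)}_2$ independent" and "intermediate matrices invertible" are all Zariski-open and nonempty, a generic choice of factorization of a generic $(P,Q)$ satisfies them simultaneously; making this precise — i.e. checking the open conditions are actually met for a generic target, rather than merely generically in the parameter space — is the step requiring care, but it is routine once the recursion is laid out. An alternative, possibly cleaner, route is a dimension count: verify $\dim \mathcal{V}_{\dd,\sigma} = \dim\bigl(S^{n(\dd)}(\C^2) \times S^{m(\dd)}(\C^2)\bigr) = n(\dd) + 1 + m(\dd) + 1 = 2L + 1$ by computing the rank of the Jacobian of $\Psi_{\dd,\sigma}$ at one explicit generic point and invoking irreducibility, but the constructive argument above has the advantage of also being effective.
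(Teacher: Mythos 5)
Your proposal is correct, and it takes a genuinely different route from the paper's proof. The paper argues by induction on the depth $L$: at each step it identifies the innermost quadratic $q_2$ as a factor of whichever of $P,Q$ has the relevant parity, uses it to extract $W_1$, and then replaces $(P,Q)$ by $\bigl(P(W_1^{-1}\xx),\;Q(W_1^{-1}\xx)/(x_1x_2)\bigr)$ to fall back to an $(L-1)$-layer problem. You instead do a single global construction: you use Proposition~\ref{prop:closed-form-binary-deep-network} to expose the rigid factorization $P_\ww = p_{1,L}\,q_{L-1}q_{L-3}\cdots$ and $Q_\ww = q_L q_{L-2}\cdots$, observe that $q_{i+1}$ is exactly the product of the two entries of the invertible linear-form vector $p_i = A_i\xx$ (with $A_i = W_iP_{12}\cdots P_{12}W_1$), note that $A_{i+1} = (W_{i+1}P_{12})A_i$ can be made arbitrary once $A_i$ is invertible, and therefore simply distribute the linear factors of a generic $P$ and $Q$ among the levels of the correct parity and read off $W_{i+1} = A_{i+1}A_i^{-1}P_{12}$ backwards. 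The degree bookkeeping checks out in both parities of $L$ ($m(\dd)$ linear factors for the levels feeding $Q$, $n(\dd)$ for those feeding $P$ counting the final linear form $p_{1,L}$), and your worry about scaling is actually a non-issue: the product over each parity class already reproduces $P$ and $Q$ exactly, and the residual per-factor rescalings are precisely the diagonal-matrix symmetries of Lemma~\ref{lemma:fiber-symmetries-general}. What the paper's induction buys is a cleaner genericity argument (at each step one only needs one discriminant to be nonzero); what your approach buys is a one-shot, non-recursive reconstruction that makes the parity structure of the factorization fully explicit — indeed the same picture immediately yields Proposition~\ref{prop:binary-not-dense-arch}, since for $d_L>1$ the numerators $P_{i,\ww}$ share all quadratic factors and differ only in the last linear form.
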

\begin{proof}
    See Appendix~\ref{apx:binary-2..21-architecture}.
\end{proof}

If we have more than one output, then the polynomials $P_{i,\ww}$ differ only by one root according to equations~\eqref{eq:RNN-coordinates-formulas-binary}. This shows the following. 

\begin{proposition}
\label{prop:binary-not-dense-arch}
    If $\dd = (2,\dots,2,k)$ and depth $k>1$, then $\mathcal{V}_{\dd,\sigma}(\C)$ is not filling, i.e.,
    \[
        \mathcal{V}_{\dd,\sigma}(\C)\subsetneq (S^{n(\dd)}(\C^2))^k\times S^{m(\dd)}(\C^2).
    \]
\end{proposition}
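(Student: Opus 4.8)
The plan is to exploit the rigid structure that the closed form of Proposition~\ref{prop:closed-form-binary-deep-network} imposes on the numerators of a binary RationalNet, and to encode it as a proper Zariski-closed condition on the ambient space. (Here one takes $L\ge 3$; for $L=2$ the architecture $(2,2,k)$ is filling by Proposition~\ref{prop:neuromanifold-(2,m,k)}.) By~\eqref{eq:RNN-coordinates-formulas-binary}, every numerator factors as $P_{i,\ww}=p_{i,L}\cdot R_{\ww}$, where $R_{\ww}:=q_{L-1}q_{L-3}\cdots q_{\delta(L+1)}$ does \emph{not} depend on $i$ and $p_{i,L}$ is the $i$-th coordinate of the linear-form vector $p_L(\xx)$. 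Since $\deg P_{i,\ww}=n(\dd)$ by Lemma~\ref{lemma:binary-degrees} and $\deg p_{i,L}=1$, the common factor $R_{\ww}$ has degree $n(\dd)-1$, and for $L\ge3$ one has $n(\dd)\ge3$, so $n(\dd)-1\ge2$. Thus for any two output indices $i\ne j$ the binary forms $P_{i,\ww},P_{j,\ww}$ share a divisor of degree $n(\dd)-1=\deg P_{i,\ww}-1$; equivalently, there are linear forms $\ell_1,\ell_2$ on $\C^2$, not both zero, with $\ell_2P_{i,\ww}-\ell_1P_{j,\ww}=0$ (take $\ell_1=p_{i,L}$, $\ell_2=p_{j,L}$, or any nonzero pair if both vanish).

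Next I would verify that
\[
    Z=\bigl\{(P_1,\dots,P_k,Q)\ :\ \exists\ \text{linear }(\ell_1,\ell_2)\ne(0,0)\ \text{with}\ \ell_2P_1-\ell_1P_2=0\bigr\}
\]
is Zariski-closed in the affine ambient space $\mathcal A:=(S^{n(\dd)}(\C^2))^k\times S^{m(\dd)}(\C^2)$. The incidence locus $\widetilde Z\subseteq\mathcal A\times\mathbb{P}^1$ defined by the vector equation $\ell_2P_1-\ell_1P_2=0$ is cut out by relations that are linear in the coefficients of $P_1,P_2$ and homogeneous of degree one in $[\ell_1:\ell_2]$, hence closed; the projection $\mathcal A\times\mathbb{P}^1\to\mathcal A$ is proper, so closed, and its image is exactly $Z$. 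A short divisibility argument in the UFD $\C[x_1,x_2]$ shows that, for $P_1,P_2\ne0$, membership in $Z$ is equivalent to $\deg\gcd(P_1,P_2)\ge n(\dd)-1$; one could also describe $Z$ via the vanishing of an appropriate subresultant of $P_1$ and $P_2$.

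It then remains to check the two containments. The first paragraph gives $\mathcal M_{\dd,\sigma}(\C)\subseteq Z$. For properness of $Z$, take the tuple with $P_1=x_1^{n(\dd)}$, $P_2=x_2^{n(\dd)}$ and all other coordinates arbitrary: since $n(\dd)\ge2$, we have $\gcd(P_1,P_2)=1$ of degree $0<n(\dd)-1$, so this tuple lies in $\mathcal A\setminus Z$. Hence $\mathcal V_{\dd,\sigma}(\C)=\overline{\mathcal M_{\dd,\sigma}(\C)}\subseteq Z\subsetneq\mathcal A$, which is the assertion.

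The only step that is not pure bookkeeping is the closedness of $Z$; both routes sketched above (the incidence variety, or the subresultant criterion) make it routine, so the conceptual heart of the argument is simply that all numerators of a binary RationalNet are forced to coincide up to a single linear factor, a degeneracy that a generic tuple in $\mathcal A$ does not exhibit.
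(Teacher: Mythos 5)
Your proof is correct and rests on exactly the same observation as the paper's: by Proposition~\ref{prop:closed-form-binary-deep-network} every numerator factors as $P_{i,\ww}=p_{i,L}\cdot R_{\ww}$ with a common $R_{\ww}$ of degree $n(\dd)-1\geq 2$ (for $L\geq 3$), so any two numerators coincide up to a single linear factor, which is a proper Zariski-closed condition on the ambient space. The paper encodes this more tersely via the vanishing of $\Res(P_i,P_j)$ for $i\neq j$, whereas you encode the tighter condition $\deg\gcd(P_i,P_j)\geq n(\dd)-1$ through an incidence variety over $\projSp{1}$; either suffices, and your version, while longer, makes explicit the closedness and non-triviality steps that the paper leaves implicit.
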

\begin{proof}
    See Appendix~\ref{apx:binary-not-dense-arch}.
\end{proof}

\section{Numerical Experiments}
\label{sec:numerics}

In this section, we present numerical experiments with rational neural networks. Section~\ref{sec:6-dimensions} introduces our main conjecture about the dimension of neurovarieties and includes numerical verification for a range of architectures. In Section~\ref{sec:6-training}, we illustrate how such networks can be trained to learn the locations of singularities of a meromorphic function.

\subsection{Dimensions}
\label{sec:6-dimensions}

To compute the dimension of the neuromanifold, we follow the methods introduced in~\cite{KILEEL19}. Specifically, we compute the rank of the Jacobian of the combinatorial parameter map $\Psi_{\dd,\sigma}$ defined in~\eqref{eq:parameter-map-combinatorial-general} numerically over a finite field with a sufficiently larger prime number using \emph{SAGE} package. To obtain an upper bound on the dimension of the neurovariety, we examine the dimension of a generic fiber of the combinatorial parameter map.

\begin{lemma}
\label{lemma:dim-generic-fiber}
    The dimension of the generic fiber of the combinatorial parameter map $\Psi_{\dd,\sigma}$ defined in~\eqref{eq:parameter-map-combinatorial-general} is at least
    \[
        \sum_{i=1}^{L-1}d_i-1.
    \]
\end{lemma}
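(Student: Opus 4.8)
The plan is to upgrade the fiber symmetries of Lemma~\ref{lemma:fiber-symmetries-general} so that they track the \emph{coefficients} $P_{i,\ww},Q_{\ww}$ and not merely the rational functions $f_{i,\ww}$. Let $G=(\R^\ast)^{d_1}\times\cdots\times(\R^\ast)^{d_{L-1}}$ be the torus sitting inside the symmetry group of Lemma~\ref{lemma:fiber-symmetries-general}: an element $g$, encoded by diagonal matrices $D_k=\mathrm{diag}(\lambda_{k,1},\dots,\lambda_{k,d_k})$ for $k=1,\dots,L-1$, acts on $\R^N$ by $W_k\mapsto D_kW_kD_{k-1}$, equivalently $w_{k,i,j}\mapsto \lambda_{k,i}\lambda_{k-1,j}\,w_{k,i,j}$, with the conventions $\lambda_{0,\cdot}=\lambda_{L,\cdot}=1$. (The permutation part of Lemma~\ref{lemma:fiber-symmetries-general} is discrete, hence irrelevant for dimension counting, so I discard it.) Since $\dim G=\sum_{i=1}^{L-1}d_i$, it is enough to exhibit a codimension-at-most-one closed subgroup of $G$ that lies inside one fiber of the combinatorial map $\Psi_{\dd,\sigma}$, together with the (easy) fact that its orbits have the expected dimension.

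The first step is to show that under $G$ every polynomial appearing in the recursion~\eqref{eq:main-recursion-formula} is merely rescaled by a scalar: by induction on $k$ there are monomial characters $c_{k,i},C_k\colon G\to\R^\ast$ with $p^{(k)}_i\mapsto c_{k,i}(g)\,p^{(k)}_i$ and $q^{(k)}\mapsto C_k(g)\,q^{(k)}$. The base case ($p^{(1)}_i\mapsto\lambda_{1,i}p^{(1)}_i$, $q^{(0)}=q^{(1)}=1$) and the denominator step $q^{(k+1)}=\prod_j p^{(k)}_j$ are immediate. For $p^{(k+1)}_i=\sum_j w_{k+1,i,j}\prod_{s\ne j}p^{(k)}_s$ the point is that the $j$-dependent factor produced by $g$ equals $\lambda_{k,j}\prod_{s\ne j}c_{k,s}=\big(\prod_{s}c_{k,s}\big)\big/\big(c_{k,j}/\lambda_{k,j}\big)$, so it is constant in $j$ precisely because the ratio $c_{k,j}/\lambda_{k,j}$ is independent of $j$ — a property that I would carry along as part of the induction hypothesis (it holds at $k=1$ and is preserved). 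Plugging these scalings into~\eqref{eq:RNN-coordinates-formulas} gives $P_{i,\ww}\mapsto \chi_i(g)\,P_{i,\ww}$ and $Q_{\ww}\mapsto \chi_Q(g)\,Q_{\ww}$; and since $P_{i,\ww}/Q_{\ww}=f_{i,\ww}$ is $G$-invariant by Lemma~\ref{lemma:fiber-symmetries-general}, all of these characters coincide, call the common one $\chi$. Thus $g\cdot\ww$ has combinatorial output $\chi(g)\cdot\Psi_{\dd,\sigma}(\ww)$.

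Now set $H=\ker\chi\le G$; it is a closed subgroup with $\dim H\ge\dim G-1=\sum_{i=1}^{L-1}d_i-1$ (with equality unless $\chi$ is trivial, in which case the bound only improves). For $g\in H$ we have $\Psi_{\dd,\sigma}(g\cdot\ww)=\chi(g)\cdot\Psi_{\dd,\sigma}(\ww)=\Psi_{\dd,\sigma}(\ww)$, so the orbit $H\cdot\ww$ is contained in the fiber $\Psi_{\dd,\sigma}^{-1}\!\big(\Psi_{\dd,\sigma}(\ww)\big)$. Finally I would note that the $G$-action — hence the $H$-action — is generically free: if $g\cdot\ww=\ww$ and no row of any $W_k$ vanishes, then $w_{1,i,j}=\lambda_{1,i}w_{1,i,j}$ forces $\lambda_{1,\cdot}=1$, then $w_{2,i,j}=\lambda_{2,i}\lambda_{1,j}w_{2,i,j}=\lambda_{2,i}w_{2,i,j}$ forces $\lambda_{2,\cdot}=1$, and so on through $\lambda_{L-1,\cdot}=1$. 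Hence for generic $\ww$ the orbit $H\cdot\ww$ has dimension $\dim H$, and the generic fiber of $\Psi_{\dd,\sigma}$ has dimension at least $\sum_{i=1}^{L-1}d_i-1$.

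I expect the only real obstacle to be the bookkeeping in the inductive rescaling argument — keeping the side and sign conventions straight (it is $W_k\mapsto D_kW_kD_{k-1}$, with a $+$ power on both sides, because $\sigma(Dx)=D^{-1}\sigma(x)$) and verifying the auxiliary invariant $c_{k,j}/\lambda_{k,j}=c_{k,j'}/\lambda_{k,j'}$; everything after that is a routine orbit-dimension count. One could instead try to deduce the bound from Lemma~\ref{lemma:fiber-symmetries-general} via the factorization of the rational parameter map through $\Psi_{\dd,\sigma}$ followed by $(P_1,\dots,P_{d_L},Q)\mapsto(P_1/Q,\dots,P_{d_L}/Q)$ and a fiber-dimension inequality, but that route requires controlling the fibers of the division map, which is only as tight as one wants when a generic tuple $(P_1,\dots,P_{d_L},Q)$ has coprime components — exactly the open conjecture from Section~\ref{subsec:2-combinatorial-param-map}. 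The torus argument sidesteps this.
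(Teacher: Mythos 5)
Your argument is correct and is essentially the paper's own: the diagonal torus $G$ from Lemma~\ref{lemma:fiber-symmetries-general} rescales the tuple $(P_{1,\ww},\dots,P_{d_L,\ww},Q_{\ww})$ by a common monomial character $\chi$, so $\ker\chi$ has dimension at least $\sum_{i=1}^{L-1}d_i-1$ and acts generically freely inside a single fiber of the combinatorial map. Your inductive bookkeeping via the invariant ratio $c_{k,j}/\lambda_{k,j}$ is in fact more precise than the paper's terse statement that the rescaling factor is ``a product of all diagonal elements of $D_i$'' (for deeper networks $\chi$ is a higher-degree monomial in the $\lambda_{k,i}$), and your closing freeness check is the right way to convert the subgroup dimension into a fiber-dimension bound.
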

\begin{proof}
    See Appendix~\ref{apx:dim-generic-fiber}
\end{proof}

If the neurovariety is not filling, then its generic dimension is bounded by
\[
    \dim(\mathcal{V}_{\dd,\sigma})
    \;\leq\; \sum_{i=1}^{L} d_i d_{i-1} \;-\; \sum_{i=1}^{L-1} d_i \;+\; 1.
\]
We compute the dimensions of the neurovariety for all possible architectures whose number of parameters $N$ is bounded by $30$ and whose number of layers $L$ is bounded by $5$, which results in architectures $722$ to check. The total computation time was $9$ hours. For each architecture, we set a timeout of $10$ seconds to accelerate the process. Table~\ref{table:dimension-results} provides several examples of the architectures we computed.

\begin{table}[h]
\centering
\caption{Dimensions of the neurovariety, 
ambient space, and parameter count.}
\label{table:dimension-results}
\begin{tabular}{cccccc}
\hline\hline
$\dd$ & $\dim \mathcal{V}_{\dd,\sigma}$ & Ambient dim.
 & Param. count & Runtime [s] & Conjectured $\dim \mathcal{V}_{\dd,\sigma}$ \\
\hline
    $[3, 3, 3, 3]$ & $22$  & $136$  & $27$  & $0.759$ & $22$  \\
    $[2, 3, 4, 3]$ & $24$  & $39$  & $30$ & $0.659$ & $24$  \\
    $[4, 3, 2, 2, 3]$ & $22$  & $372$  & $28$  & $4.116$ & $22$  \\
    $[2, 2, 2, 3, 2, 1]$ & $14$ & $15$ & $22$ & $0.649$ & $14$ \\
    $[2, 2, 4, 2, 2, 1]$ & $17$ & $23$ & $26$ & $9.620$ & $17$ \\
\hline\hline
\end{tabular}
\end{table}

Based on our computations, we confirmed that the computed dimensions indeed agree with the predicted ones. This leads us to the following conjecture.
\begin{conjecture}
    For $\dd=(d_0,d_1,\dots,d_L)$ and $\sigma(x)=1/x$ the dimension of the neurovariety is 
    \[
        \dim(\mathcal{V}_{\dd,\sigma})
        =\max\left(
            \sum_{i=1}^L d_i d_{i-1} - \sum_{i=1}^{L-1} d_i + 1,\;\;
            d_L \binom{d_0 + n(\dd)-1}{n(\dd)} + \binom{d_0 + m(\dd)-1}{m(\dd)}
        \right).
    \]
\end{conjecture}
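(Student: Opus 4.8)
The plan is to establish the equality by two inequalities, treating the two terms in the $\max$ separately. The second term is simply the dimension of the ambient space (Equation~\eqref{eq:dim-ambient-space-formula}), so $\dim(\mathcal{V}_{\dd,\sigma})$ never exceeds it; this gives the $\leq$ direction against the second argument of the $\max$ for free. For the first term, the combinatorial parameter map $\Psi_{\dd,\sigma}\colon \R^N\to(S^{n(\dd)}(\R^{d_0}))^{d_L}\times S^{m(\dd)}(\R^{d_0})$ has image $\mathcal{M}_{\dd,\sigma}$ whose Zariski closure is $\mathcal{V}_{\dd,\sigma}$, so by the fiber-dimension theorem $\dim(\mathcal{V}_{\dd,\sigma}) = N - (\text{generic fiber dimension})$. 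Lemma~\ref{lemma:dim-generic-fiber} gives that the generic fiber has dimension at least $\sum_{i=1}^{L-1} d_i - 1$, hence $\dim(\mathcal{V}_{\dd,\sigma}) \leq N - \sum_{i=1}^{L-1} d_i + 1 = \sum_{i=1}^L d_i d_{i-1} - \sum_{i=1}^{L-1} d_i + 1$, the first argument of the $\max$. Combining, $\dim(\mathcal{V}_{\dd,\sigma}) \leq \min$ of... no: we get $\dim \leq$ each of the two, but the conjecture asserts equality with the $\max$, so the content is really the reverse inequality, i.e., $\dim(\mathcal{V}_{\dd,\sigma}) \geq \min\!\big(\text{first term},\text{second term}\big)$ is \emph{not} what we want — we want $\dim \geq \max$ is false in general, so in fact the conjecture is that exactly one of the two upper bounds is tight, namely whichever is smaller governs, and we must show the smaller one is actually achieved.

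Reading the statement carefully: when the first term is $\leq$ the second, the neurovariety is not filling and $\dim = $ first term, requiring the generic fiber dimension to be \emph{exactly} $\sum d_i - 1$ (not more) and the map to be otherwise generically an immersion onto its image after quotienting by the fiber; when the first term exceeds the second, the neurovariety must be all of the ambient space, so $\dim = $ second term. So the real work splits into: (i) proving the generic fiber of $\Psi_{\dd,\sigma}$ has dimension \emph{exactly} $\sum_{i=1}^{L-1} d_i - 1$ whenever the first term is the smaller one — equivalently, that the symmetries listed in Lemma~\ref{lemma:fiber-symmetries-general}/Lemma~\ref{lemma:dim-generic-fiber} are the \emph{only} ones generically; and (ii) proving the filling claim (neurovariety equals ambient space) whenever the parameter count beats the ambient dimension. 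Part (ii) can presumably be attacked architecture-class by architecture-class using the structural results already in the paper: Proposition~\ref{prop:neuromanifold-(2,m,k)} handles the shallow $\dd=(2,m,k)$ case, Proposition~\ref{prop:binary-2..21-architecture} handles $\dd=(2,\dots,2,1)$, and one would need to show these exhaust (or dominate) the remaining filling cases, perhaps by a dimension-count reduction to the boundary architectures plus a tangent-space computation showing the Jacobian of $\Psi_{\dd,\sigma}$ attains full rank equal to the ambient dimension at a generic point.

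I would carry this out as follows. First, record the two easy upper bounds above. Second, prove (i) by computing the generic rank of the Jacobian of $\Psi_{\dd,\sigma}$: show that $\operatorname{rank} = N - \sum_{i=1}^{L-1} d_i + 1$ whenever this quantity is $\leq$ the ambient dimension, by exhibiting a specific choice of weights $\ww$ (e.g. generic, or a carefully chosen sparse pattern making the recursion in Theorem~\ref{theorem:neural-network-closed-form-general} transparent) at which the Jacobian's null space is spanned exactly by the infinitesimal generators of the diagonal-and-permutation symmetries — this is the heart of the matter and where semicontinuity of rank lets us conclude for generic $\ww$. Third, prove (ii): when $N - \sum d_i + 1 > $ ambient dimension, show $\operatorname{rank}$ of the Jacobian equals the ambient dimension at a generic point, which forces $\mathcal{V}_{\dd,\sigma}$ to fill; here one leverages the closed-form factorizations~\eqref{eq:RNN-coordinates-formulas} and~\eqref{eq:RNN-coordinates-formulas-binary} to set up an explicit surjectivity-of-differential argument, possibly inducting on $L$. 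The main obstacle will be step (i): controlling the generic rank of the Jacobian of a deeply composed map with the multiplicative, non-multilinear structure of the $1/x$ activation — the recursion~\eqref{eq:main-recursion-formula} makes the numerators and denominators highly entangled products of linear forms, so showing that no \emph{additional} fiber directions appear for general architectures (and not merely for the small cases checked numerically in Table~\ref{table:dimension-results}) likely requires either a clever change of coordinates that ``unfolds'' the composition layer by layer, or a careful study of when the intermediate products $p^{(k)}_i$, $q^{(k)}$ fail to be algebraically independent. It is plausible that the conjecture is genuinely open precisely because this rank computation resists a uniform argument.
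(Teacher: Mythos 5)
This statement is, in the paper itself, labeled a \emph{Conjecture}: the authors explicitly say it was arrived at by numerical verification (Table~\ref{table:dimension-results}) and is proven only for the special family $\dd=(n,2,m)$ (Theorem~\ref{theorem:n2m-architecture-ideal}). There is therefore no ``paper's proof'' to compare your attempt against, and no amount of comparison can be a full check of correctness here.

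That said, two things are worth flagging in your proposal. First, you correctly detected that the formula as printed cannot be read literally: the second argument of the $\max$ is the ambient-space dimension from Equation~\eqref{eq:dim-ambient-space-formula}, which is an unconditional upper bound on $\dim(\mathcal{V}_{\dd,\sigma})$, so the statement can only be consistent with the data in Table~\ref{table:dimension-results} (e.g.\ $\dd=(3,3,3,3)$ has conjectured dimension $22$, not $136$) if $\max$ is read as $\min$. Your mid-paragraph self-correction lands on exactly the right reading, even if the reasoning meanders to get there; you could state it more cleanly as $\dim(\mathcal{V}_{\dd,\sigma})=\min\bigl(N-\sum_{i=1}^{L-1}d_i+1,\ \text{ambient dim}\bigr)$, with the two quantities being the under- and over-parameterized upper bounds respectively.

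Second, and decisively for evaluation: your proposal is a strategy outline, not a proof. You correctly derive both upper bounds (the ambient dimension trivially, and $N-\sum d_i+1$ via Lemma~\ref{lemma:dim-generic-fiber} plus the fiber-dimension theorem). But the substance of the conjecture is the matching lower bound, which requires (i)~showing the generic fiber of $\Psi_{\dd,\sigma}$ has dimension \emph{exactly} $\sum_{i=1}^{L-1}d_i-1$ for under-parameterized architectures, i.e.\ that the scaling/permutation symmetries of Lemma~\ref{lemma:fiber-symmetries-general} exhaust the generic fiber, and (ii)~showing the neurovariety fills in all over-parameterized cases, not just $\dd=(2,m,k)$ and $\dd=(2,\dots,2,1)$. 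You name both of these but do not execute either. You are also honest that step~(i) --- controlling the generic Jacobian rank of the deeply composed map generated by the recursion~\eqref{eq:main-recursion-formula} --- is precisely what makes the problem resist a uniform argument, and that is the correct diagnosis: this gap is the reason the paper states the result as a conjecture rather than a theorem. No sparse-weight choice or layer-by-layer unfolding is exhibited that would certify the rank, and one should not be asserted without producing it. The proposal is thus a reasonable plan of attack, but the load-bearing steps remain open.
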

We have already proven this conjecture for  architectures $\dd=(n,2,m)$ in Theorem~\ref{theorem:n2m-architecture-ideal}. We also found the architecture $\dd=(2,3,2,1)$ to be particularly interesting, as the neurovariety in this  case is an irreducible hypersurface.

\begin{example}
    The architecture $\dd=(2,3,2,1)$ is not filling, as $\dim(\mathcal{M}_{\dd,\sigma})=10$ while the dimension of the ambient space equals $11$. Using the \texttt{MultigradedImplicitization} package for Macaulay2 by Cummings and Hollering \cite{CUMMINGS23}, we verified that the defining equation must have degree at least 5. Unfortunately, we were unable to compute  the  equation itself as the computation did not finish after $10$ hours of running.
\end{example}

\begin{problem}
    Determine the defining equation of the neurovariety corresponding to the architecture $\dd=(2,3,2,1)$. Moreover, identify all other architectures whose associated neurovarieties are hypersurfaces.
\end{problem}

\subsection{Training}
\label{sec:6-training}

Training neural networks with discontinuous activation functions is not uncommon. For example, JumpReLU is used in \cite{ERICHSON19} and provides an improvement against adversarial attacks. Moreover, training shallow networks is closely related to determining the location of linear poles of a meromorphic function in several variables \cite{DAHMEN24}. In this setting, the rows of $W_1$ correspond to the location of the poles.

As a proof of concept, we consider the meromorphic function
\[
    g(x,y)=\frac{1}{x+y} + \frac{1}{x-y},
\] 
defined on $[-1,1]^2$ with singularities excluded. We uniformly sample a lattice of size $21 \times 21$ on this square, see Figure~\ref{fig:training-data}.

\begin{figure}[ht]
    \centering
    \begin{minipage}[t]{0.48\textwidth}
        \centering
        \includegraphics[width=0.6\textwidth]{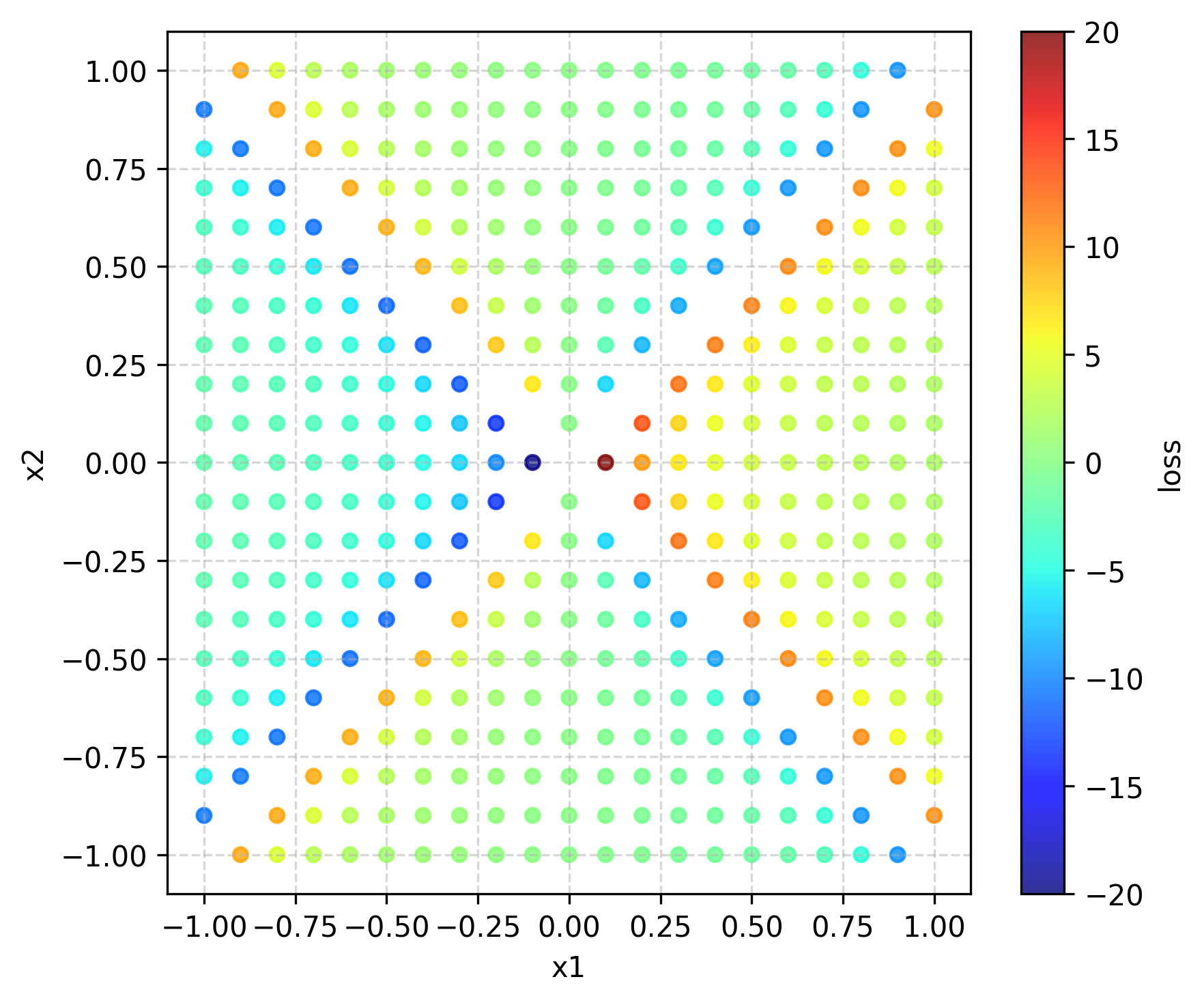}
        \caption{Training data}
        \label{fig:training-data}
    \end{minipage}%
    \hfill
    \begin{minipage}[t]{0.48\textwidth}
        \centering
        \includegraphics[width=0.6\textwidth]{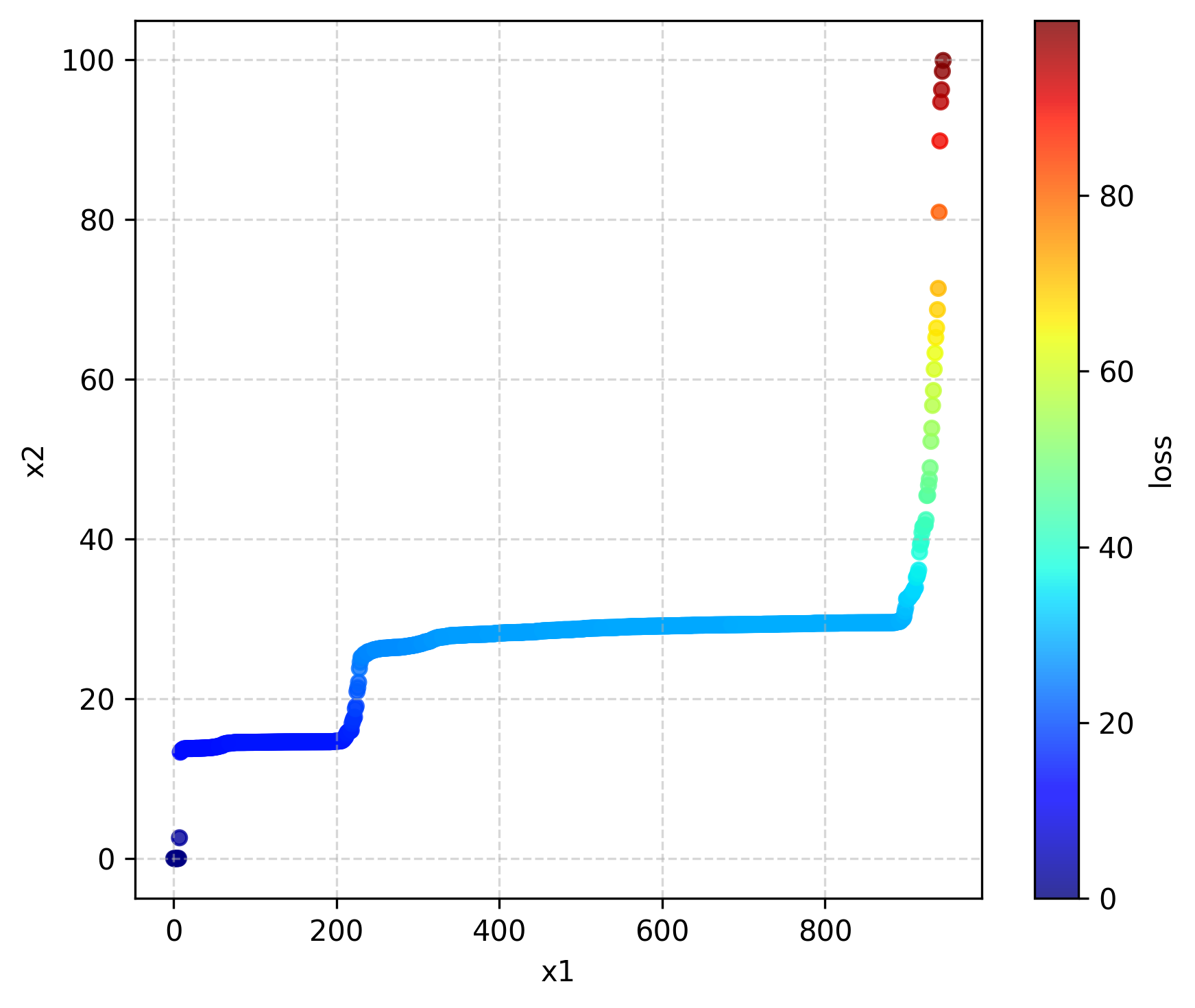}
        \caption{Final loss}
        \label{fig:final-loss}
    \end{minipage}
\end{figure}

We train a rational neural network with architecture $\dd = (2,2,1)$ to approximate $g$. To understand the learning dynamics, we perform $1000$ random initializations of the weights using the Xavier initialization scheme~\cite{GLOROT10}. Training is carried out for $20{,}000$ epochs using the Adam optimizer (learning rate $10^{-3}$) implemented in Torch. The loss function is the mean squared error, and full-batch training is applied.

We observed that the success rate of the loss converging to zero was only about $1\%$, while the success rate of learning at least one singularity was around $20\%$; see Figure~\ref{fig:final-loss}. Although the probability of learning both singularities simultaneously was small, the network was able to capture at least one singularity with considerably higher success.

\begin{figure}[ht]
    \centering
    \begin{subfigure}[t]{0.48\textwidth}
        \centering
        \begin{subfigure}[t]{0.48\textwidth}
            \centering
            \includegraphics[width=\linewidth]{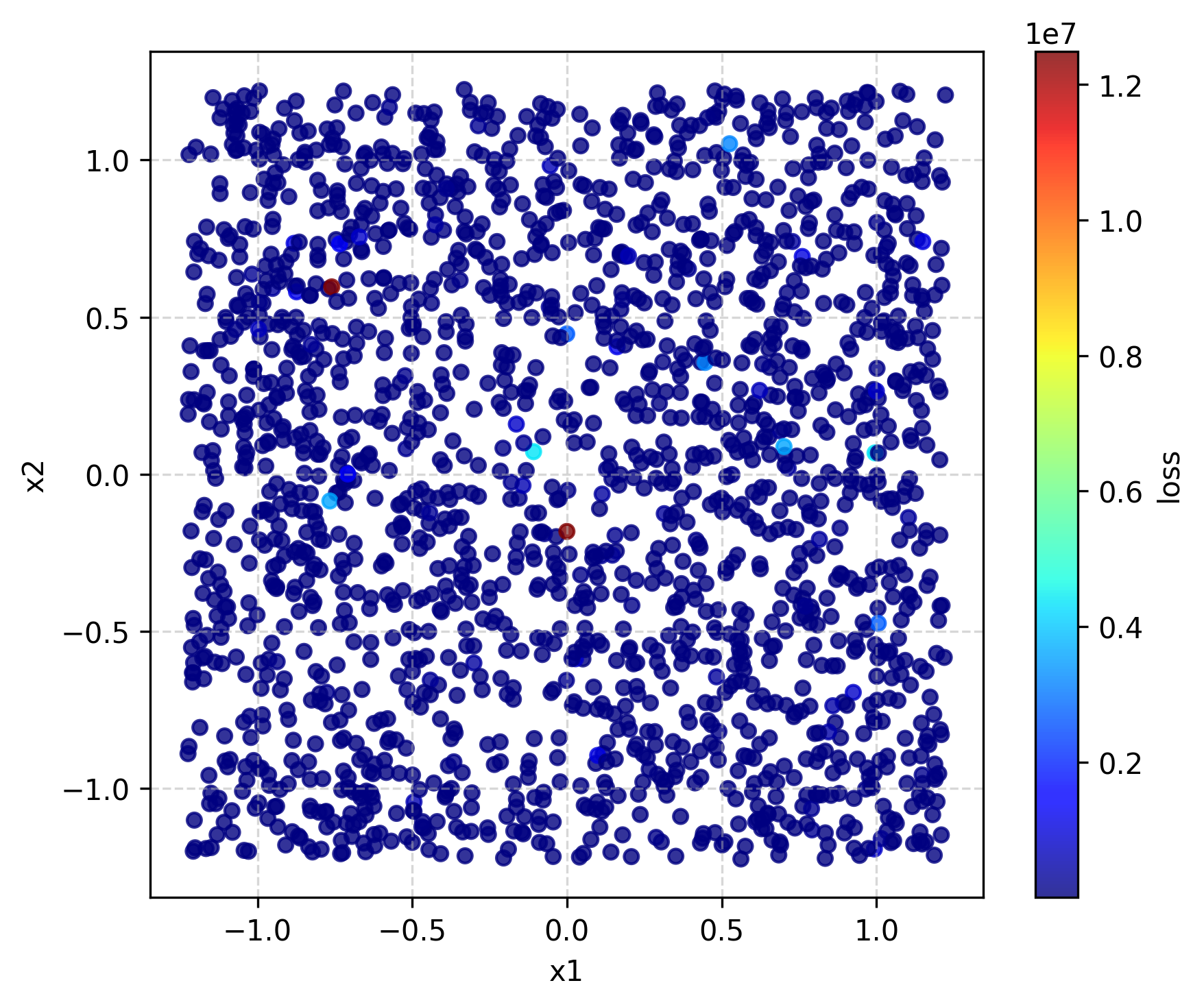}
            \caption{Rows of $W_1$'s before training}
        \end{subfigure}
        \hfill
        \begin{subfigure}[t]{0.48\textwidth}
            \centering
            \includegraphics[width=\linewidth]{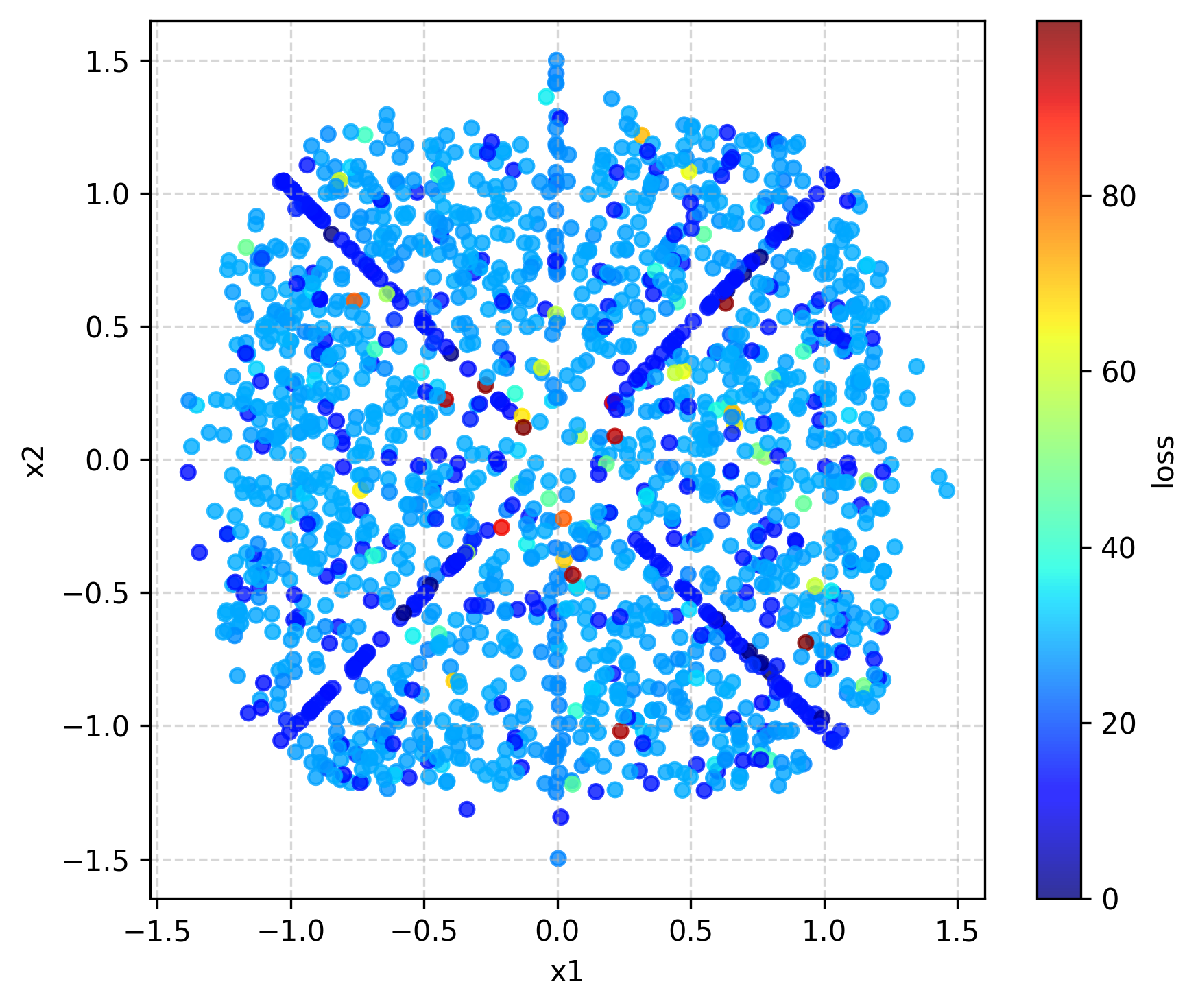}
            \caption{Rows of $W_1$'s after training}
        \end{subfigure}
    \end{subfigure}
    \hfill
    \begin{subfigure}[t]{0.48\textwidth}
        \centering
        \begin{subfigure}[t]{0.48\textwidth}
            \centering
            \includegraphics[width=\linewidth]{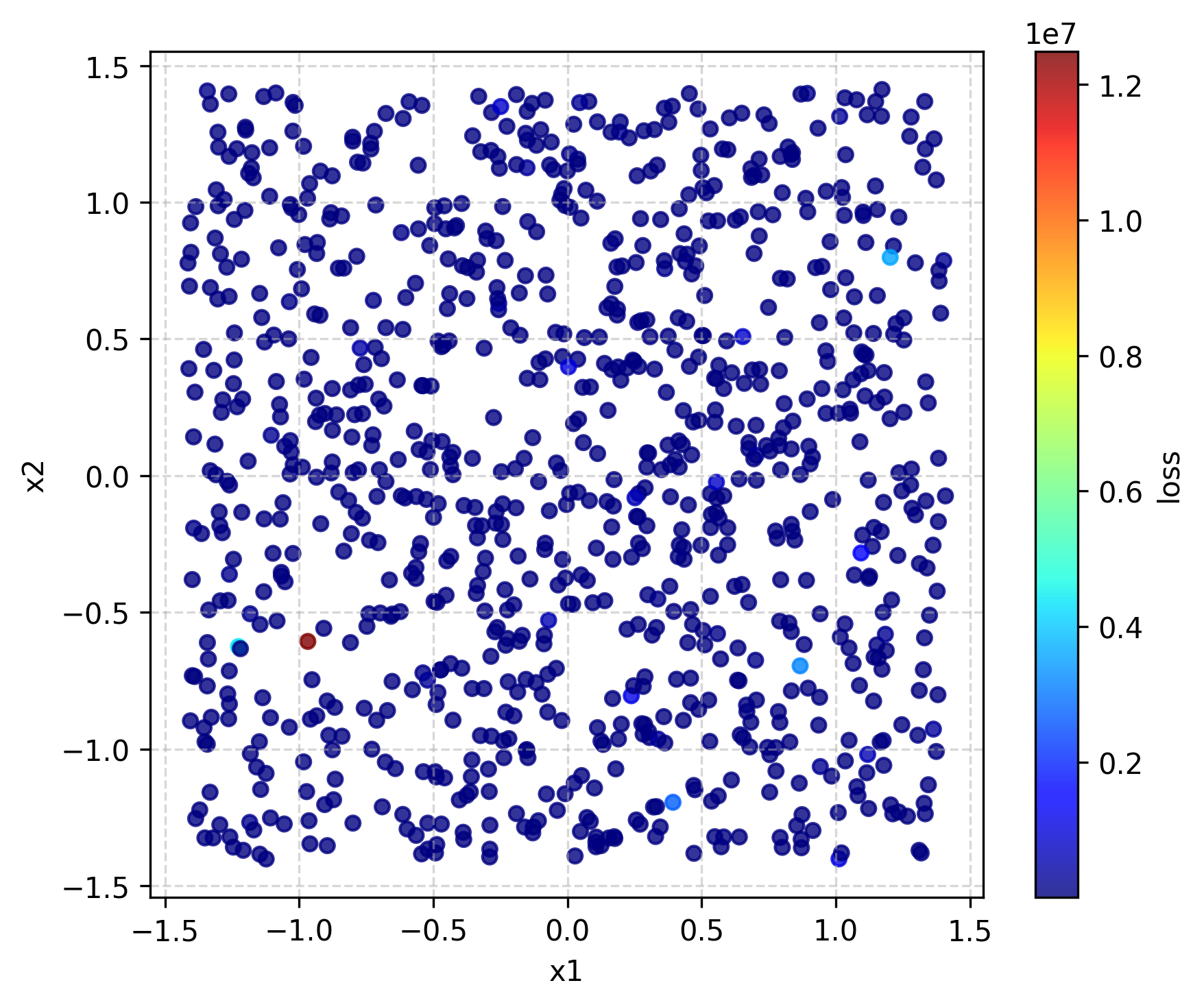}
            \caption{$W_2$'s before training}
        \end{subfigure}
        \hfill
        \begin{subfigure}[t]{0.48\textwidth}
            \centering
            \includegraphics[width=\linewidth]{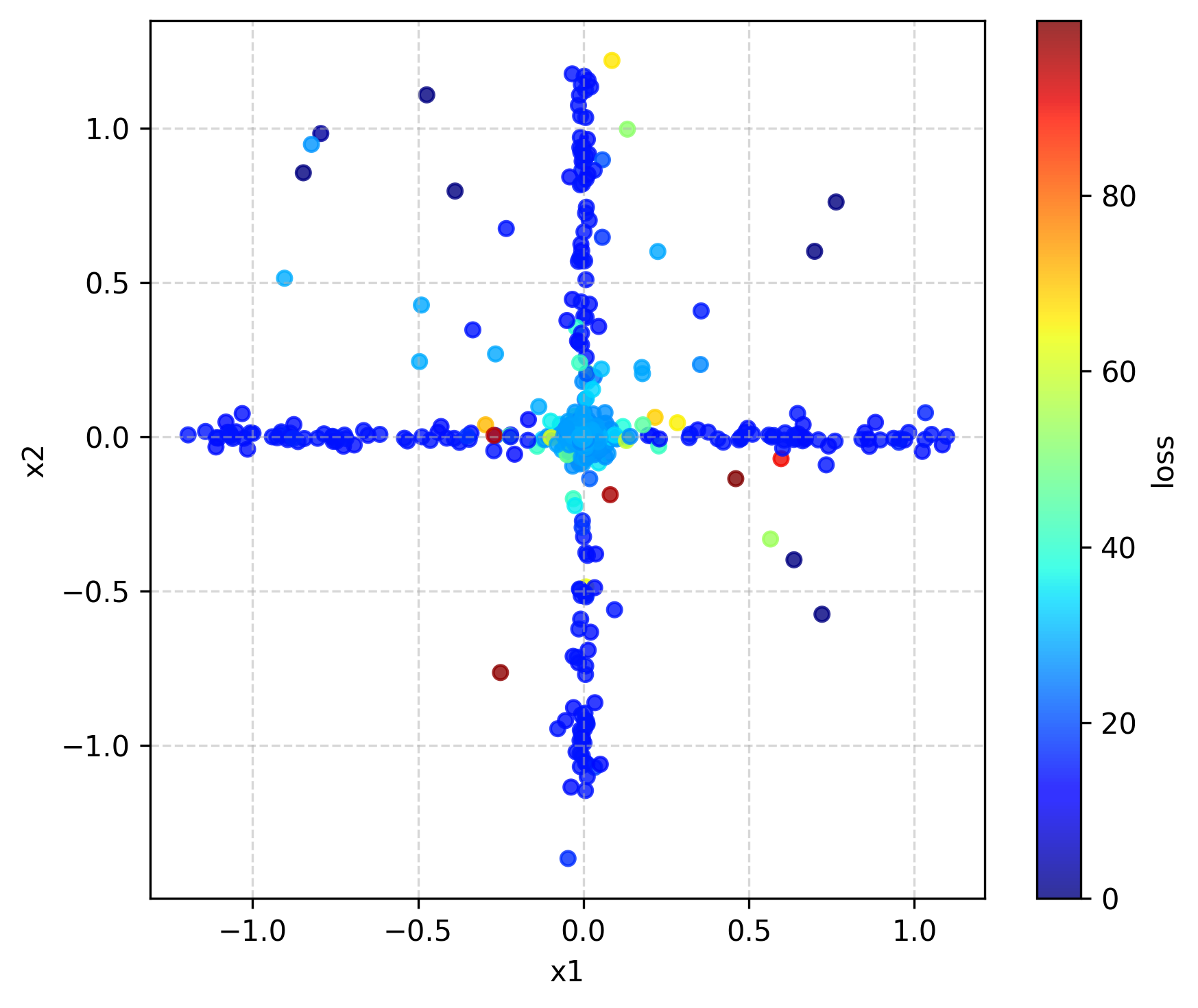}
            \caption{$W_2$'s after training}
        \end{subfigure}
    \end{subfigure}

    \caption{The dynamic of parameters changes during training}
    \label{fig:w1-w2-dynamic}
\end{figure}

Figure~\ref{fig:w1-w2-dynamic} depicts the rows of $W_1$ and $W_2$ before and after training. We observe that most failures of the network to learn the singularities are due to the vanishing of entries in $W_2$. Recall that the entries of $W_2$ serve as coefficients in the linear combination of the inverses of the linear forms.

Therefore, rational neural networks have the potential to learn the locations of singularities from data. In particular, the rows 
 of $W_1$ correspond to the singularities of the function. However, this is only the case if the 
 cost function 
 drops to 0, which is hard to achieve. This shows that 
 there is a trade-off between 
 the interpretability of the weights and the stability of the optimization 
 procedure.
 More advanced optimization techniques may be able to significantly improve the performance of the network. 

\section{Conlcusion ans Future Work}
\label{sec:conclusion}

In this paper, we introduced and studied the neuromanifolds and the neurovarieties associated with rational neural networks via a combinatorial parameter map. 
For shallow neural networks we proposed two algorithms of algebraic nature that allow us to recover the parameters of a given rational function in the image of the network. 
Further, we found polynomial equations in the ideals of the variety,
and even described the full ideal for some models. 
We analyzed possible filling architectures for shallow and binary deep architectures. 
We also conjectured a general formula for the dimension of neurovarieties and demonstrated that rational neural networks can be trained in practice.

Our line of work opens numerous possibilities for future research
in the area of rational neural networks.
Possible  directions include studying networks with more general activation functions of the form $p/q$, rather the special case $\sigma(x)=1/x$. Another important direction is to further develop the description of neurovarieties, analyze their singularities, and explore the connections between the geometry of neuromanifolds/neurovarieties and the training dynamics of rational neural networks. 

\subsection*{Acknowledgments}
This research was initiated while the authors were visiting the Algebraic Statistics and Our Changing World program at the Institute for Mathematical and Statistical Innovation (IMSI), supported by the National Science Foundation (Grant No. DMS-1929348). Elina Robeva and Maksym  Zubkov were supported by a Canada CIFAR  AI Chair award and an NSERC Discovery Grant (DGECR-2020-00338).

\bibliographystyle{plain}
\bibliography{references}

\appendix
\section{Proofs}

\subsection{Proof of Theorem~\ref{theorem:neural-network-closed-form-general}}
\label{apx:neural-network-closed-form-general}
\begin{proof}
    We proceed by induction on the number of layers $L$. For $k\geq 1$, define the index sets
    \[
        S_k\coloneqq\{k-1,k-3,\dots,\delta(k+1)\},\qquad T_k\coloneqq\{k,k-2,\dots,\delta(k)\}.
    \]
    We claim that after $k$ layers, the network output can be written as
    \begin{equation}
    \label{eq:main-theorem-k-step}
        f_k(\xx)=
        \begin{bmatrix}
            P_{1,k}(\xx)/Q_k(\xx)\\ 
            \vdots\\ 
            P_{d_k,k}(\xx)/Q_k(\xx)
        \end{bmatrix},
        \quad
        P_{i,k}(\xx)=p^{(k)}_i(\xx)\!\!\!\prod_{t\in S_k}\!\! q^{(t)}(\xx),
        \quad
        Q_k(\xx)=\!\!\!\prod_{t\in T_k}\!\! q^{(t)}(\xx),
    \end{equation}
    for $i=1,\dots,d_k$.
    
    \noindent
    \emph{Base case $k=1$.} We have $f_1(\xx)=W_1\xx=\begin{bmatrix}
        \ell_1\dots\ell_{d_1}
    \end{bmatrix}^T$. Here $S_1=\{0\}$ and $T_1=\{1\}$, giving $P_{i,1}=p^{(1)}_i q^{(0)}$ and $Q_1=q^{(1)}$ with $p^{(1)}_i=\ell_i$, and $q^{(0)}=q^{(1)}=1$ that matches~\eqref{eq:main-theorem-k-step}.

    \emph{Inductive step.} Assume \eqref{eq:main-theorem-k-step} holds for some $k\geq1$. We must show that it holds for $k+1$. Let us compute $W_{k+1}\sigma(f_k)$. Applying $\sigma$ entrywise, we will obtain
    \[
        \sigma(f_k)(\xx) = 
        \begin{bmatrix}
            Q_k/P_{1,k}\\ 
            \vdots\\ 
            Q_k/P_{d_k,k}
        \end{bmatrix} = \frac{1}{q_k}
        \begin{bmatrix}
            1/p^{(k)}_1\\ 
            \vdots\\ 
            1/p^{(k)}_{d_k}
        \end{bmatrix},
        \quad
        q_k\coloneqq\frac{\prod_{t\in S_k} q^{(t)}}{\prod_{t\in T_k} q^{(t)}}
        =
        \frac{q^{(k-1)}q^{(k-3)}\cdots q^{(\delta(k+1))}}{q^{(k)}q^{(k-2)}\cdots q^{(\delta(k))}}.
    \]
    Multiplying by $W_{k+1}\in\R^{d_{k+1}\times d_k}$ yields
    \[
        W_{k+1}\sigma(f_k)
        =\frac{1}{q_k}
        \begin{bmatrix}
            \sum_{i=1}^{d_k} w_{k+1,1,i}\,(1/p^{(k)}_i)\\
            \vdots\\
            \sum_{i=1}^{d_k} w_{k+1,d_{k+1},i}\,(1/p^{(k)}_i)
        \end{bmatrix}
        =\frac{1}{q_k\prod_{j=1}^{d_k}p^{(k)}_j}
        \begin{bmatrix}
            \sum_{i=1}^{d_k} w_{k+1,1,i}\,\widehat{p}^{(k)}_{i}\\
            \vdots\\
            \sum_{i=1}^{d_k} w_{k+1,d_{k+1},i}\,\widehat{p}^{(k)}_{i}
        \end{bmatrix},
    \]
    where $\widehat{p}^{(k)}_{i}\coloneqq\prod_{s\neq i} p^{(k)}_s$. Since $q^{(k+1)}=\prod_{j=1}^{d_k}p^{(k)}_j$ by recursive definition and $\delta(k)=\delta(k+2)$, then
    \[
        W_{k+1}\sigma(f_k)
        =\frac{1}{q_k\,q^{(k+1)}}
        \begin{bmatrix}
            p^{(k+1)}_1\\ 
            \vdots\\ 
            p^{(k+1)}_{d_{k+1}}
        \end{bmatrix}
        =\frac{q^{(k)}q^{(k-2)}\cdots q^{(\delta(k+2))}}{q^{(k+1)}q^{(k-1)}q^{(k-3)}\cdots q^{(\delta(k+1))}}
        \begin{bmatrix}
            p^{(k+1)}_1\\ 
            \vdots\\ 
            p^{(k+1)}_{d_{k+1}}
        \end{bmatrix}.
    \]
    This matches \eqref{eq:main-theorem-k-step} for $k+1$, completing the induction.
\end{proof}

\subsection{Proof of Lemma~\ref{lemma:degrees-for-p-and-q}}
\label{apx:degrees-for-p-and-q}
\begin{proof}
    Let $n_k\coloneqq\deg(p^{(k)}_{i})$ and $m_k\coloneqq\deg(q^{(k)})$. From the recursion relations~\eqref{eq:main-recursion-formula} and $\deg(fg)=\deg(f)+\deg(g)$ and $\deg(f+g)=\max\{\deg(f),\deg(g)\}$, one checks that the degrees satisfy the following recursive relation for $k\geq 2$
    \begin{align*}
        n_{k} &= (d_{k-1}-1)n_{k-1},\\
        m_{k} &= d_{k-1}n_{k-1}
    \end{align*}
    with initial conditions $n_1=1$ and $m_1=0$. The desired formulas for the degrees of $p_{i}^{(k)}$ and $q^{(k)}$ are obtained by recursively expressing $n_i$ in terms of $n_{i-1}$ until reaching the initial condition $n_1$.  
\end{proof}

\subsection{Proof of Lemma~\ref{lemma:total-degree}}
\label{apx:total-degree}
\begin{proof}
    The result follows from applying Lemma~\ref{lemma:degrees-for-p-and-q} to the definitions of $P_{i,\ww}$ and $Q_{\ww}$ in \eqref{eq:RNN-coordinates-formulas}, while taking into account the alternating structure of the indices.
\end{proof}

\subsection{Proof of Lemma~\ref{lemma:fiber-symmetries-general}}
\label{apx:fiber-symmetries-general}
\begin{proof}
    Let $\ww^{\prime}$ be the transformed weights $\ww$. It suffices to show that for each $i=1,\dots,d_L$, the $i$th coordinate of the output is unchanged under the transformation, that is, $f_{i,\ww^{\prime}}(\xx)=f_{i,\ww}(\xx)$ for all $\xx$ in the definition domain. 
    
    Since $\sigma$ acts coordinate-wise, multiplying by a permutation matrix on the left permutes coordinates before applying $\sigma$, while multiplying on the right by the inverse permutation after applying $\sigma$ undoes the original reordering.
    
    Next, we check that the output is also invariant under the action of diagonal matrices. Fix a layer index $k\in\{1,\dots,L-1\}$. We want to show that
    \[
        W_{k+1}D_{k}\sigma(D_{k}W_{k}f^{(k-1)}_{\ww})=W_{k+1}\sigma(W_kf^{(k-1)}_{\ww})
    \]
    where $f^{(k)}_{\ww}$ is the output after $k$ layers. Let $l_{i,k}\coloneqq(W_{k}f^{(k-1)}_{\ww})_i$ be the $i$th coordinate of $W_kf^{(k-1)}_{\ww}$, and let $\lambda_{i,k}$ be the $i$th diagonal entry of $D_k\in\R^{d_k\times d_k}$. Then
    \[
        D_{k}W_{k}f^{(k-1)}_{\ww} = \begin{bmatrix}
            \lambda_{1,k}l_1 & \lambda_{2,k}l_2 & \dots \lambda_{d_{k},k}l_{d_{k}}
        \end{bmatrix}^T.
    \]
    Applying $W_{k+1}D_k\sigma$ gives
    \[
        ( W_{k+1}D_{k}\sigma(D_{k}W_{k}f^{(k-1)}_{\ww}))_i = \left(W_{k+1}D_{k}\begin{bmatrix}
            1/(\lambda_{1,k}l_1) & \dots & 1/(\lambda_{1,d_{k}}l_{d_{k}})
        \end{bmatrix}\right)_i =
    \]
    \[
       = \frac{\lambda_{1,k}w_{k+1,i,1}}{\lambda_{1,k}\ell_1}+\dots+\frac{\lambda_{1,d_k}w_{k+1,i,d_k}}{\lambda_{1,d_k}\ell_{d_k}} = \frac{w_{k+1,i,1}}{\ell_1}+\dots+\frac{w_{k+1,i,d_k}}{\ell_{d_k}} = (W_{k+1}\sigma(W_kf^{(k-1)}_{\ww}))_i,
    \]
    for all $i$, which proves the claim.
\end{proof}

\subsection{Proof of Lemma~\ref{lemma:closed-tensor-form-shallow-network}}
\label{apx:closed-tensor-form-shallow-network}
\begin{proof}
    The equations~\ref{eq:shallow-network-equations} follow from the identity
    \[
        x_1 x_2 \cdots x_m \;=\; \Sym(e_1\otimes e_2\otimes \cdots \otimes e_m)\circ \xx^{\otimes m},
        \quad
        \xx=\begin{bmatrix}x_1 & x_2 & \cdots & x_m\end{bmatrix}^{\!T}.
    \]
    Indeed, if $\ell_i=(W_1\xx)_i$ for $i=1,\dots,m$, then setting $y:=W_1\xx$ gives
    \[
        \ell_1\ell_2\cdots \ell_m \;=\; \prod_{i=1}^m y_i
        \;=\; \Sym(e_1\otimes e_2\otimes \cdots \otimes e_m)\circ y^{\otimes m}
        \;=\; \Sym(e_1\otimes \cdots \otimes e_m)\circ (W_1\xx)^{\otimes m}.
    \]
    This is exactly the desired form. The second identity is obtained in the same way.
\end{proof}

\subsection{Proof of Lemma~\ref{lemma:shallow-network-params-vs-amb-dim}}
\label{apx:shallow-network-params-vs-amb-dim}
\begin{proof}
    Fix $m\geq1$ and $k\geq1$.  We split the analysis by the input dimension~$n$.

    \noindent
    \textbf{Case $\boldsymbol{n=1}$.}
    Here $M(1,m,k)=1+k$ while $N(1,m,k)=m(1+k)\geq1+k$ as $m\geq 1$.  Hence $N\geq M$ for all $(m,k)$.

    \noindent
    \textbf{Case $\boldsymbol{n=2}$.}
    We have
    \[
         M(2,m,k)=\binom{m+1}{m}+k\binom{m}{m-1}=m+1+km
         \quad\text{and}\quad
         N(2,m,k)=2m+km.
    \]
    Since $2m+km\geq m+1+km$ for every $m\geq 1$, the inequality
    $N\geq M$ again holds for all $(m,k)$.
    
    \noindent
    \textbf{Case $\boldsymbol{n\geq3}$.}
    For $n\geq3$ the first binomial term :
    \[
         \binom{n+m-1}{m}\geq mn
    \]
    for a fixed $n$ for all $m$. For the second binomial coefficient, we have that  
    \[
        \binom{n+m-2}{m-1} \geq \frac{m(m+1)}{2} > m,
    \]
    so $N(n,m,k)<M(n,m,k)$. Therefore the inequality $N\geq M$ fails for all $n\geq3$.
    
    \medskip
    Combining the three cases, we conclude that $N(n,m,k)\geq M(n,m,k)$ holds if and only if $n\in\{1,2\}$ with $m\geq1$ and $k\geq1$.
\end{proof}

\subsection{Proof of Lemma~\ref{lemma:lin-independence-of-g_hat_i}}
\label{apx:lin-independence-of-g_hat_i}
\begin{proof}
    Suppose that $\{\hat{\ell}_1,\hat{\ell}_2,\dots,\hat{\ell}_m\}$ are linearly dependent. Without loss of generality, assume the coefficient of $\hat{\ell}_m$ is non-zero. Then $\hat{\ell}_m$ can be expressed as a linear combination of $\hat{\ell}_1, \dots, \hat{\ell}_{m-1}$, i.e.,
    \[
        \ell_1 \ell_2 \dots \ell_{m-1} = \hat{\ell}_m = \sum_{i=1}^{m-1} a_i \hat{\ell}_i.
    \]
    Since each $\hat{\ell}_i$ is divisible by $\ell_m$ for all $i=1,\dots,m-1$, then $\ell_1\ell_2\dots \ell_{m-1}=\ell_m p$ for some polynomial $p \in S^{m-1}(\mathbb{C}^n)$. This implies that $\ell_m$ divides $\ell_i$ for some $i$, hence $\{\ell_i,\ell_m\}$ are linearly dependent.
\end{proof}

\subsection{Proof of Proposition~\ref{prop:neuromanifold-(2,m,k)}}
\label{apx:neuromanifold-(2,m,k)}
\begin{proof}
    First, let us show that $\mathcal{M}_{\dd,\sigma}(\C)$ is not filling. We want to find a point 
    \[
        (P_1,\dots,P_k,Q)\in\left(S^{m-1}(\mathbb{\C}^2)\right)^k \times S^m(\mathbb{\C}^2)    
    \]
    for which there are no parameters $\ww$ solving Equations~\eqref{eq:nmk-definining-system}. 
    
    Take $\ell_1(\xx)=\ell_2(\xx)=x_1$, and let $\ell_3,\dots,\ell_m$ be any nonzero linear forms. Then the polynomials $P_{i,\ww}$ and $Q_{\ww}$ are both divisible by $x_1$ for all $i$. Thus, take $P_i(\xx)=x_2^{m-1}$ for all $i$ and $Q(\xx)=\ell_1\ell_2\dots \ell_m$, then quations~\eqref{eq:nmk-definining-system} have no solutions $\ww$ as $P_i$ are not divisible by $x_1$. Hence, the neuromanifold is not filling.
    
    Next, we show that the neurovariety is filling. Consider the Zariski open set where the discriminant of $Q$ does not vanish 
    \begin{equation}
    \label{eq:2mk-Zariski-open}
        U\coloneqq (S^{m-1}(\C^2))^{k}\times S^{m}(\C^2)\setminus ((S^{m-1}(\C^2))^k\times V_Q),
    \end{equation}
    where $V_Q$ is the discriminant hypersurface in $S^m(\C^2)$. For any point $(P_1,\dots,P_k,Q)\in U$,
    the binary form $Q$ factors as a product of $m$ distinct linear forms 
    \[
        Q = \ell_1\ell_2\dots \ell_m.
    \]
    These linear forms $\ell_j$ determine the rows of $W_1$.
    
    To recover $W_2$, we need to solve the system of linear equations in \ref{eq:nmk-definining-system}. Since $S^{m-1}(\C^2)$ is an $m$-dimensional vector space and $\{\hat{\ell}_1,\dots,\hat{\ell}_m\}$ are linearly independent according to Lemma~\ref{lemma:lin-independence-of-g_hat_i}, then $\{\hat{\ell}_1,\dots,\hat{\ell}_m\}$ is a basis for $S^{m-1}(\C^2)$. Therefore, we can reconstruct the rows of $W_2$ uniquely knowing $W_1$ from the equations 
    \[
        P_{i} = b_{i1} \hat{\ell}_{1,m}+\dots+b_{im} \hat{\ell}_{m,m}.
    \]
    Thus every point in $U$ lies in the neuromanifold $\mathcal{M}_{\dd,\sigma}(\C)$. Because $U$ is Zarisky open, its closure is the enitre ambient space. In particular, the neurovariety $\mathcal{V}_{\dd,\sigma}(\C)$ is filling. 
\end{proof}


\subsection{Proof of Theorem~\ref{theorem:n2m-architecture-ideal}}
\label{apx:n2m-architecture-ideal}
\begin{proof}
    By explicitly writing out the composition of functions that define the neural network,
    we obtain the parametrization
    \begin{align*}
        \phi: \mathbb{R}[C] &\to  \mathbb{R}[a,b] \\
        C_{\e_i+\e_j} &\mapsto
  \begin{cases}
    a_{1i}a_{2j} & \text{for } i=j, \\
    a_{1i}a_{2j} + a_{1j}a_{2i}& \text{otherwise}, 
  \end{cases}\\
        C_{k,\e_i} &\mapsto b_{k1}a_{2i}+b_{k2}a_{1i}.
    \end{align*}
    Our goal is to show that the ideal $I:= \ker(\phi)$
    is generated by the $3\times 3$ minors of $\mathbf{M}$.

    From the above parametrization we deduce the following factorization of $\phi(\mathbf M)$:
    \[
    \phi(\mathbf{M}) = \begin{bmatrix}
            a_{21} & a_{11}\\
            a_{22} & a_{12}\\
            \vdots & \vdots \\
            a_{2n} & a_{1n}
        \end{bmatrix}\begin{bmatrix}
            b_{11} & b_{21} & \dots & b_{m1} & a_{11} & a_{12} & \dots &a_{1n} \\
            b_{12} & b_{22} &\dots & b_{m2}& a_{21} & a_{22} & \dots &a_{2n} 
        \end{bmatrix}.
    \]
    This is the product of the permuted transpose of $W_1$
    with $[W_2^{\top}|W_1]$.
    Crucially, this means that $\phi(\mathbf{M})$
    has rank at most $2$,
    so all its $3$-minors vanish, 
    implying that all $3$-minors of $\mathbf{M}$
    are in the ideal $I$. 

    For the opposite direction we use the fact that the ideal $\mathcal{M}_3(\mathbf M)$ generated
     by the $3$-minors of $M$ can be shown to be prime as a determinantal ideal of a partially symmetric generic matrix.
    Then equality follows from the fact that the two ideals in question have the same dimension.
    Indeed from the inclusion $\mathcal{M}_3(\mathbf M) \subseteq I$
    we obtain the series of (in)equalities 
    \begin{equation*}
        2(n+m)-1 \leq \dim I \leq \dim \mathcal{M}_3(\mathbf M)  = 2(n+m)-1.
    \end{equation*}
    The inclusion of ideals of the same dimension, 
    along with primality of $\mathcal{M}_3(\mathbf M)$ that we show in a separate lemma (Lemma~\ref{lemma:minors-ideal-prime}), show that the ideals are equal.
    In the remainder of this proof, we explain the first inequality and the last equality in the above equation. 

    We index the rows of the Jacobian of the map $\phi$ 
    by ordering first the variables $a$ and then $b$ lexicographically,
    and the columns first by using coefficients of the joint denominator 
    and then the numerators. 
    Then the Jacobian has an upper-triangular block form,
    so the rank is the sum of the ranks of the diagonal blocks. 
    Since we only want to bound the dimension from below, 
    it is enough to find values for the parameters that achieve the wanted rank. 
    We choose $a_{12} = a_{21}=1$ and set all other values to $0$.
    We examine the upper left block first, 
    corresponding to the variables $C_{\e_i+\e_j}$.
    The variable $a_{21}$ appears exactly in the $n$ columns corresponding to $C_{\e_1+\e_j}$,
    in the row of $a_{1i}$.
    The remainder of the entries in those columns are variables $a_{1i}$ that we set to $0$, 
    with the exception of $C_{\e_1+\e_2}$
    that has an additional $1$ in the row indexed by $a_{21}$.
    Similarly, $a_{12}$ appears exactly in the $n$ columns corresponding to $C_{\e_2+\e_j}$,
    in the row of $a_{21}$ and the rest are zeros.
    Thus, the rank of this block is $2n-1$,
    as we obtain different column unit vectors plus the column vector $C_{\e_1+\e_2}$ that has ones in the two remaining positions.
    
    Furthermore, for each $k = 1, \dots, m$
    the diagonal block corresponding to 
    columns $C_{k, \e_i}$
    and rows indexed by $b_{k1},b_{k2}$ 
    is a flipped version of $W_1$ and therefore contributes a rank of 2.
    Adding everything together we find the generic rank of this matrix to be at least $2n-1+2m = 2(n+m)-1$,
    which is a bound for the dimension of $I$.
    
    To find the dimension of $\mathcal{M}_3(\mathbf M)$ we use  Example 3.8 in \cite{CONCA94}.
    The matrix $\mathbf{M}$ is an $n \times (n+m)$ partially symmetric matrix 
    so the ideal of its minors of size $t =3 $ has dimension 
    \begin{equation*}
        (n+m+1-t/2)(t-1) = 2(n+m)-1
    \end{equation*}
    as we want to show. 
\end{proof}

\begin{example}
For $\dd = (3,2,2)$ the Jacobian takes the following form
\begin{equation*}
    \begin{bNiceArray}{cccccc|ccc|ccc}[first-row,first-col]
    & C_{2\e_1} &C_{\e_1+\e_2} &C_{2\e_2} &C_{\e_1+\e_3} &C_{\e_2+\e_3} &C_{2\e_3} &C_{1,\e_1} &C_{1,\e_2} &C_{1,\e_3} &C_{2,\e_1} &C_{2,\e_2} &C_{2,\e_3} \\
    a_{11} &a_{21} & a_{22} & 0 & a_{23} & 0 & 0 & \Block{6-3}<\Large>{*} & & & \Block{6-3}<\Large>{*} \\
    a_{12} &0 & a_{21} & a_{22} & 0 & a_{23} & 0 \\
    a_{13} &0 & 0 & 0 & a_{21} & a_{22} & a_{23} \\
    a_{21} &a_{11} & a_{12} & 0 & a_{13} & 0 & 0 \\
    a_{22} &0 & a_{11} & a_{12} & 0 & a_{13} & 0 \\
    a_{23} &0 & 0 & 0 & a_{11} & a_{12} & a_{13} \\
    \hline
    b_{11} &\Block{2-6}<\Large>{\mathbf{0}} & & & & & & a_{21} & a_{22}& a_{23} & \Block{2-3}<\Large>{\mathbf{0}} \\
     b_{12} & & & & & & &a_{11} & a_{12}& a_{13} & & & \\
     \hline 
     b_{21} &\Block{2-6}<\Large>{\mathbf{0}} & & & & & & \Block{2-3}<\Large>{\mathbf{0}} & & & a_{21} & a_{22}& a_{23}\\
     b_{22} & & & & & & & & & &  a_{11} & a_{12}& a_{13}
     
\end{bNiceArray}.
\end{equation*}
We set $a_{12} = a_{21} =1$
and the remaining values to $0$, 
as in the proof of Theorem \ref{theorem:n2m-architecture-ideal}.
The first five columns of the upper left block are linearly independent, and the last column is the zero vector, 
so the rank is five. 
Each of the remaining two diagonal blocks have rank $2$.
The rank of the Jacobian is thus $5+2+2 = 9 = 2(3+2)-1$.
\end{example}

\begin{lemma}
\label{lemma:minors-ideal-prime}
    Let $S$ be a general symmetric matrix of indeterminates $s_{ij}$, 
    $G$ a general matrix of indeterminates $g_{ij}$
    such that the two matrices have the same number of rows,
    and $d$ a positive integer.
    Then, the ideal of minors  
    \[\mathcal{M}_d([S|G])\]
    of the concatenation of the two matrices is prime. 
\end{lemma}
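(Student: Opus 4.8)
The plan is to realize the concatenated matrix $[S\mid G]$ as a generic matrix of a certain type to which a known primality criterion for determinantal ideals applies, or failing that, to exhibit $\mathcal{M}_d([S\mid G])$ as the defining ideal of an irreducible variety. The cleanest route uses the fact that the ideal $\mathcal{M}_d$ of $d$-minors of a matrix of indeterminates is prime whenever the matrix has a sufficiently ``generic'' combinatorial structure; here, since $G$ is a completely generic (unstructured) block, the only obstruction to genericity is the symmetry of $S$, and the relevant result is the primality of ideals of minors of \emph{symmetric} matrices together with stability of this property under adjoining generic columns.

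First I would set up the scheme-theoretic picture. Let $R = \K[s_{ij}, g_{k\ell}]$ be the polynomial ring in the entries of $S$ (symmetric, so $s_{ij} = s_{ji}$) and $G$ (free), and let $N = [S\mid G]$ be the resulting $n \times (n+m)$ matrix over $R$. Consider the determinantal variety $X = V(\mathcal{M}_d(N)) \subseteq \operatorname{Spec} R$, i.e.\ the locus where $\operatorname{rank} N \le d-1$. I would show $X$ is irreducible by constructing a dominant morphism from an irreducible parameter space onto it, exploiting the factorization $N = A \cdot B$ available on the rank-$\le d-1$ locus. Concretely, on the open subset where $N$ has rank exactly $d-1$, one can write $N = UV$ with $U$ an $n\times(d-1)$ matrix and $V$ a $(d-1)\times(n+m)$ matrix; the constraint that the first block $S = U V_1$ be symmetric cuts out a subvariety of the space of pairs $(U,V)$, and I claim this subvariety is irreducible because symmetry of $U V_1$ is a ``bilinear'' condition that can be solved rationally (e.g.\ by fixing $U$ generically of full column rank and letting $V_1$ vary in the affine space $\{V_1 : UV_1 = (UV_1)^\top\}$, which is nonempty and irreducible, then letting $V_2$ — the block producing $G$ — be completely free). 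The image of the resulting incidence variety under projection is dense in $X$, so $X$ is irreducible, hence $\sqrt{\mathcal{M}_d(N)}$ is prime.

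Then I would upgrade irreducibility to primality of $\mathcal{M}_d(N)$ itself (not merely its radical) by invoking that determinantal ideals of ``$1$-generic'' or sufficiently generic matrices are radical. The standard tool here is that the ideal of maximal (or fixed-size) minors of a matrix whose entries are distinct indeterminates — or, more generally, a matrix that is $1$-generic in the sense of Eisenbud — is prime; the symmetric block $S$ is $1$-generic and adjoining the fully generic block $G$ preserves $1$-genericity, so $[S\mid G]$ is $1$-generic and Eisenbud's theorem on ideals of minors of $1$-generic matrices applies. Alternatively, I would cite the literature on ideals of minors of symmetric and ``mixed'' matrices directly: Conca's results (the same source \cite{CONCA94} used for the dimension computation in the proof of Theorem~\ref{theorem:n2m-architecture-ideal}) establish primality and compute dimension and even give Gr\"obner bases for ideals of minors of symmetric matrices, and these arguments extend to a symmetric matrix concatenated with a generic one since the extra columns only enlarge the set of indeterminates without introducing new relations.

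The main obstacle I anticipate is the interaction between the symmetry constraint on $S$ and the minor condition: one must be careful that a $d$-minor involving both columns from the symmetric block and columns from the generic block does not vanish for ``accidental'' reasons coming from the identification $s_{ij} = s_{ji}$, and that the parametrization $N = UV$ with $UV_1$ symmetric genuinely surjects onto the whole rank locus rather than a proper subvariety. I would handle this by a dimension count — comparing the dimension of the incidence variety (which is computable, as the fiber over a generic $U$ is an affine space of known dimension) against the dimension $2(n+m)-1$ of $\mathcal{M}_d(\mathbf{M})$ computed via \cite{CONCA94} in the application — to confirm the map is dominant and the target irreducible of the expected dimension, which together with the $1$-genericity/primality input closes the argument.
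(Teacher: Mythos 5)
Your proposal identifies the right objects but leaves the crux of the lemma unproved. The $N=UV$ parametrization argument at best shows that the \emph{variety} $V(\mathcal{M}_d([S\,|\,G]))$ is irreducible, i.e.\ that $\sqrt{\mathcal{M}_d([S\,|\,G])}$ is prime; the content of the lemma is that the ideal equals its radical, and nothing in your sketch closes that gap. The appeal to $1$-genericity is not a valid shortcut: Eisenbud's primality theorems for $1$-generic matrices concern \emph{maximal} minors (and ideals of $2\times 2$ minors of scroll-type matrices), not the ideal of $d$-minors for arbitrary $d$ of a matrix with a symmetric block. There is no general statement of the form ``$1$-generic implies every fixed-size minor ideal is radical.'' Your fallback --- that Conca's symmetric-matrix results ``extend to a symmetric matrix concatenated with a generic one since the extra columns only enlarge the set of indeterminates without introducing new relations'' --- is precisely the claim that needs to be proved, not a reason for it; mixed symmetric-plus-generic matrices are not covered ready-made by the symmetric case, and one has to exhibit a mechanism that transfers primality across the concatenation.

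The paper's proof supplies exactly that mechanism, via elimination rather than parametrization. One adjoins a fresh block $T$ of generic \emph{symmetric} indeterminates to complete $[S\,|\,G]$ to a genuinely symmetric generic matrix $M=\begin{bmatrix}S & G\\ G^{\top} & T\end{bmatrix}$, whose ideal of $d$-minors $\mathcal{M}_d(M)$ is prime by the classical theory (and has a Gr\"obner basis given by the $d$-minors themselves, by \cite{CONCA94}). One then shows $\mathcal{M}_d([S\,|\,G])=\mathcal{M}_d(M)\cap\R[s_{ij},g_{k\ell}]$: the inclusion ``$\subseteq$'' is trivial; for ``$\supseteq$'', pick a term order eliminating the $t_{ij}$ and observe that, because the $d$-minors of $M$ form a Gr\"obner basis, the elimination theorem says the intersection is generated by those $d$-minors not involving any $t_{ij}$, and those are exactly the $d$-minors of $[S\,|\,G]$ (a $d$-minor of $M$ avoids $t$ if and only if its chosen rows or its chosen columns all lie in the first $n$, and the two resulting submatrices coincide up to transpose because $S$ is symmetric). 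Primality then follows because the contraction of a prime ideal to a subring is prime. I would replace the factorization and $1$-genericity threads with this symmetric-completion-plus-elimination argument; it is shorter and actually addresses why adjoining a generic block preserves primality.
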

\begin{proof}
    We consider a generic symmetric matrix $T$ in new indeterminates $t_{ij}$, 
    so that the new block matrix
    \[M = \begin{bmatrix} 
    S & G \\ 
    G^{\top} & T 
    \end{bmatrix}\]
    is symmetric.
    Then we have 
    \[\mathcal{M}_d([S|G]) = \mathcal{M}_d(M) \cap \mathbb{R}[s_{ij},g_{ij}].\]
    Indeed, for the one direction 
    we observe that all minors of the smaller matrix $[S|G]$ are also minors of the larger matrix, 
    and all the monomials appearing involve only $s_{ij}$ and $g_{ij}$.
    For the other direction, we consider any lexicographic eliminating order for the variables $t_{ij}$, 
    i.e., an order such that any monomial involving at least one of the variables $t_{ij}$ is larger than any monomials not involving any $t_{ij}$.
    By \cite[Theorem 2.8]{CONCA94}, the $d$-minors of $M$ are a Gröbner basis for the corresponding ideal,
    so intersecting with $\mathbb{R}[s_{ij},g_{ij}]$
    gives a Gröbner basis for the elimination ideal.

    The larger ideal $ \mathcal{M}_d(M)$ is prime as the ideal of minors of a general symmetric matrix.
    Therefore, the elimination ideal $\mathcal{M}_d([S|G])$ is also prime.
\end{proof}

\subsection{Proof of Proposition~\ref{prop:minorideals}}
\label{apx:minorideals}
\begin{proof}
    In this proof we denote by $\{{\dot{l}_{d_1}} \}$
    the set $[d_1]\setminus \{l\}$,
    by $S_d(i_1,\dots, i_d)$ all permutations of $i_1, \dots, i_d$ seen as a multiset
    (i.e., there are $d!$ of them),
    and by $\mathcal{B}(\{{\dot{l}_{d_1}} \},(i_1,\dots, i_{d_1-1}))$ all bijections between the two sets.

    The entry of $\phi(\mathbf{M})$ in position indexed by $(\e_{i_1},\dots, \e_{i_{d_1-1}})$ and $\e_j$ is
    \begin{equation*}
        \begin{split}
            & \sum_{\pi \in S_{d_1}({i_1},\dots, {i_{d_1-1}}, {i_j})} \prod_{s=1}^{d_1-1} a_{s,\pi(i_s)}a_{d_1, \pi(i_{j})} 
            = \sum_{\pi \in S_{d_1}({i_1},\dots, {i_{d_1-1}}, {i_j})} \prod_{s=1}^{d_1-1} a_{\pi^{-1}(i_s),i_s}a_{\pi^{-1}(i_j), i_{j}} \\
            &=\sum_{\pi \in \mathcal{B}(\{{\dot{l}_{d_1}} \},(i_1,\dots, i_{d_1-1}))} \prod_{s=1}^{d_1-1} a_{\pi^{-1}(i_s),i_s}\sum_{l=1}^{d_1}a_{l, i_{j}} 
            =\sum_{l=1}^{d_1}\sum_{\pi \in \mathcal{B}(\{{\dot{l}_{d_1}} \},(i_1,\dots, i_{d_1-1}))} \prod_{s=1}^{d_1-1} a_{\pi^{-1}(i_s),i_s}a_{l, i_{j}}
        \end{split}
    \end{equation*}
    and so this part of the matrix factors as a matrix $Z$ whose entry in position indexed by $(\e_{i_1},\dots, \e_{i_{d_1-1}})$ as above and $l \in [d_1]$ is 
    \[\sum_{\pi \in \mathcal{B}(\{{\dot{l}_{d_1}} \},(i_1,\dots, i_{d_1-1}))} \prod_{s=1}^{d_1-1} a_{\pi^{-1}(i_s),i_s}\]
    and the matrix $W_1$.

    Similarly for an entry in position indexed by 
    $(\e_{i_1},\dots, \e_{i_{d_1-1}})$ and $k$ we get
    \begin{equation*}
        \sum_{\pi \in \mathcal{B}(\{{\dot{l}_{d_1}} \},(i_1,\dots, i_{d_1-1}))} \prod_{s=1}^{d_1-1} a_{\pi^{-1}(i_s),i_s}\sum_{l=1}^{d_1}b_{k,l} 
        =\sum_{l=1}^{d_1}\sum_{\pi \in \mathcal{B}(\{{\dot{l}_{d_1}} \},(i_1,\dots, i_{d_1-1}))} \prod_{s=1}^{d_1-1} a_{\pi^{-1}(i_s),i_s}b_{k,l}.
    \end{equation*}
    By concatenating the two parts of the matrix $\phi(\mathbf{M})$ we obtain  $\phi(\mathbf{M}) = Z[W_2^\top|W_1]$.
\end{proof}


\subsection{Proof of Proposition~\ref{prop:closed-form-binary-deep-network}}
\label{apx:closed-form-binary-deep-network}
\begin{proof}
    According to the recursive formula of $p^{(i)}$ and $q^{(i)}$ in Theorem~\ref{theorem:neural-network-closed-form-general}, we can rewrite the vector $p^{(i+1)}$ as
    \[
        \begin{bmatrix}
            p^{(i+1)}_1\\
            p^{(i+1)}_2
        \end{bmatrix}=
        \begin{bmatrix}
            w_{i+1,1,1}p^{(i)}_2+w_{i+1,1,2}p^{(i)}_1\\
            w_{i+1,2,1}p^{(i)}_2+w_{i+1,2,2}p^{(i)}_1
        \end{bmatrix}=W_{i+1}P_{12}\begin{bmatrix}
            p^{(i)}_1\\
            p^{(i)}_2
        \end{bmatrix}.
    \]
    Then, if we continue expanding, we will arrive at
    \begin{equation}
    \label{eq:binary-recursive-step}
        p^{(i+1)}=\begin{bmatrix}
            p^{(i+1)}_1\\
            p^{(i+1)}_2
        \end{bmatrix}=(W_{i+1}P_{12}\dots W_2P_{12}W_1)\xx.
    \end{equation}
    For the $q^{(i+1)}$ observe that 
    \[
        q^{(i+1)}=p^{(i)}_1p^{(i)}_2=(w_{i,1,1}p^{(i-1)}_2+w_{i,1,2}p^{(i-1)}_1)(w_{i,2,1}p^{(i-1)}_2+w_{i,2,2}p^{(i-1)}_1)
    \]
    where we can rewrite it as
    \[
        q^{(i+1)}=\begin{bmatrix}
            p^{(i-1)}_1 & p^{(i-1)}_2
        \end{bmatrix}^TW_{i}^TP_{12}W_i\begin{bmatrix}
            p^{(i-1)}_1\\ 
            p^{(i-1)}_2
        \end{bmatrix}
    \]
    where from~\eqref{eq:binary-recursive-step} the desired follows. Therfore, we can see that
    \[
        p^{(i)} = p_i\text{ and }q^{(i)}=q_i.
    \]
\end{proof}

\subsection{Proof of Lemma~\ref{lemma:binary-degrees}}
\label{apx:binary-degrees}
\begin{proof}
    If $L$ is even, then the degree $n(\dd)$ of the numerator is equal to
    \[
        (L/2-1)\cdot2 + 1 = L-1
    \]
    since $\deg(p_{i,L})=1$ and $\deg(q_k)=2$ for all $k=2,\dots,L$ with $\deg(q_1)=\deg(q_0)=0$.
    The degree $m(\dd)$ of the denominator is equal to
    \[
        (L/2)\cdot2 = L.
    \]
    In the same way, we can check then $L$ is odd, the degrees of $n(\dd)$ and $m(\dd)$ are equal to $L$ and $L-1$, respectively. From the other side, observe that 
    \[
        L+\delta(L)-1,\quad\text{ and }L-\delta(L)
    \]
    exactly gives $(L-1,L)$ if $L$ is even and $(L,L-1)$ if $L$ is odd where $\delta(L)=0$ if $L$ is even and $1$ if $L$ is odd. Therefore, $n(\dd)=L+\delta(L)-1$ and $m(\dd)=L-\delta(L)$.
\end{proof}

\subsection{Proof of Lemma~\ref{lemma:binary-params-dim}}
\label{apx:binary-params-dim}
\begin{proof}
    
    According to the formula of ambient dimension~\eqref{eq:dim-ambient-space-formula} and the degrees of the numerator and denominator from~\eqref{eq:coordinate-output-degree-of-BRNN}, we obtain
    \[
        M(L,d_L)=d_L\binom{2+n(\dd)-1}{1}+\binom{2+m(\dd)-1}{1}=d_L(1+n(\dd))+(1+m(\dd))=
    \]
    \[
        =d_L(L+\delta(L))+(L-\delta(L)+1).
    \]

    For a fixed $L>2$, we want to find an upper bound for the output dimension $d_L\geq1$ such that $N(L,d_L)\geq M(L,d_L)$ where $N(L,d_L)= 4(L-1) + 2d_L$. Direct computation shows
    \[
        4L - 4 + 2d_L \geq d_L(L + \delta(L)) + L - \delta(L) + 1,
    \]
    \[
        d_L \leq \frac{3L + \delta(L) - 5}{L + \delta(L) - 2} = 3 + \frac{1 - 2\delta(L)}{L + \delta(L) - 2}.
    \]
    Observe that $0<\frac{1 - 2\delta(L)}{L + \delta(L) - 2}<1$ for all even $L$ and $-1<\frac{1 - 2\delta(L)}{L + \delta(L) - 2}<0$ for all odd $L$. Therefore, $d_L\in\{1,2,3\}$ if $L$ is even and $d_L\in\{1,2\}$ if $L$ is odd.
\end{proof}

\subsection{Proof of Proposition~\ref{prop:binary-2..21-architecture}}
\label{apx:binary-2..21-architecture}
\begin{proof}
    Let $n(L)\coloneqq\deg P_{1,\ww}$ and $m(L)\coloneqq\deg Q_{\ww}$ be the degrees of the numerator and denominator, respectively, which depend on the number of layers $L$. Define $V_P\subset S^{n(L)}(\C^2)$ and $V_Q\subset S^{m(L)}(\C^2)$ to be the discriminant hypersurfaces (i.e., the vanishing locus of forms with a repeated linear factor), and set
    \[
        U \;\coloneqq\; \big(S^{n(L)}(\C^2)\setminus V_P\big)\times\big(S^{m(L)}(\C^2)\setminus V_Q\big).
    \]
    We claim that $U\subset \mathcal{M}_{\dd,\sigma}$, which implies
    \[
        \mathcal{V}_{\dd,\sigma}=S^{n(L)}(\C^2)\times S^{m(L)}(\C^2),
    \]
    since the neurovariety is the Zariski closure of the neuromanifold $\mathcal{M}_{\dd,\sigma}$. For a given $(P,Q)\in U$, our goal is to reconstruct the parameters $\ww=(W_1,W_2,\dots,W_L)$ such that $P_{1,\ww}=P$ and $Q_{\ww}=Q$. We will prove this by induction on the number of layers $L$. 
    
    \textbf{Base case $L=2$.}  
    Consider a neural network with architecture $\dd=(2,2,1)$. Its output numerator and denominator take the form
    \[
        P_{1,\ww}(\xx) \;=\; W_2 P_{12} W_1 \xx, 
        \qquad 
        Q_{\ww}(\xx) \;=\; \xx^T W_1^T P_{12} W_1 \xx.
    \]
    Fix $(P,Q)\in U$ with coefficients
    \[
        P(\xx) \;=\; 
        \begin{bmatrix} C_{1} & C_{2} \end{bmatrix}
        \begin{bmatrix} x \\ y \end{bmatrix}
        \;=\; C_P \xx,
    \]
    \[
        Q(\xx) \;=\; C_{11}x^2 + 2 C_{12}xy + C_{22}y^2 
        \;=\; 
        \begin{bmatrix} x & y \end{bmatrix}
        \begin{bmatrix} C_{11} & C_{12}\\ C_{12} & C_{22} \end{bmatrix}
        \begin{bmatrix} x \\ y \end{bmatrix}
        \;=\; \xx^TC_Q \xx.
    \]
    In other words, we must solve the system
    \[
        W_2 P_{12} W_1 \;=\; C_P,
        \qquad 
        W_1^T P_{12} W_1 \;=\; C_Q
    \]
    for the unknowns $W_1$ and $W_2$. Observe that once $W_1$ is reconstructed, $W_2$ follows immediately as
    \[
        W_2 \;=\; C_P W_1^{-1} P_{12}.
    \]
    Thus it suffices to determine $W_1$.  
    Without loss of generality, assume $C_{11}\neq 0$. Then the quadratic form $Q$ admits the factorization
    \[
        Q(x,y) \;=\; C_{11}\Big(x - \tfrac{-C_{12} - \sqrt{C_{12}^2 - C_{11}C_{22}}}{C_{11}}\, y\Big)
                        \Big(x - \tfrac{-C_{12} + \sqrt{C_{12}^2 - C_{11}C_{22}}}{C_{11}}\, y\Big).
    \]
    Hence one possible choice of $W_1$ is
    \[
        W_1 \;=\;
        \begin{bmatrix}
            C_{11} & C_{12} + \sqrt{C_{12}^2 - C_{11}C_{22}} \\
            1 & (C_{12} - \sqrt{C_{12}^2 - C_{11}C_{22}})/C_{11}
        \end{bmatrix}.
    \]
    Therefore, $W_1$ and subsequently $W_2$ can be reconstructed, completing the base case $L=2$.
    
    \textbf{Induction step.} 
    Now, let us proceed with the induction step. Assume that we can reconstruct the weights of any network with $L-1$ layers, and we want to show that we can reconstruct the network with $L$ layers. The idea is that for a given pair $(P,Q)$ corresponding to the architecture with $L$ layers, we construct polynomials $(P',Q')$ corresponding to the architecture with $L-1$ layers by first reconstructing the weights $W_1$.  
    
    Let us reconstruct $W_1$. Observe that the quadratic form $q_2$ corresponds to $W_1$, since
    \[
        q_2(\xx) \;=\; \xx^T W_1^T P_{12} W_1 \xx.
    \]
    If $L$ is even, then $q_2$ is a factor of $Q_{\ww}$, while if $L$ is odd, then $q_2$ is a factor of $P_{1,\ww}$.
    
    Without loss of generality, let $L$ be even. Over $\C$, any binary form splits into a product of linear factors, and since we removed the discriminant hypersurfaces, all linear factors of $P$ and $Q$ are pairwise non-proportional. Pick two distinct linear factors of $Q$, say $\tilde\ell_1,\tilde\ell_2$. Since they are not proportional, they determine an invertible matrix $W_1$ by taking its rows to be the coefficient vectors of $\tilde\ell_1$ and $\tilde\ell_2$.  
    
    Define
    \[
        P^{\prime}(\xx)\coloneqq P(W_1^{-1}\xx), 
        \qquad
        Q^{\prime}(\xx)\coloneqq \frac{Q(W_1^{-1}\xx)}{x_1x_2},
    \]
    where $\deg Q^{\prime}=m(L)-2=m(L-1)$ and $\deg P^{\prime}=n(L-1)$. Indeed, since $L$ is even, we have $n(L)=L-1$ and $m(L)=L$ by Lemma~\ref{lemma:binary-degrees}, while $n(L-1)=L-1$ and $m(L-1)=L-2$, exactly matching the degrees of $(P^{\prime},Q^{\prime})$ when $L-1$ is odd.  
    
    Thus, we have produced polynomials $P^{\prime}$ and $Q^{\prime}$ lying in the ambient space corresponding to a binary neural network with $L-1$ layers. By the induction hypothesis, we can reconstruct its weights and obtain $\ww^{\prime}=(W_2^{\prime},\dots,W_L^{\prime})$.  
    
    Finally, to recover the original weights of the network, observe that
    \[
        P(\xx) = P^{\prime}(W_1\xx) = P^{\prime}_{1,\ww^{\prime}}(W_1\xx)=P_{1,\ww}(\xx), 
        \qquad 
        Q(\xx) = q_2(\xx)\, Q^{\prime}(W_1\xx) = q_2(\xx)\, Q^{\prime}_{\ww^{\prime}}(W_1\xx)=Q_{\ww}(\xx).
    \]
    Hence the full parameter tuple is
    \[
        \ww = (W_1, W_2^{\prime}, \dots, W_L^{\prime}).
    \]
\end{proof}

\subsection{Proof of Proposition~\ref{prop:binary-not-dense-arch}}
\label{apx:binary-not-dense-arch}
\begin{proof}
    Note that the numerators $P_1,\ldots, P_k$ have the form~\eqref{eq:RNN-coordinates-formulas-binary}, so they share all the same factors except for one linear form. This poses a strong restriction on their coefficients giving rise to many polynomial constraints. For example, the resultant of any $P_i$ and $P_j$ is 0, for $i\neq j$, meaning that the neuromanifold is cut out by at least $\binom{k}2$ polynomial equations.
\end{proof}

\subsection{Proof of Lemma~\ref{lemma:dim-generic-fiber}}
\label{apx:dim-generic-fiber}
\begin{proof}
    Similarly to Lemma~\ref{lemma:fiber-symmetries-general}, if we do not cancel out all the entries of the diagonal matrices $D_i$ when we apply transformation of the parameters to the layer $(d_i,d_{i+1})$, then all output homogeneous polynomials will be rescaled by a product of all diagonal elements of $D_i$ when we go through all layers, which we denote by $\bar{\lambda}$, i.e.,
    \[
        (\bar\lambda P_1,\dots,\bar\lambda P_{d_L},\bar\lambda Q).
    \]
    But, to keep the combinatorial parameter map invariant under this transformation, we need to set $\bar\lambda=1$ which decreases the dimension of a generic fiber by $1$.
\end{proof}

\end{document}